\numberwithin{equation}{section}
\newtheorem{theorem}{Theorem}[section]
\newtheorem{proposition}{Proposition}[section]
\newtheorem{lemma}{Lemma}[section]
\newtheorem{remark}{Remark}[section]
\newtheorem{definition}{Definition}[section]
\newtheorem{corollary}{Corollary}[section]
\newtheorem{conjecture}{Conjecture}[section]
\renewcommand{\epsilon}{\varepsilon}
\newcommand{\abs}[1]{\left\vert #1\right\vert}
\newcommand{\1}[1]{{\mathbf 1}{\{#1\}}}
\newcommand{\R}{\mathbb{R}}
\newcommand{\Z}{\mathbb{Z}}
\newcommand{\PR}{\mathbb{P}}
\newcommand{\ES}{\mathbb{E}}
\title[]{Local trapping for elliptic random walks in random environments in $\Z^d$}
\date{}
\author[A.~Fribergh]{Alexander Fribergh}
\address{Alexander Fribergh\\Universit\'e de Montr\'eal, DMS\\
Pavillon Andr\'e-Aisenstadt\\     2920, chemin de la Tour Montréal (Qu\'ebec),  H3T 1J4} \email{fribergh@dms.umontreal.ca}
\author[D.~Kious]{Daniel Kious}
\address{Daniel Kious\\Ecole Polytechnique F\'ed\'erale de Lausanne\\ EPFL SB MATHAA PRST\\
MA B1 537, Station 8\\
CH-1015 Lausanne, Switzerland} \email{daniel.kious@epfl.ch}
\keywords{Random walk in random environments,  Ballisticity, Ellipticity} \subjclass[2000]{primary 60K37;
secondary 82D30}
\begin{document}

\begin{abstract}
 We consider elliptic random walks in i.i.d.~random environments on $\Z^d$. The main goal of this paper is to study under which ellipticity conditions local trapping occurs. Our main result is to exhibit an ellipticity criterion for ballistic behavior which extends previously known results. We also show that if the annealed expected exit time of a unit hypercube is infinite then the walk has zero asymptotic velocity.  
  \end{abstract}

\maketitle

\section{Introduction}

In this paper, we consider random walks in i.i.d.~random environments on $\Z^d$ for $d\geq 2$, in the specific case where the walk is directionally transient. It is expected that if the transition probabilities are uniformly elliptic then the walk is ballistic (see~\cite{Topics} and~\cite{Zeitouni}). This conjecture has been proved under stronger transience assumptions known as Sznitman's conditions $(T)$, $(T')$ or $(T)_{\gamma}$ (see~\cite{Szn01} and~\cite{Szneff}) and more recently condition $(P)_M$ (see~\cite{BDR}). All those transience conditions are believed to be equivalent under uniform ellipticity (see~\cite{Topics} and~\cite{Zeitouni}). Proving this equivalence is one of the major open problems in random walk in random environments (RWRE). We will give more details on these results in Section~\ref{sect_questions}.

If we remove the uniform ellipticity assumption, the walk may become sub-ballistic even in the elliptic setting (see~\cite{SabotTournier}, \cite{Sabot2}, \cite{Sabot1} and~\cite{Bouchet13}). This naturally raises the following question: which ellipticity conditions characterize a ballistic behavior? 

Recently, new ellipticity criteria for ballistic behavior have been proved (in~\cite{CR} and~\cite{BRS}). In this paper, we find a criterion (see Theorem~\ref{main}) for positive speed which extends previously known results. We believe that this criterion is close to optimal and we use it to exhibit new examples of ballistic random walks (see Proposition~\ref{new_example}). We also prove, under stronger assumptions, annealed and quenched central limit theorems (see Theorem \ref{cor_TCL}). Furthermore, we show that if the annealed expected exit time of a unit hypercube is infinite then the walk has zero asymptotic velocity (see Theorem~\ref{lb_tail}). We think that this criterion actually characterizes the zero-speed regime.

\subsection{Definition of the model}

Let us now define the model more precisely. Call $U$ the set of $2d$ canonical unit vectors and let
$$\mathcal{P}=\{(p(e))_{e\in U}:\ p(e)\geq 0\text{ for all } e\in U\text{, and }\ \sum_{e\in U} p(e)=1\}.$$
Fix some unit vector $\ell \in S^{d-1}$ and let us enumerate $U$ in the following manner: denote $\nu=\{e_1,\ldots,e_{d}\}$ an orthonormal basis of $\Z^d$ such that $e_1\cdot \ell \geq e_2 \cdot \ell \geq \cdots \geq e_d\cdot \ell \geq 0$ and set $e_{i+d}=-e_i$ for $i\in [1,d]$. In particular, Pythagoras's theorem implies that
\begin{equation}\label{e1ell}
e_1\cdot \ell \geq \frac 1{\sqrt d}.
\end{equation}

An environment $\omega$ is an element of $\Omega=\mathcal{P}^{\Z^d}$, which we view as a collection of transition probabilities $p^{\omega}(x,\cdot)=(p^{\omega}(x,e))_{e\in U}$ assigned to every vertex $x\in \Z^d$.

The random walk in the environment $\omega$ started from $x$ is the Markov chain $(X_n)_{n \geq 0}$ in $\Z^d$ with the law $P_x^{\omega}$ defined by $P_x^{\omega}[X_0=x]=1$ and 
\[
P_x^{\omega}[X_{n+1}=x+e \mid X_n=x]=p^{\omega}(x,e),
\]
for any $x\in \Z^d$ and $e\in U$. The law $P_x^{\omega}$ is commonly referred to as the \emph{quenched law}.

Let us consider ${\bf P}$ a probability measure on the environment space $\Omega$ which is a product measure, meaning that all random variables $p^{\omega}(x,\cdot)$ for $x\in \Z^d$ are i.i.d.~under ${\bf P}$. This allows us to define the averaged or \emph{annealed law} of the RWRE started at $x$ by defining $\PR_x=\int P_x^{\omega}d{\bf P}$. In the case where $x=0$, we will abbreviate $P_x^{\omega}$ and $\PR_x$ by $P^{\omega}$ and $\PR$ respectively.

We say that the environment  is elliptic if it verifies the following hypothesis $(E)$
\begin{equation}\label{hyp_E}
\text{for any $x\in \Z^d$ and $e\in U$ we have }   p^{\omega}(x,e)>0,\ \omega\text{-}{\bf P}\text{-a.s.}
\end{equation}
and we call an environment uniformly elliptic if there exists $\kappa>0$ such that
\begin{equation}\label{hyp_UE}
\text{for any $x\in \Z^d$ and $e\in U$ we have }   p^{\omega}(x,e)>\kappa,\ \omega\text{-}{\bf P}\text{-a.s.},
\end{equation}
a condition commonly denoted $(UE)$.

Given $\ell \in S^{d-1}$, we say that a RWRE is transient in the direction $\ell$ if $\PR[A_{\ell}]=1$ where $A_{\ell}=\{\displaystyle{\lim_{n \to \infty}} X_n \cdot \ell=\infty \}$.

We say that a RWRE is ballistic in the direction $\ell$ if
\[
\liminf_{n\to \infty} \frac{X_n \cdot \ell} n >0, \qquad \PR-\text{a.s.}
\]

\subsection{Former results and open questions}\label{sect_questions}

The case of RWRE on $\Z$ is well understood. In~\cite{Sol75}, the author identifies conditions that characterize recurrence versus (directional) transience, as well as zero-speed versus positive speed regimes. In particular, a regime of directional transience with zero-speed is exhibited. The existence of this regime is due to the existence of traps slowing down the walk down (see~\cite{ESZ} for details on trapping in RWREs on $\Z$). These traps can be formed even when transitions probabilities are uniformly elliptic.

In $\Z^d$, for $d\geq2$, it is more difficult to create traps. Actually, one of the main open problems concerning random walks in random environments is the following conjecture (see~\cite{Topics} and~\cite{Zeitouni}).
\begin{conjecture}\label{conjecture}
For any $\ell \in S^{d-1}$, we consider a random walk in a uniformly elliptic i.i.d.~environment in $\Z^d$ for $d\geq 2$. If it is transient in the direction $\ell$, then it is ballistic in the direction $\ell$.
\end{conjecture}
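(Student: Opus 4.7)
I should note upfront that the statement in question is Conjecture 1.1, which the paper itself describes as one of the major open problems in RWRE; so the proposal below is not a solution but an outline of the approach I would try, together with the point where it stalls.

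The plan is to go through the regeneration structure of Sznitman--Zerner. For a walk that is $\PR$-a.s.\ transient in direction $\ell$, one constructs the regeneration times $(\tau_k)_{k\geq 1}$: stopping times at which $X_{\tau_k}\cdot\ell$ is a new maximum and the future trajectory never returns to the half-space $\{y:y\cdot\ell<X_{\tau_k}\cdot\ell\}$. Under $\PR$, the increments $(X_{\tau_{k+1}}-X_{\tau_k},\tau_{k+1}-\tau_k)$ are i.i.d.\ (from $k\geq 1$), and a standard law of large numbers argument reduces ballisticity in direction $\ell$ to the finiteness of $\ES[\tau_2-\tau_1]$. So the target becomes: under $(UE)$ and directional transience, $\ES[\tau_2-\tau_1]<\infty$.

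The second step would be to go through one of the ballisticity conditions $(T)$, $(T')$, $(T)_\gamma$, or $(P)_M$ discussed in the introduction, all of which are already known to imply ballisticity (and actually a strong form of it). So the real content is to show that under $(UE)$, directional transience in $\ell$ implies, say, $(T')$ in $\ell$. The natural tool is Sznitman's renormalization: one defines box-exit events through slabs of width $L$ transverse to $\ell$, and tries to show that the probability of exiting a box of scale $L$ through the ``correct'' face is $1-\exp(-L^c)$ for some $c>0$. The renormalization scheme allows such estimates at a base scale $L_0$ to be bootstrapped to all scales, provided the base estimate is strong enough.

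The hard step, and the one where this becomes the open conjecture, is producing the base estimate from pure directional transience together with $(UE)$. Directional transience only guarantees that the exit probability through the correct face tends to $1$ on increasing scales, with no quantitative rate; uniform ellipticity gives uniform lower bounds on single-step probabilities but does not on its own prevent polynomially-thick ``trap-like'' configurations from dominating box exits. Bridging this gap is exactly what the known partial results ($(T)_\gamma$, $(P)_M$, and the results of \cite{CR, BRS} invoked later in the paper) chip away at, each time by identifying a quantitative substitute for directional transience that is strong enough to start the renormalization but weak enough to be verifiable. Absent such a substitute, my proposal cannot be closed, and I would instead aim for the paper's actual results: an ellipticity-type criterion (Theorem~\ref{main}) that implies the required base estimate, and a matching obstruction (Theorem~\ref{lb_tail}) showing that if annealed unit-cube exit times have infinite mean then the renormalization must fail and zero speed occurs.
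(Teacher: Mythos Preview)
Your assessment is entirely correct: the statement is Conjecture~\ref{conjecture}, and the paper provides no proof of it. It is presented explicitly as ``one of the major open problems'' in random walk in random environment, and the paper's contributions (Theorems~\ref{main} and~\ref{lb_tail}) are partial results around it, not a resolution. So there is no proof in the paper to compare your proposal against.

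That said, your outline of the standard approach is accurate and your identification of the sticking point is the right one. The regeneration structure reduces ballisticity to $\ES[\tau_1\mid 0\text{-regen}]<\infty$ (this is Theorem~\ref{speed_regen} in the paper), and the known machinery ($(T)_\gamma$, $(P)_M$, the renormalization of Sznitman) does deliver this once a quantitative base estimate is available. The gap you name---that bare directional transience under $(UE)$ gives only qualitative convergence of slab-exit probabilities, with no rate sufficient to seed the renormalization---is precisely the open content of the conjecture. Your concluding redirection toward the paper's actual results is also accurate: Theorem~\ref{main} replaces the unavailable base estimate by the ellipticity criterion $(K)_1$ together with $(P)_M$, and Theorem~\ref{lb_tail} shows that infinite annealed unit-hypercube exit time forces zero speed, giving the obstruction side.

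In short: there is no missing idea in your proposal beyond the one you already flag, and that missing idea is the conjecture itself.
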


Let us discuss this conjecture on a very basic level. We can notice that there are two  main hypotheses in this conjecture.
\begin{enumerate}
\item The directional transience, which is a  ``global'' hypothesis on the transition probabilities. It gives information on how the walk explores the space.
\item The uniform ellipticity, which is a \lq\lq local\rq\rq~property for the transition probabilities. It provides us with a sufficient condition to avoid that the walk gets trapped in a small part of the environment.
\end{enumerate}

The main difficulty in proving Conjecture~\ref{conjecture} is to understand how the walk explores the space. Roughly speaking, we need to show that directional transience implies that the walk goes relatively directly in the direction $\ell$, i.e.~without zig-zagging on large scales. This, coupled with the fact that the walk cannot be trapped locally (because of uniform ellipticity) should imply that the walk is ballistic.

Surprisingly, it turns out to be technically difficult to show that a directional transient walk goes fairly directly in the direction $\ell$.   Conjecture~\ref{conjecture} has only been proved under stronger transience assumptions, under which we are given quantitative estimates on the exit probabilities of large slabs. Let us now introduce one of these conditions known as Sznitman's $(T)_{\gamma}^{\ell}$ (see~\cite{Szn01}).

For any set of vertices $A\subset \Z^d$,  we introduce the exit time of the set $A$ as
\[
 T_A^{\text{ex}}=\inf\{n\geq 0;~X_n\notin A\}.
\]

For any $\ell \in S^{d-1}$ and for any $b>0$, we define the slab
\[
U_b^{\ell}(L)=\{x\in \Z^d,\ -bL\leq x\cdot \ell \leq L\}.
\]

Set $\ell\in S^{d-1}$, $\gamma\in (0,1)$ and $b>0$, we say that the walk verifies the condition $(T)_{\gamma}^{\ell}$ if there exists a neighborhood $V\subset S^{d-1}$ of $\ell$ such that for all $\ell'\in V$, we have 
\begin{equation}\label{hyp_T}
\limsup_{L\to \infty} \frac 1 {L^{\gamma}} \ln \PR\bigl[X_{T_{U_b^{\ell'}(L)}^{\text{ex}}} \cdot \ell' <0\bigr] <0.
\end{equation}

Loosely speaking, this means that the probability of exiting a large slab against the asymptotic direction of the walk decays like a stretched exponential of exponent $\gamma$ (in the size of the slab).

Condition $(T)^{\ell}$ corresponds to condition $(T)_{\gamma}^{\ell}$ in the case where $\gamma=1$. Condition $(T')^{\ell}$ is defined as the fulfillment of condition $(T)_{\gamma}^{\ell}$ for all $\gamma\in (0,1)$. It was proved in~\cite{Szneff} that a random walk in i.i.d.~uniformly elliptic environment satisfying $(T')^{\ell}$ is ballistic in the direction $\ell$. It was also shown (see~\cite{Szneff}) that if $\gamma\in (1/2,1)$ then $(T)_{\gamma}^{\ell}$ implies $(T')$. 

Subsequent works (\cite{DR1},\cite{DR2} and~\cite{BDR}) have weakened the transience conditions that we can verify to prove ballistic behavior under uniform ellipticity. At this point in time, the state of the art is a result from~\cite{BDR} called polynomial condition typically denoted $(P)_M$. 

 To define this condition, let us consider for each $L$, $L'$, $\widetilde{L}>0$ and $\ell \in S^{d-1}$ the box
\[
B^{\ell}_{L,L',\widetilde{L}}=R\Bigl((-L',L)\times(-\widetilde{L},\widetilde{L})^{d-1}\Bigr) \cap \Z^d,
\]
where $R$ is the rotation of $\R^d$ with center $0$ which sends $e_1$ onto $\ell$. For $M\geq 1$ and $\ell\in S^{d-1}$, we say that the walk verifies condition $(P)_M^{\ell}$ if for all $L\geq \frac 23 3^{29d}$, there exist $L'\leq \frac 54 L$ and $\widetilde{L}\leq 72 L^3$ such that 
\begin{equation}\label{def_pm}
\PR[X_{T^{\text{ex}}_{B^{\ell}_{L,L',\widetilde{L}}}} \cdot \ell <L] \leq \frac 1{L^M}.
\end{equation}

This condition can be verified in a finite box, that is why it is referred to as an effective criterion. It should be noted that this condition obviously follows from tail estimates on the exit probabilities appearing in~(\ref{hyp_T}).

The main result of~\cite{BDR} is that for a RWRE in i.i.d.~environment with uniformly elliptic transition probabilities then $(P)_M$ for $M\geq 15d+5$ implies $(T')$. In particular, this implies ballisticity.

As we can see there has been a great deal of effort to understand under which transience assumptions we are able to prove ballistic behavior. But it is only recently that there have been developments on RWREs that are not uniformly elliptic. 

It is known (\cite{SabotTournier}, \cite{Sabot2}, \cite{Sabot1} and~\cite{Bouchet13}) that, in dimension $d\geq 2$, there exist elliptic random walks which are directionally transient but are not ballistic. More recently it has been shown, in~\cite{CR}, that under certain ellipticity conditions the polynomial condition $(P)_M$ is equivalent to $(T)'$. To be more specific, consider a RWRE in an elliptic i.i.d.~environment, we say that it verifies condition $(E)_0$ if 
\begin{equation}\label{def_e}
\text{for all $e\in U$ there exists $\eta_e>0$ such that }{\bf E}[(p^{\omega}(0,e))^{-\eta_e}]<\infty.
\end{equation}

One of the main results (Theorem 1.1) of~\cite{CR} is that if a random walk in an elliptic i.i.d.~environment  verifies $(P)_M^{\ell}$ for some $M\geq 15d+5$ and $(E)_0$ then this RWRE verifies $(T')^{\ell}$. We give an exact statement of this result in Theorem \ref{PequivT}. Furthermore, the authors of~\cite{CR} introduce sufficient ellipticity conditions for ballistic behavior under condition $(P)_M$. Later on, the ellipticity conditions for ballistic behavior were improved in~\cite{BRS}, providing an optimal criterion for the case of Dirichlet environments. See Section~\ref{sect_crit_pow} for details on this ellipticity condition.

In order to understand which ellipticity criteria characterize ballistic behavior we need to understand exactly how local traps are created. This is the main focus of this paper. After investigating how traps are created, it is our belief that a walk is ballistic if, and only if, the expected annealed exit time of a unit hypercube is finite. In order to back up our belief we prove the following:
\begin{enumerate}
\item if the annealed exit time of a unit hypercube is infinite then the walk has zero asymptotic velocity (see Theorem~\ref{lb_tail}),
\item we give a criterion for positive speed (see Theorem~\ref{main}). In order to verify this criterion it is sufficient to prove that we can exit some particular unit hypercube containing the origin. As we explain in Section~\ref{sect_why}, we believe that this criterion essentially means that the exit time of a unit hypercube has finite annealed expectation.
\end{enumerate}

For the aforementioned reasons, we believe that our positive speed criterion is near optimal. 

One of the contribution of this work is to bring forth the idea that the smallest possible traps are contained in unit hypercubes. This is striking since, in the reversible context, it is known (see~\cite{Fri11}) that if a walk is sub-ballistic then it can get trapped on just one edge. In Proposition~\ref{ex_nul}, we provide an example of a sub-ballistic RWRE than cannot  stay long on only one edge.

\subsection{Plan of the article}

Let us present how this paper is structured.

In the next section (Section~\ref{sect_not}), we will start by introducing some basic notations as well as facts about regeneration times. This is a central tool for determining whether or not a walk is ballistic.

 After that, in Section~\ref{sect_results}, we will present our zero speed criterion (see Theorem~\ref{lb_tail}) and our positive speed criterion (see Theorem~\ref{main}). We also state annealed and quenched central limit theorems (see Theorem \ref{cor_TCL}). 
 
 Before moving on to proofs, we discuss the intuition behind our main results in Section~\ref{sect_discussion}. In this section we try to justify why our criterion is close to optimal. We also provide a new example of ballistic walks (see Proposition~\ref{new_example}) and a zero-speed random walk that can never stay long on only one edge (see  Proposition~\ref{ex_nul}).

The proof for the sufficient condition for positive speed is presented in Section~\ref{sect_thm_proof}. This section is divided into three parts. The first one is Section~\ref{sect_att} in which we prove the key estimate Proposition~\ref{reachlog}: under our ballisticity criterion  the quenched probability of reaching a point far away is lower bounded. The second section is Section~\ref{sect_ref_res}, in which we recall some classical results from RWREs. Finally the third part is Section~\ref{sect_tailtau}, in which we finish the proof by providing an upper bound on the tail of the first regeneration time.\\

Finally we present the proof of the sufficient condition for zero speed in Section~\ref{sect_zero_speed}. This section is essentially independent of the rest of the paper.

Before moving on to the rest of the paper, let us specify that in the course of our proofs, $c$ and $C$ will typically denote constants in $(0,\infty)$ whose value may change from line to line.

\section{Basic notations and regeneration times}\label{sect_not}

In this section, we introduce some basic notations and we summarize the facts we need about regeneration times.

Let us define the adjacency $\sim$ such that, for $x,y\in\mathbb{Z}^d$, we have $x\sim y$ if and only if $\|x-y\|_1=1$. Given a set $V$ of vertices of $\Z^d$, we denote by $\abs{V}$ its cardinality, by $E(V)=\{ [x,y] :  x\sim y,\ x,y\in V\}$ its edges and 
\[
\partial V= \{x \notin V:~\exists y\in V,~x\sim y\},
\]
its border. For $A\subset \Z^d$ and $x\in A$, we denote
\[
\partial_x A=\{y\in \partial A:~x\sim y\},
\]
the neighbors of $x$ which are outside of $A$.

For any  $r>0$, we denote
\begin{equation}\label{def_HH}
\mathcal{H}^+(r)=\{z\in \Z^d, z\cdot \ell > r\}.
\end{equation}

For any set of vertices $A\subset \Z^d$,  we introduce the hitting times
\[
T_A=\inf\{n\geq 0;~X_n\in A\},~T_A^+=\inf\{n\geq 1;~X_n\in A \}.
\]

We will use a slight abuse of notation and write $x$ instead of $\{x\}$ when the set is a point $x$.

\subsection{Regeneration times}\label{sect_regen}

We set $a\in (2\sqrt d, 10 \sqrt d)$ and define
\[
D=\inf\{n\geq 0: X_n\cdot \vec{\ell}<X_0\cdot \vec{\ell}\},
\]
as well as the stopping times $S_k$, $k\geq 0$, $R_k$, $k\geq 1$, and the levels $M_k$, $k\geq 0$:
\begin{align*}
S_0&=0,~M_0=X_0\cdot \vec{\ell},\text{ and for $k\geq 0$}, \\
S_{k+1}&=T_{\mathcal{H}^+(M_k+a)}\leq \infty,~R_{k+1}=D\circ\theta_{S_{k+1}}+S_{k+1}\leq \infty, \\
M_{k+1}&=\sup_{n\leq R_{k+1}} X_n\cdot \vec{\ell}\leq \infty,
\end{align*}
where $\theta_{\cdot}$ is the shift operator.

Finally, we define the basic regeneration time
\[
\tau_1=S_K, \text{ with } K=\inf\{k \geq 1:~ S_k<\infty \text{ and } R_k=\infty\}.
\]

\begin{remark} The choice of $a\in(2\sqrt d,10 \sqrt d)$ is only necessary to prove the non degeneracy of the covariance matrix in Theorem~\ref{speed_regen}.\end{remark}

It follows from directional transience (see for example~\cite{SznZer}) that 
\begin{equation}\label{pos_D}
\PR[D=\infty]>0,
\end{equation}
this allows us to define
\begin{equation}
\label{def_prob_regen}
\PR[~\cdot \mid 0-\text{regen}]=\PR[~\cdot \mid D=\infty].
\end{equation}

Then let us define the sequence $\tau_0=0<\tau_1<\tau_2< \cdots <\tau_k< \cdots$ (these inequalities hold except if the regeneration times are infinite), via the following procedure:
\begin{equation}
\label{regen_struct}
\tau_{k+1}=\tau_1+\tau_k(X_{\tau_1+ \cdot }-X_{\tau_1},~\omega(\cdot+X_{\tau_1})),~k\geq 0.
\end{equation}

That is, the $(k+1)$-th regeneration time is the $k$-th such time after the first one.

The first main result is that the regeneration structure exists and is finite (see for example~\cite{SznZer}).
\begin{lemma}
\label{tau_finite}
Let us consider a RWRE in an elliptic i.i.d.~environment. Fix $\ell\in S^{d-1}$ and assume that the random walk is transient in the direction $\ell$. For any $k\geq 1$, we have ${\bf P}$-a.s., for all $x\in \Z^d$,
\[
\tau_k<\infty, \qquad \qquad P_x^{\omega}\text{-a.s.}
\]
\end{lemma}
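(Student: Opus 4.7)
The plan is to first establish the annealed finiteness $\PR[\tau_k<\infty]=1$ for every $k\geq 1$, and then transfer this to the quenched statement through a short Fubini argument combined with translation invariance of ${\bf P}$.

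For the annealed part, I would start from the observation that directional transience implies $X_n\cdot \ell\to+\infty$, $\PR$-a.s., so that each stopping time $S_k$ is automatically $\PR$-a.s.~finite as soon as $S_{k-1}<\infty$ (the walk must eventually reach every half-space $\mathcal{H}^+(r)$). The heart of the matter is then to show $K<\infty$, $\PR$-a.s. Using a strong Markov decomposition at the stopping times $S_k$ together with~(\ref{pos_D}), which gives $\PR[D=\infty]>0$, one shows that at each successive record level there is a uniformly positive annealed probability that the walk will never revisit a level below it. This yields that $K$ is stochastically dominated by a geometric random variable, and in particular $\tau_1=S_K<\infty$, $\PR$-a.s. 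The case of general $k$ then follows by induction, using the recursive definition~(\ref{regen_struct}): conditionally on $\{D=\infty\}$, the increment $\tau_{k+1}-\tau_k$ has the same law as $\tau_1$ under $\PR[\,\cdot\mid D=\infty]$, and hence is finite almost surely.

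Once the annealed statement is in hand, the quenched one follows by a soft argument. For any fixed $x\in\Z^d$ and any $k\geq 1$, translation invariance of the product measure ${\bf P}$ gives $\PR_x[\tau_k=\infty]=\PR[\tau_k=\infty]=0$, so that
\[
\int P_x^{\omega}[\tau_k=\infty]\,d{\bf P}(\omega)=0,
\]
which forces $P_x^{\omega}[\tau_k<\infty]=1$ for ${\bf P}$-a.e.~$\omega$. A countable intersection over $(x,k)\in\Z^d\times\N$ then produces a single ${\bf P}$-null set outside of which the conclusion of the lemma holds.

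The main obstacle is the strong Markov step used to bound $K$. The delicate point is that even though $X_{S_k}$ is a freshly reached record, nearby sites may already have been visited, so the environment at $X_{S_k}$ is not strictly independent of the past of the walk. The classical Sznitman--Zerner device is to enlarge the natural filtration to include the environment on all visited vertices, and then exploit the product structure of ${\bf P}$ on the complement together with the strong Markov property of the walk. Since this renewal construction is entirely standard and has been carried out in the literature in exactly this setting, I would in practice simply cite~\cite{SznZer} rather than reproduce the argument.
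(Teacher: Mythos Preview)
Your proposal is correct and matches the paper's treatment: the paper does not give its own proof of this lemma but simply cites~\cite{SznZer}, exactly as you conclude you would do. Your sketch of the annealed argument (record levels, geometric domination of $K$ via~(\ref{pos_D}), induction through~(\ref{regen_struct})) and the Fubini/translation-invariance passage to the quenched statement are the standard route, and your identification of the delicate point---that the environment near $X_{S_k}$ is not a priori fresh---is precisely the issue the Sznitman--Zerner filtration enlargement is designed to handle.
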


The fundamental renewal property is now stated (see for example~\cite{SznZer})
\begin{theorem}
\label{tau_indep}
Let us consider a RWRE in an elliptic i.i.d.~environment. Fix $\ell\in S^{d-1}$ and assume that  the random walk is transient in the direction $\ell$.

Under $\PR$, the processes $(X_{\tau_1\wedge\cdot}), (X_{(\tau_1+\cdot)\wedge \tau_2}-X_{\tau_1}),\cdots, (X_{(\tau_k+\cdot)\wedge \tau_{k+1}}-X_{\tau_k}),\ldots$ are independent and, except for the first one, are distributed as $(X_{\tau_1\wedge \cdot} )$ under $\PR[~\cdot \mid 0-\text{regen}]$.
\end{theorem}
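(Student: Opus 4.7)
The proof follows the classical Sznitman--Zerner regeneration scheme. The geometric cornerstone is that the trajectory decomposes cleanly at $\tau_1=S_K$: for every $n<\tau_1$ one has $X_n\cdot\ell\leq M_{K-1}+a<X_{\tau_1}\cdot\ell$, by the very definition of $S_K$ as the hitting time of $\mathcal{H}^+(M_{K-1}+a)$, whereas for every $n\geq\tau_1$ one has $X_n\cdot\ell\geq X_{\tau_1}\cdot\ell$, by $D\circ\theta_{\tau_1}=\infty$. Consequently, the transition probabilities driving the walk up to time $\tau_1$ sit at sites $\{y:y\cdot\ell<X_{\tau_1}\cdot\ell\}$ (the site $X_{\tau_1}$ itself is reached, but $p^\omega(X_{\tau_1},\cdot)$ is not queried before time $\tau_1$), while those driving the walk from $\tau_1$ onwards sit at sites $\{y:y\cdot\ell\geq X_{\tau_1}\cdot\ell\}$. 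These two collections of transition probabilities are disjoint, hence independent under the product measure $\mathbf{P}$.

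To convert this decoupling into the distributional identity, the main technical nuisance is that $\tau_1$ is not a stopping time, since the defining condition $R_K=\infty$ depends on the whole future. I would resolve this in the standard way by partitioning according to $K$. For events $A,B$ in trajectory space, write
\begin{align*}
&\PR[X_{\tau_1\wedge\cdot}\in A,\,X_{\tau_1+\cdot}-X_{\tau_1}\in B]\\
&\qquad=\sum_{k\geq 1}\sum_{x\in\Z^d}\PR\bigl[\mathcal{E}_{k-1},\,X_{S_k\wedge\cdot}\in A,\,X_{S_k}=x,\,D\circ\theta_{S_k}=\infty,\,X_{S_k+\cdot}-x\in B\bigr],
\end{align*}
where $\mathcal{E}_{k-1}=\{R_1<\infty,\ldots,R_{k-1}<\infty\}$ (with $\mathcal{E}_0$ the whole space) is measurable with respect to the walk up to $S_k$. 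Now $S_k$ is a genuine stopping time, so the strong Markov property under $P_0^\omega$ followed by integration against $\mathbf{P}$ rewrites the summand as
\begin{equation*}
\mathbf{E}\bigl[P_0^\omega[\mathcal{E}_{k-1},\,X_{S_k\wedge\cdot}\in A,\,X_{S_k}=x]\,\cdot\,P_x^\omega[D=\infty,\,X_\cdot-x\in B]\bigr].
\end{equation*}
By the geometric decoupling, the first quenched factor depends on $\omega$ only at sites $\{y\cdot\ell<x\cdot\ell\}$, while the second factor depends only on sites $\{y\cdot\ell\geq x\cdot\ell\}$; the i.i.d.\ product structure of $\mathbf{P}$ then splits the annealed expectation, and translation invariance rewrites the second annealed factor as $\PR[D=\infty,\,X_\cdot\in B]$. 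Resumming over $k$ and $x$ and dividing by $\PR[D=\infty]>0$ (cf.\ \eqref{pos_D}) yields the $k=1$ statement of the theorem, namely the independence of the first block from the post-$\tau_1$ shift together with the identification of the latter's law as $\PR[\,\cdot\,\mid 0\text{-regen}]$.

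The full theorem follows by induction on $k$, using the recursive definition $\tau_{k+1}=\tau_1+\tau_k\circ\theta_{\tau_1}$: by the first-block step and translation invariance of $\mathbf{P}$, the shifted pair $(X_{\tau_1+\cdot}-X_{\tau_1},\,\omega(\cdot+X_{\tau_1}))$ has the law of the original pair under $\PR[\,\cdot\,\mid 0\text{-regen}]$, and its own first regeneration time equals $\tau_2-\tau_1$; iterating the first-block argument inside this shifted picture delivers the independence and identical distribution of all blocks beyond the first. The main obstacle throughout is the careful bookkeeping of the first paragraph, namely verifying that the events $\{\mathcal{E}_{k-1},\,X_{S_k\wedge\cdot}\in A,\,X_{S_k}=x\}$ and $\{D=\infty,\,X_\cdot-x\in B\}$ genuinely live in disjoint coordinate $\sigma$-algebras of the environment; once this is settled, the remainder is an essentially mechanical invocation of the strong Markov property and of the product independence of $\mathbf{P}$.
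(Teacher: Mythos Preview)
Your proof sketch is correct and follows the classical Sznitman--Zerner argument faithfully. Note that the paper itself does not supply a proof of this statement: it merely records the result and refers the reader to \cite{SznZer}, so there is no in-paper proof to compare against; your outline is precisely the argument carried out in that reference.
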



The previous results we mention imply the following Theorem (see~\cite{SznZer}, \cite{Szn00} and~\cite{Z}).
\begin{theorem}\label{speed_regen}
Let us consider a RWRE in an elliptic i.i.d.~environment. Fix $\ell\in S^{d-1}$ and assume that the random walk is transient in the direction $\ell$. Then there exists a limiting deterministic velocity 
\[
 \lim_{n\to \infty}\frac{X_n}n = v, \qquad \PR \text{-a.s.,} 
 \]
 where
 \[
v=\frac{\ES[X_{\tau_1}\mid 0-\text{regen}]}{\ES[\tau_1\mid 0-\text{regen}]},
\]
even in the case where $\ES[\tau_1\mid 0-\text{regen}]=\infty$. In particular one can obtain that
\[
\text{ if }\ES[\tau_1\mid 0-\text{regen}]<\infty\text{ then }v>0.
\]

Furthermore, if $\ES[\tau_1^2\mid 0-\text{regen}]<\infty$, then
\[
\epsilon^{1/2}\bigl(X_{\lfloor \epsilon^{-1}n\rfloor}-\lfloor \epsilon^{-1} n\rfloor v\bigr),
\]
converges in law under $\PR$ to a brownian motion with a non-degenerate covariance matrix.
 \end{theorem}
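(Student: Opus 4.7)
My plan is to reduce everything to a classical law of large numbers and functional central limit theorem applied to the i.i.d.\ regeneration increments provided by Theorem~\ref{tau_indep}. Writing $Y_k = X_{\tau_{k+1}} - X_{\tau_k}$ and $\chi_k = \tau_{k+1}-\tau_k$ for $k\geq 1$, the pairs $(Y_k,\chi_k)$ are i.i.d.\ with common law $(X_{\tau_1},\tau_1)$ under $\PR[\,\cdot\mid 0\text{-regen}]$. Two pointwise bounds built into the construction will do most of the integrability work: $Y_k\cdot\ell\geq a>0$ (by the definition of $S_k$, each regeneration advances at least $a$ in the $\ell$-direction) and $\|Y_k\|_1\leq \chi_k$ (every step has unit $\ell^1$-length). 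The second inequality transfers finiteness of moments of $\chi_k$ automatically to $\|Y_k\|_1$, so no separate integrability hypothesis on the displacement is needed.

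For the law of large numbers, I would first apply Kolmogorov's SLLN (in the version admitting infinite mean for non-negative summands) to get
\[
\frac{\tau_n}{n}\longrightarrow \ES[\tau_1\mid 0\text{-regen}]\in(0,\infty],
\]
and, whenever this mean is finite, $X_{\tau_n}/n\to\ES[X_{\tau_1}\mid 0\text{-regen}]$. Setting $k(n)=\sup\{k:\tau_k\leq n\}$, the sandwich $\tau_{k(n)}\leq n<\tau_{k(n)+1}$ combined with $\|X_n-X_{\tau_{k(n)}}\|_1\leq \chi_{k(n)}$ promotes subsequential convergence along $(\tau_n)$ to convergence for all $n$, yielding $X_n/n\to v$ when $\ES[\tau_1\mid 0\text{-regen}]<\infty$. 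When this mean is infinite, a standard renewal argument gives $k(n)/n\to 0$ a.s., and combining with $\|X_{\tau_{k(n)}}\|_1\leq\tau_{k(n)}\leq n$ and a coordinatewise SLLN on the marginals that are integrable yields $X_n/n\to 0$, matching the formal value of $v$. Strict positivity $v\cdot\ell\geq a/\ES[\tau_1\mid 0\text{-regen}]>0$ in the finite-mean case is then immediate from $Y_k\cdot\ell\geq a$.

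For the functional CLT, the assumption $\ES[\tau_1^2\mid 0\text{-regen}]<\infty$ and $\|Y_k\|_1\leq\chi_k$ ensure that the i.i.d.\ vectors $Z_k:=Y_k-v\,\chi_k$ are centered with finite covariance. Donsker's theorem for i.i.d.\ random vectors gives a functional CLT for $n^{-1/2}\sum_{k\leq nt}Z_k$, and inverting the time-change $n\leftrightarrow k(n)$ (a standard renewal-reward manipulation) while absorbing the boundary remainder via Doob's maximal inequality applied to $\chi_{k(n)}$ converts this into the invariance principle for $\epsilon^{1/2}(X_{\lfloor\epsilon^{-1}n\rfloor}-\lfloor\epsilon^{-1}n\rfloor v)$.

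The main obstacle will be the non-degeneracy of the limiting covariance. In the direction $\ell$ non-degeneracy is immediate since $\chi_1$ has strictly positive variance and $Y_1\cdot\ell\in[a,\chi_1]$. For directions transverse to $\ell$, one must produce, with positive annealed probability under $\PR[\,\cdot\mid 0\text{-regen}]$, two regeneration excursions whose transverse components are genuinely different. This is precisely where the flexibility $a\in(2\sqrt d,10\sqrt d)$ from the remark is used: its width leaves room, using only the ellipticity hypothesis $(E)$ to perturb the environment in a finite region, to construct two distinct finite corridor paths that both qualify as first-regeneration paths but terminate at transversely separated endpoints. This forces the marginals of $Y_1\cdot w$ for every transverse direction $w$ to be non-constant, which in turn forces the covariance matrix of $Z_1$ to be non-degenerate.
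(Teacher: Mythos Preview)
The paper does not supply its own proof of Theorem~\ref{speed_regen}; it is stated as a consequence of prior work, with references to \cite{SznZer}, \cite{Szn00} and \cite{Z}. Your outline is essentially the standard route taken in those papers: exploit the i.i.d.\ structure of regeneration blocks from Theorem~\ref{tau_indep}, apply the SLLN and Donsker's theorem to $(Y_k,\chi_k)$, and transfer to the full sequence by the sandwich $\tau_{k(n)}\leq n<\tau_{k(n)+1}$. The use of $\|Y_k\|_1\leq\chi_k$ to propagate moment hypotheses, and the observation that the width of the window $a\in(2\sqrt d,10\sqrt d)$ gives enough room to produce transversely distinct regeneration paths for non-degeneracy, are exactly the standard ingredients.

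One genuine gap: your treatment of the case $\ES[\tau_1\mid 0\text{-regen}]=\infty$ does not go through as written. Knowing $k(n)/n\to 0$ and $\|X_{\tau_{k(n)}}\|_1\leq\tau_{k(n)}\leq n$ only gives boundedness of $X_{\tau_{k(n)}}/n$, not convergence to $0$; and the remainder $\|X_n-X_{\tau_{k(n)}}\|_1\leq\chi_{k(n)}$ is not small relative to $n$ (indeed $\tau_{k(n)+1}>n$). The phrase ``coordinatewise SLLN on the marginals that are integrable'' is also unjustified, since $\ES[\tau_1]=\infty$ gives no control on which components of $Y_1$ are integrable. The zero-speed conclusion is precisely the content of Zerner's paper~\cite{Z}, and the argument there is more indirect than a direct renewal bound: one first establishes that $X_n/n$ has a deterministic a.s.\ limit and then identifies it as $0$ via the relation with $k(n)/n$. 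You should either invoke~\cite{Z} directly for this case or reproduce that argument.
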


\section{Results}\label{sect_results}

\subsection{A criterion for zero-speed}

We call unit hypercube located at $x$ the set
\begin{align}\label{def_hx}
\mathfrak{H}_x=\Bigl\{x+\sum_{i=1}^d \epsilon_i e_i,\ \text{where } \epsilon_i\in \{0,1\} \text{ for all } i\in [1,d]\Bigr\}.
\end{align}

For simplicity we use $\mathfrak{H}_0=\mathfrak{H}$. Let us denote $(H)_{\alpha}$ the following hypothesis
\begin{equation}\label{hyp_H}
\max_{x\in \mathfrak{H}} \ES_x\Bigl[\Bigl(T^{\text{ex}}_{\mathfrak{H}}\Bigr)^\alpha\Bigr]=\infty.
\end{equation}

In the next theorem we exhibit a criterion for zero-speed. We believe that criterion to be sharp.
\begin{theorem}\label{lb_tail}
Let us consider a RWRE in an elliptic i.i.d.~environment. Fix $\ell\in S^{d-1}$ and assume that the random walk is transient in the direction $\ell$. 

If $(H)_1$ is verified, then the walk has zero speed, i.e.~$v=\vec{0}$.
\end{theorem}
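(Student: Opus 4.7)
The plan is to show $\ES[\tau_1\mid D=\infty]=\infty$, whereupon Theorem~\ref{speed_regen} immediately yields $v=\vec 0$. First, using translation invariance of $\mathbf P$, I pick $x^\ast\in\mathfrak{H}$ achieving the maximum in $(H)_1$ and transfer to the unit hypercube $H:=\mathfrak{H}-x^\ast\ni 0$, for which $\ES_0[T^{\text{ex}}_H]=\infty$. Since the $\ell$-diameter of $H$ is at most $\sqrt d<a$, the walk must exit $H$ before reaching $\mathcal{H}^+(a)$, so $\tau_1\geq S_1\geq T^{\text{ex}}_H$ almost surely; hence it suffices to prove $\ES[T^{\text{ex}}_H\,\1{D=\infty}]=\infty$.

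The heart of the matter is a pointwise lower bound of the form $\PR[T^{\text{ex}}_H\geq t,\,D=\infty]\geq c\,\PR[T^{\text{ex}}_H\geq t]$ for some $c>0$ independent of $t$; summing over $t$ then delivers the claim. To obtain it, I apply the strong Markov property at $T^{\text{ex}}_H$ and weaken $\{D=\infty\}$ to the smaller event that the walk, after exiting $H$ at $Y:=X_{T^{\text{ex}}_H}$, avoids both $H$ and $\{z\in\Z^d:z\cdot\ell<0\}$ forever. This weakened event depends only on $\omega|_{H^c}$ and is therefore independent of $\omega|_H$ and of the walk up to $T^{\text{ex}}_H$. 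Integrating $\omega|_{H^c}$ out first gives
\[
\PR[T^{\text{ex}}_H\geq t,\,D=\infty]\geq\sum_{y\in\partial H}\PR[T^{\text{ex}}_H\geq t,\,Y=y]\cdot\PR_y\bigl[\text{walk avoids }H\cup\{z\cdot\ell<0\}\bigr].
\]
By directional transience, the rightmost factor is bounded below by a positive constant $c_0$ whenever $y\cdot\ell\geq 0$, reducing the problem to the uniform-in-$t$ exit-direction estimate $\PR[Y\cdot\ell\geq 0\mid T^{\text{ex}}_H\geq t]\geq c_1>0$.

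The hard part will be this exit-direction bound: however strong the trap in $H$, one must show that a positive fraction of exits occur through $\{z\cdot\ell\geq 0\}$. My plan is to exploit ellipticity: from any vertex of $H$ there is, for ${\bf P}$-almost every $\omega$, a short deterministic path to an upward neighbour of $\partial H$ with positive quenched probability, and the path can be chosen to bypass the worst ``trap vertex'' so that the resulting factor is decorrelated from $P^\omega[T^{\text{ex}}_H\geq t]$; applying the strong Markov property at time $t$ (on $\{T^{\text{ex}}_H\geq t\}$ the walk is still inside $H$) and averaging against the i.i.d.\ product structure of $\mathbf P$ then produces the desired bound. A subtle case arises when $x^\ast\neq 0$, so that $H$ extends into $\{z\cdot\ell<0\}$: here I would first argue, using ellipticity and the independence of $\omega$ at different sites, that $\ES_0[T^{\text{ex}}_{H\cap\{z\cdot\ell\geq 0\}}]=\infty$ still holds, and then run the above reasoning on the truncated hypercube, which is contained in $\{z\cdot\ell\geq 0\}$. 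Combining these ingredients yields $\ES[\tau_1\mid D=\infty]=\infty$, and Theorem~\ref{speed_regen} concludes the proof.
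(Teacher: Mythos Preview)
Your approach differs from the paper's in a structural way that creates two gaps I do not see how to close.

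\textbf{The ``subtle case'' $x^\ast\neq 0$.} Your fix of passing to $H'=H\cap\{z\cdot\ell\geq 0\}$ and claiming $\ES_0[T^{\text{ex}}_{H'}]=\infty$ does not work. Hypothesis $(H)_1$ concerns the exit time of the \emph{full} hypercube, and the trap may require all $2^d$ corners; deleting some of them can destroy the trap entirely. For instance, in $d=2$ with $\ell=(1,1)/\sqrt2$ and $x^\ast=(1,1)$ one gets $H'=\{0\}$ and $T^{\text{ex}}_{H'}\equiv 1$. Note also that on $\{D=\infty\}$ the walk never visits $H\setminus H'$, so in fact $T^{\text{ex}}_H\,\1{D=\infty}=T^{\text{ex}}_{H'}\,\1{D=\infty}$; thus showing $\ES_0[T^{\text{ex}}_{H'}]=\infty$ would be \emph{necessary} for your argument, not just a convenient reduction.

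\textbf{The exit-direction bound.} Even in the easy case $H=\mathfrak{H}$, the inequality $\PR_0[Y\cdot\ell\geq 0\mid T^{\text{ex}}_{\mathfrak{H}}\geq t]\geq c_1>0$ is not established. Applying Markov at time $t$ leaves you with $P^\omega_{X_t}[Y\cdot\ell\geq 0]$, which is a function of $\omega|_{\mathfrak{H}}$---the same randomness that governs $\{T^{\text{ex}}_{\mathfrak{H}}\geq t\}$. Under mere ellipticity there is no uniform quenched lower bound, and conditioning on a strong trap may force the eventual exit to go through the corners adjacent to $\{z\cdot\ell<0\}$. Your phrase ``bypass the worst trap vertex so that the resulting factor is decorrelated'' does not yield an argument: the trap can involve all vertices simultaneously, and a short path inside $\mathfrak{H}$ to an upward neighbour still uses transition probabilities that are correlated with the trap strength.

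The paper sidesteps both issues with a different construction (Lemma~\ref{tail_fromtrap}). It does \emph{not} use a hypercube containing the origin; instead it places the trap in the translated hypercube $\mathfrak{H}_{de_1}\subset\mathcal{H}^+(0)$ and conditions on the positive-probability event $\mathcal{R}$ that every vertex in a surrounding shell and in a corridor back to $0$ is $\kappa_0$-elliptic. On $\mathcal{R}$ one gets deterministic (in $\omega$) lower bounds on the probabilities of: reaching the designated corner of the trap from $0$ without backtracking; after exiting the trap, steering through the regular shell to a fresh point beyond it; and from there never backtracking. Crucially, $\1{\mathcal{R}}$, the environment inside $\mathfrak{H}_{de_1}$, and the escape probability beyond are $\mathbf{P}$-independent, yielding directly $\PR[\tau_1\geq n\mid 0\text{-regen}]\geq c\,\max_{x\in\mathfrak{H}}\PR_x[T^{\text{ex}}_{\mathfrak{H}}\geq n]$ without any control on where the walk exits the trapping hypercube.
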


In the same way that we prove Theorem~\ref{lb_tail} (see~(\ref{proof_rem})), we can obtain lower bound estimates on regeneration times (see Section~\ref{sect_regen} for a precise definition of regeneration times).

\begin{remark}Let us consider a RWRE in an elliptic i.i.d.~environment. Fix $\ell\in S^{d-1}$ and assume that the random walk is transient in the direction $\ell$. Furthermore, we assume that there exists $\alpha>0$ such that we have $(H)_{\alpha}$. Then
\[
\ES[\tau_1^{\alpha}\mid 0-\text{regen}] =\infty.
\]
We believe that this last display is equivalent to $(H)_1$ when $\alpha=1$.
\end{remark}

\subsection{A positive speed criterion}

Let $\mathcal{C}$ be a unit hypercube of $\Z^d$ and $y\in \mathcal{C}$. We denote
\begin{equation}\label{def_Q}
Q_{y}^{\mathcal{C}}=\max_{z\in \partial_y \mathcal{C}} p^{\omega}(y,z-y),
\end{equation}
the highest probability leading out of $\mathcal{C}$ from $y$.

Let $H$ be a unit hypercube of $\Z^d$ and $x\in \mathcal{C}$. We denote for any $y\in \mathcal{C}$
\begin{equation}\label{def_Qtilde}
\widetilde{Q}_{x,y}^{\mathcal{C}}=P^{\omega}_x[T_{\partial \mathcal{C}} <T_x^+,X_{T_{\partial \mathcal{C}}}\in\partial_y \mathcal{C}]
\end{equation}
the  probability starting from $x$ to exit $\mathcal{C}$ via a neighbor of $y$ before returning to $x$.

In order to state our main result (Theorem~\ref{main}), which is our criterion for positive speed, we need to introduce the concept of Markovian hypercube which we define in the next section.

\subsubsection{Markovian hypercube}\label{sect_markhyp}

We denote $\overline{\mathfrak{H}}_0=\{\mathfrak{H}_x, \text{ where }x\in \Z^d\text{ and } 0\in \mathfrak{H}_x\}$, the sets of unit hypercubes containing $0$.

Let us  introduce the notion of hypercube discovered in a Markovian fashion. It is a function $h$ from $\Omega$ into $\overline{\mathfrak{H}}_0$ constructed in a particular manner that we are going to describe below.

We will start by introducing some notations before explaining intuitively the construction of a Markovian hypercube. We construct recursively functions $f_0,\ldots, f_{2^d-1}$ from $\Omega$ into $\Z^d$, such that
\begin{enumerate}
\item $f_0(\omega)=0$ ${\bf P}$-a.s.,
\item for $i\geq 0$, the function $f_{i+1}$ is measurable with respect to $\{p^{\omega}(x,\cdot), x\in \{f_0(\omega),\ldots, f_i(\omega)\}\}$,
\item for any $i\geq 0$, $f_{i+1}(\omega) \in \partial \{f_0(\omega),\ldots, f_i(\omega)\}$ ,
\item for any $i\geq 0$, ${\bf P}$-a.s.~there exists $H(\omega) \in \overline{\mathfrak{H}}_0$ such that we have $\{f_0(\omega),\ldots, f_i(\omega)\} \subset H(\omega)$.
\end{enumerate}

In words, this means we start from $0$ then, using the information given by the transition probabilities at $0$, we choose to add a site called $f_1(\omega)$.  Then we use the information given by the transition probabilities at $0$ and $f_1(\omega)$ to add a new adjacent site called $f_2(\omega)$. We continue this procedure recursively, with the only restriction that we can never add a point $f_{i+1}(\omega)$ such that the points $\{f_0(\omega),\ldots,f_{i+1}(\omega)\}$ would not be included in a unit hypercube. 

This procedure yields a hypercube $\{f_0(\omega),\ldots,f_{2^d-1}(\omega)\}$ containing $0$. A hypercube constructed in this way is said to be discovered in a Markovian fashion. Given such a hypercube $h(\omega)$, we denote $x_0(\omega)$ the only point in $\Z^d$ such that 
\begin{align}\label{def_x0}
\{x_0(\omega)+y \text{ with } y\in \mathfrak{H}\}=h(\omega).
\end{align}

A couple of functions  $(h(\omega),(\alpha_x(\omega))_{x\in \mathfrak{H}})$ is  called a marked Markovian hypercube if
\begin{enumerate}
\item $h(\omega)$ is a hypercube discovered in a Markovian fashion,
\item for any $x\in \mathfrak{H}$, the function $\alpha_x(\omega)$ goes from $\Omega$ into $\R^+$,
\item for any $x\in \mathfrak{H}$, the function $\alpha_x(\omega)$ is measurable with respect to $\{p^{\omega}(y,\cdot), y\in h(\omega)\}$.
\end{enumerate}

In a marked Markovian hypercube, we simply add, using the information given by the transition probabilities in $h(\omega)$, certain marks in $\R^+$ to every corner of the hypercube $h(\omega)$. We can see this by associating the mark $\alpha_x(\omega)$ to the corner $x_0(\omega)+x\in h(\omega)$ for every $x\in \mathfrak{H}$.

\begin{remark}\label{rem_markov}
It can easily be seen from the definition that a marked Markovian hypercube $(h(\omega),(\alpha_x(\omega))_{x\in \mathfrak{H}})$ is measurable with respect to $\{p^{\omega}(y,\cdot), y\in h(\omega)\}$. This means that a marked Markovian hypercube can be determined independently of the information outside of that hypercube.
\end{remark}

\subsubsection{Criterion $(K)_\alpha$}

Recalling the definitions at~(\ref{def_Q}) and~(\ref{def_Qtilde}), 

\begin{definition}\label{def_k}
We denote $(K)_{\alpha}$ the following hypothesis: 
\begin{enumerate}
\item there exists $\gamma_x\in \R^+$, for every $x\in \mathfrak{H}$, such that we have
\[
{\bf E}\Bigl[\Bigl(Q_x^{\mathfrak{H}}\Bigr)^{-\gamma_x}\Bigr]<\infty \qquad \text{for all $x\in \mathfrak{H}$},
\]
\item there exists a marked Markovian hypercube $(h(\omega),(\alpha_x(\omega))_{x\in \mathfrak{H}})$  such that 
\[
{\bf E}\Bigl[\prod_{x\in \mathfrak{H}} \Bigl(\widetilde{Q}_{0,x_0(\omega)+x}^{h(\omega)}\Bigr)^{-\alpha_x(\omega)}\Bigr]<\infty,
\]
\item there exists $\epsilon>0$ such that 
\[
\sum_{x\in \mathfrak{H}} (\gamma_x \wedge \alpha_x(\omega)) \geq \alpha+\epsilon \qquad {\bf P}\text{-a.s}.
\]
\end{enumerate}
\end{definition}

This condition may seem very complicated. This is why, in Section~\ref{sect_discussion}, we shall dedicate a few pages to explaining the meaning of this condition and how to apply it. In particular, we will justify why the conditions involving the exponents $\gamma_i$ are  verified in the positive speed regime under some regularity properties of the tails at $0$ of $Q_x^{\mathfrak{H}}$ for $x\in \mathfrak{H}$ (see Lemma~\ref{cond_gamma_easy} and below).

\subsubsection{Criterion for positive speed}

The next result proves that, under sufficiently strong transience conditions, the condition $(K)_1$ (see Definition~\ref{def_k}) and $(E)_0$ (defined at~(\ref{def_e})) imply positive speed.
\begin{theorem}\label{main}
Let us consider a RWRE in an elliptic i.i.d.~environment that verifies conditions $(E)_0$ and $(P)_M^{\ell}$ for some $M\geq 15d+5$ and $\ell \in S^{d-1}$. If furthermore condition $(K)_1$ is verified, then  the walk is ballistic in the direction $\ell$, i.e.
\[
\lim_{n\to \infty} \frac{X_n}n=v \text{ where } v\cdot \ell>0.
\]
\end{theorem}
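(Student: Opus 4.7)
The plan is to reduce positive speed to integrability of the first regeneration time, then to prove the required tail bound using the criterion $(K)_1$. Specifically, by Theorem~\ref{speed_regen} it suffices to show $\ES[\tau_1 \mid 0\text{-regen}] < \infty$. Since the assumptions include $(E)_0$ and $(P)_M^\ell$ for $M \geq 15d+5$, the result of \cite{CR} stated as Theorem~\ref{PequivT} promotes these hypotheses to $(T')^\ell$. Under $(T')$, the backtracking probabilities on large scales decay faster than any polynomial, so the only possible obstruction to finiteness of $\ES[\tau_1]$ is genuine local trapping in a small region; the role of $(K)_1$ is precisely to rule this out at the scale of unit hypercubes, while keeping an $\epsilon$-cushion in the sum of exponents.

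The heart of the argument, following the outline of Section~\ref{sect_thm_proof}, is a quenched lower bound on the probability of reaching a point far in the direction $\ell$ (Proposition~\ref{reachlog}). To establish it I would decompose the walk's trajectory hypercube-by-hypercube. Whenever the walk enters a fresh unit hypercube, I use the Markovian hypercube construction of Section~\ref{sect_markhyp} to unveil just enough of the local environment to produce an efficient exit strategy: the factor $\widetilde{Q}_{0,x_0(\omega)+x}^{h(\omega)}$ in item (2) of $(K)_1$ gives an annealed moment bound on the probability that, starting from the entry vertex, the walk crosses $h(\omega)$ via a prescribed side without loitering, while the factor $Q_x^{\mathfrak{H}}$ in item (1) controls a single-step escape when the walk later returns to a particular corner of an already-explored hypercube. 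By Remark~\ref{rem_markov} the marked Markovian hypercube is measurable only with respect to the environment inside it, so the environment outside remains fresh and one can chain hypercube exits together via the i.i.d.~structure. Combining this chaining with $(T')$, which guarantees that after escaping a hypercube the walk does make genuine progress rather than being pushed back, yields the desired quenched reaching estimate, with a lower bound of deterministic polylogarithmic order.

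The final step is to convert this quenched estimate into a tail bound for $\tau_1$ strong enough to give finite first moment under $\PR[\cdot \mid 0\text{-regen}]$. Standard regeneration machinery (recalled in Section~\ref{sect_ref_res}) reduces this to the following: on $\{\tau_1 > n\}$, either the walk has backtracked beyond a previously attempted regeneration level, an event handled by $(T')$, or it has failed many independent attempts to reach the next level, each attempt succeeding with at least the quenched lower bound from Proposition~\ref{reachlog}. A Markov-type inequality then converts the finiteness of the annealed moment in item (2) of $(K)_1$ into a polynomial tail bound for $\tau_1$, with exponent governed by $\sum_{x\in \mathfrak{H}}(\gamma_x \wedge \alpha_x(\omega))$; the $+\epsilon$ slack in item (3) provides exactly the excess power above $1$ that is needed for summability, yielding $\ES[\tau_1 \mid 0\text{-regen}]<\infty$ and hence $v\cdot\ell>0$. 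The main obstacle I anticipate is the bookkeeping of independence: one must arrange the exploration so that the Markovian hypercube around the walk's current position is decoupled from all previously revealed sites, while ensuring that the relevant exit events are defined consistently across the random choice of $h(\omega)$. This delicate decoupling is precisely what forces the definition of $(K)_1$ in terms of a marked Markovian hypercube rather than a fixed one, and getting the moment bounds to propagate through the regeneration decomposition without losing the $\epsilon$-cushion is the technical core of the proof.
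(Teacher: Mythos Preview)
Your high-level skeleton is right (reduce to a tail bound on $\tau_1$ via Theorem~\ref{speed_regen}, promote $(P)_M^\ell$ to $(T')^\ell$ via Theorem~\ref{PequivT}, and use $(K)_1$ for a quenched attainability estimate), but two of the three working parts are described in a way that would not actually close.

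First, your picture of Proposition~\ref{reachlog} is off. The paper does \emph{not} chain Markovian hypercubes along the trajectory. The Markovian hypercube $h(\omega)$ is used exactly once, at the origin. After exiting $h(\omega)$ through the corner $x_0(\omega)+x$, the path $\mathcal{Y}_x^{(n)}$ continues by $n-1$ single steps, each step choosing the edge realizing $Q_{y_i}^{\mathfrak{H}_{y_i-x}}$; these $2^d$ straight paths (one per corner $x\in\mathfrak{H}$) are pairwise disjoint outside $h(\omega)$. The whole point is that the bound
\[
\min_{x\in\mathfrak{H}}\bigl(\pi_x^{(n)}\bigr)^{-(\alpha+\epsilon)} \le \prod_{x\in\mathfrak{H}}\bigl(\pi_x^{(n)}\bigr)^{-(\alpha_x(\omega)\wedge\gamma_x)}
\]
turns the maximum over $2^d$ corners into a product, and then the i.i.d.\ structure lets the expectation factor into the single $\widetilde{Q}$-moment of item~(2) of $(K)_1$ times a power of the $Q_x^{\mathfrak{H}}$-moments of item~(1). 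No $(T')$ enters here, and there is no ``return to an already-explored hypercube'' to worry about; the independence bookkeeping you flag as the main obstacle is in fact trivial once the paths are chosen to be non-intersecting.

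Second, and more seriously, your tail argument is missing the bridge between attainability and $\tau_1$. Corollary~\ref{reachlog2} only guarantees that from a typical vertex the walk reaches \emph{some} point at distance $\lfloor\eta\log u\rfloor$ before returning; it says nothing about that point lying in the direction $\ell$, so it cannot be fed directly into ``many independent attempts to reach the next level''. The paper's argument is different: on $\{\tau_1>u\}$, either the walk has exited a box $C_L$ of polylogarithmic size before $\tau_1$ (controlled by the regeneration-radius bound of Proposition~\ref{cor_sup} under $(T')$), or some vertex $x_1\in C_L$ is visited at least $u/|C_L|$ times, forcing $P_{x_1}^\omega[T_{C_L}^{\mathrm{ex}}<T_{x_1}^+]$ to be tiny. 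Combined with the attainability estimate, this produces a vertex ($x_1$ or the reached point $x_2$) from which the quenched probability of exiting a tilted box $B_{\beta,L}$ through its front boundary is at most $e^{-cL^\beta}$. That is precisely an atypical quenched exit event, whose ${\bf P}$-probability is bounded by Proposition~\ref{atypical} of \cite{CR}; this step is absent from your outline and is what actually converts the attainability estimate into the tail bound $\PR[\tau_1>u]\le u^{-(1+\delta)}$.
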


In the course of the proof of Theorem~\ref{main}, we obtain tail estimates on $\tau_1$, see Proposition~\ref{tailtau}.

Although the criterion $(K)_1$ is very flexible, it can be a bit cumbersome to verify it. However, in concrete examples (for example, see Proposition \ref{new_example}, which exhibits new examples of ballistic walks), we can use a much simpler criterion $(\widetilde{K})_1$ defined by:
\[
\min_{x\in \mathfrak{H}} {\bf E}\Bigl[\Bigl(Q_x^{\mathfrak{H}}\Bigr)^{-(1+\epsilon)}\Bigr]<\infty\text{, for some } \varepsilon>0.
\]
\begin{remark}\label{suff_cond} Condition $(\widetilde{K})_1$ is easily seen to imply $(K)_1$, by choosing the $\gamma$'s and the Markovian hypercube conveniently. Indeed, assume $(\widetilde{K})_1$ is verified and denote $x_{\min}\in\mathfrak{H}$ the vertex for which the minimum is reached. Then, define, for any $x\in\mathfrak{H}$, $\alpha_x(\omega)=\gamma_x=(1+\varepsilon)\1{x=x_{\min}}$ and, recalling \eqref{def_x0}, let $h(\omega)$ be such that $x_0(\omega)=-x_{\min}$.\\
This means that if a RWRE is in an elliptic i.i.d.~environment verifies conditions $(\widetilde{K})_1$, $(E)_0$ and $(P)_M^{\ell}$ for some $M\geq 15d+5$ and $\ell \in S^{d-1}$, then  the walk is ballistic in the direction $\ell$.
\end{remark}


\subsubsection{Central limit theorems} Under some stronger assumptions, we prove an annealed central limit theorem, using Theorem \ref{speed_regen}, and also a quenched central limit theorem, using the main result of Bouchet, Sabot and dos Santos \cite{BDSS} (improving on previous results by Rassoul-Agha, Sepp\"al\"ainen \cite{RAS} and Berger, Zeitouni \cite{BZ}).

\begin{theorem}\label{cor_TCL}
Consider a RWRE in $\Z^d$ with $d\ge2$. Let $\ell\in S^{d-1}$ and $M\ge15d+5$. Assume that the random walk satisfies conditions $(P)_M^\ell$, $(E)_0$ and $(K)_2$, then we have an annealed central limit theorem, i.e.
\[
\epsilon^{1/2}\left(X_{\lfloor\epsilon^{-1}n\rfloor}-\lfloor\epsilon^{-1}n\rfloor v\right)
\]
converges in law under $\PR_0$ as $\epsilon\rightarrow0$ to a Brownian Motion with non-degenerate covariance matrix. Under the same conditions, we also have a quenched central limit theorem, i.e.~the previous expression also converges in law under $P_0^\omega$ as $\epsilon\rightarrow0$ to a Brownian Motion with non-degenerate covariance matrix for $\omega$-{\bf P}-a.s.
\end{theorem}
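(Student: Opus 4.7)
The plan is to reduce both central limit theorems to a moment estimate on the first regeneration time, and then invoke the two black boxes available: Theorem~\ref{speed_regen} for the annealed statement, and the main result of~\cite{BDSS} for the quenched one. The key quantity to control is $\ES[\tau_1^2\mid 0\text{-regen}]$. Note first that $(P)_M^\ell$ and $(E)_0$ together with Theorem~\ref{PequivT} give $(T')^\ell$, so in particular the walk is directionally transient, and the regeneration structure of Section~\ref{sect_regen} is well-defined and non-degenerate.

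The heart of the matter will be to show that condition $(K)_2$, replacing $(K)_1$ in Theorem~\ref{main}, upgrades Proposition~\ref{tailtau} from a tail estimate giving $\ES[\tau_1\mid 0\text{-regen}]<\infty$ to a tail estimate giving $\ES[\tau_1^2\mid 0\text{-regen}]<\infty$. The strategy is to revisit the proof of Theorem~\ref{main} and rerun it with the parameter $\alpha=2$ in place of $\alpha=1$. In that proof, $\tau_1$ is decomposed into a "global" part, whose tail is controlled by $(T')^\ell$ (and therefore has stretched-exponential decay, hence all finite moments), and a "local" trapping part, which is controlled by the exit time of unit hypercubes along the trajectory. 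The hypercube exit times are in turn controlled by the $Q_x^{\mathfrak{H}}$ and $\widetilde{Q}_{0,\cdot}^{h(\omega)}$ quantities appearing in $(K)_\alpha$: the bound $\sum_x(\gamma_x\wedge\alpha_x)\ge\alpha+\epsilon$ is designed precisely so that, using Hölder's inequality combined with the Markovian nature of $h(\omega)$ (Remark~\ref{rem_markov}), one obtains a tail bound on the hypercube exit time of order $t^{-(\alpha+\epsilon)}$. Summing over the polynomially-many hypercubes traversed before $\tau_1$ (bounded using the $(T')^\ell$ estimates) preserves this polynomial tail, yielding
\[
\PR[\tau_1>t\mid 0\text{-regen}]\le C\, t^{-(\alpha+\epsilon/2)},
\]
and in particular finite $\alpha$-th moment when the ingredients in $(K)_\alpha$ are used with $\alpha=2$.

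Once $\ES[\tau_1^2\mid 0\text{-regen}]<\infty$ is established, the annealed CLT with non-degenerate covariance matrix is exactly the content of the second half of Theorem~\ref{speed_regen} (the non-degeneracy being ensured by the choice $a\in(2\sqrt d,10\sqrt d)$ in the construction of the regeneration times). For the quenched CLT, I would verify the hypotheses of the main theorem of~\cite{BDSS}: their result deduces a quenched CLT from an annealed CLT plus a suitable integrability condition on the regeneration structure in an elliptic i.i.d.~environment. The annealed CLT has just been established, ellipticity and i.i.d.~structure are assumed, and the required integrability is a consequence of the tail bound on $\tau_1$ proved above together with the standard decoupling across regeneration blocks provided by Theorem~\ref{tau_indep}.

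The main obstacle will be the first step, namely verifying that the Markovian-hypercube mechanism underlying $(K)_\alpha$ and the propagation of tail bounds through the regeneration decomposition do give a tail of the claimed polynomial order $\alpha+\epsilon$ rather than a worse one; concretely, it requires controlling the joint contribution of all the successive hypercube exit events in $[0,\tau_1]$ without losing the extra $\epsilon$ that distinguishes "finite $\alpha$-th moment" from "barely finite first moment," which is exactly the role of the strict inequality $\sum_x(\gamma_x\wedge\alpha_x)\ge\alpha+\epsilon$ in Definition~\ref{def_k}. Once this bookkeeping is in place, everything else is a routine application of the cited theorems.
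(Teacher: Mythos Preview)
Your high-level approach is correct and matches the paper's: apply Proposition~\ref{tailtau} with $\alpha=2$ to obtain $\PR[\tau_1>u]\le Cu^{-(2+\delta)}$, hence $\ES[\tau_1^2\mid 0\text{-regen}]<\infty$; then invoke Theorem~\ref{speed_regen} for the annealed CLT and the main result of~\cite{BDSS} (whose hypothesis is $(T')^\ell$, which you correctly obtain via Theorem~\ref{PequivT}) for the quenched one.

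Two minor corrections. First, there is no need to ``rerun'' or ``upgrade'' anything: Proposition~\ref{tailtau} is already stated and proved for arbitrary $\alpha>0$ under $(K)_\alpha$, so you simply cite it with $\alpha=2$. Second, your sketch of \emph{how} that tail bound is obtained (a H\"older-type bound on hypercube exit times, summed over the hypercubes traversed before $\tau_1$) is not the paper's mechanism. The actual argument splits $\{\tau_1>u\}$ according to whether the walk exits a box $C_L$ of logarithmic size before $\tau_1$; the exit case is handled by $(T')$ via Proposition~\ref{cor_sup}, while the non-exit case is controlled by combining the attainability estimate of Corollary~\ref{reachlog2} (this is where $(K)_\alpha$ enters, producing a single escape probability bound rather than a sum over hypercubes) with the atypical quenched exit estimate of Proposition~\ref{atypical}. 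Since Proposition~\ref{tailtau} is a citable black box here, this inaccuracy does not affect the validity of your proof of Theorem~\ref{cor_TCL}.
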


\section{Discussion on the main results}\label{sect_discussion}

Let us now address some questions the reader might have about hypothesis $(K)_{1}$.
\begin{enumerate}
\item What does this condition intuitively mean and why?
\item How do we apply our criterion? Why is this criterion general?
\item Why do unit hypercubes appear?
\end{enumerate}

Through the course of these explanations we hope to convince the reader that the hypothesis $(K)_1$ is a near optimal criterion for positive speed.

\subsection{What does this condition intuitively mean and why?}\label{sect_why}

We believe that $(K)_1$ essentially means that the expected annealed exit time of a hypercube has a moment of order $1+\epsilon$ for some $\epsilon>0$. This would mean that $(K)_1$ and $(H)_1$  cover most cases of RWREs and allow us to determine whether or not the walk has positive speed under the hypotheses $(E)_0$ and $(P)_M^{\ell}$.

Let us now explain why $(K)_1$ and $(H)_1$ are close to complementary. The complementary condition $(H)_1^c$ exactly means that
\begin{align}\label{def_Hc}
\ES_x\left[T_{\mathfrak{H}}^{\text{ex}}\right]<\infty\text{, for all } x\in\mathfrak{H}.
\end{align}

\subsubsection{Why part $(1)$ of $(K)_1$ is typically verified in the positive speed regime}

Recall that $\overline{\mathfrak{H}}_0$ is the set of all the hypercubes containing $0$. 

\begin{lemma}\label{cond_gamma_easy}
If $(H)_1^c$ holds, then for any  hypercube $H\in\overline{\mathfrak{H}}_0$, we have
\[
\mathbf{E}\left[\min_{y\in H}\left(Q^{H}_{y}\right)^{-1}\right]<\infty.
\]
\end{lemma}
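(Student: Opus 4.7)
The plan is to compare the quenched expected exit time of $H$ to $1/\max_{y\in H}Q^H_y$ and then take the annealed expectation, invoking $(H)_1^c$. The key observation is that from any corner $y$ of a unit hypercube $H$, the number of neighbors outside $H$ is exactly $d$ (each coordinate direction contributing one ``outside'' neighbor), so the one-step probability of exit from $y$ is at most $d\,Q_y^H$.

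First, I would set $q(\omega)=\max_{y\in H}Q_y^H$ and show the quenched lower bound
\[
E_x^{\omega}[T_H^{\text{ex}}]\ \geq\ \frac{1}{d\,q(\omega)}\ =\ \frac{1}{d}\min_{y\in H}\bigl(Q_y^H\bigr)^{-1}
\]
for every $x\in H$. This follows from a short Markov/geometric argument: at each step the walker is at some vertex $y\in H$, from which the probability of leaving $H$ in one step equals $\sum_{z\in\partial_y H}p^\omega(y,z-y)\le d\,Q_y^H\le d\,q(\omega)$. Iterating this bound along the trajectory yields $P_x^\omega[T_H^{\text{ex}}>n]\ge (1-d\,q(\omega))^n$, whence summing in $n$ gives $E_x^{\omega}[T_H^{\text{ex}}]\ge 1/(d\,q(\omega))$.

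Next, I would take the annealed expectation and rearrange to obtain
\[
\mathbf{E}\!\left[\min_{y\in H}\bigl(Q_y^H\bigr)^{-1}\right]\ \leq\ d\,\mathbb{E}_x\!\left[T_H^{\text{ex}}\right].
\]
Since $H\in\overline{\mathfrak H}_0$ contains $0$, there is some $x_0\in\mathbb{Z}^d$ with $H=\mathfrak H_{x_0}$. By the i.i.d.\ assumption on the environment, $\mathbb{E}_x[T_H^{\text{ex}}]=\mathbb{E}_{x-x_0}[T_{\mathfrak H}^{\text{ex}}]$ for $x\in H$, and by the definition of $(H)_1^c$ recalled in \eqref{def_Hc}, this last quantity is finite for every $x-x_0\in\mathfrak H$. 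This gives the conclusion.

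There is no serious obstacle here; the only point that requires a moment's care is the count $|\partial_y H|=d$, which follows from the fact that $y$ is a corner of a unit hypercube so exactly one of $y\pm e_i$ lies outside $H$ for each coordinate $i$. The rest is a standard geometric tail bound plus the translation invariance of $\mathbf{P}$.
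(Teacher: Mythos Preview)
Your proof is correct and follows essentially the same approach as the paper's one-line sketch: bound the one-step exit probability from any corner of $H$ by (a constant times) $\max_{y\in H}Q_y^H$, deduce a geometric lower bound on the exit time, and use translation invariance together with $(H)_1^c$. You are in fact more careful than the paper, which omits the factor $d$ coming from $|\partial_y H|=d$ and leaves the translation-invariance step implicit.
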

The proof of this lemma is straightforward,
it follows from the fact that, on any point $y\in H$, the exit probability of $H$ is at most $\max_{y\in H} Q^{H}_{y}$.\\

For part $(1)$ of $(K)_1$, we require that  there exist $\epsilon>0$ and $\gamma_y\geq 0$ for all $y\in H$ such that
 \[
 \text{for all $y\in H$, we have }\mathbf{E}[\left(Q^H_y\right)^{-\gamma_y}]<\infty,
 \] 
 or, equivalently by the independence of  $Q^H_y$ for $y\in H$,
 \begin{align}\label{casoule2}
\mathbf{E}\Bigl[\prod_{y\in H}\left(Q^H_y\right)^{-\gamma_y}\Bigr]<\infty,
\end{align}
with $\displaystyle{\sum_{y\in H}} \gamma_y \geq1+\epsilon$.
 
In most generic cases, where the tails of all $Q^H_y$ for $y\in H$ are sufficiently smooth, e.g.~polynomial tails or Dirichlet environment, one can see that this condition is equivalent to $\mathbf{E}\Bigl[\displaystyle{\min_{y\in H}}\Bigl( Q^{H}_{y}\Bigr)^{-(1+\epsilon)}\Bigr]<\infty$. This is very similar to the condition in Lemma~\ref{cond_gamma_easy}, although slightly stronger because of the $\epsilon$.

Besides, note that it is easy to see that \eqref{casoule2} implies that the condition in Lemma~\ref{cond_gamma_easy} is verified, indeed:
\[
\prod_{x\in H} \Bigl({Q}_{x}^{H}\Bigr)^{-\gamma_x}\geq \prod_{x\in H} \Bigl(\min_{y\in H}\frac 1 { {Q}_{y}^{H}}\Bigr)^{\gamma_x}\geq \min_{y\in H}\frac 1 { {Q}_{y}^{H}},
\]
where we used that for any $x\in H$ we have $\min_{y\in H}\frac 1 { {Q}_{y}^{H}}\leq  \frac 1 { {Q}_{x}^{}}$.\\

Recalling Theorem~\ref{lb_tail}, we know that $(H)_1^c$ holds whenever the speed is positive, we hope to have convinced the reader that part $(1)$ of $(K)_1$ is typically verified in the positive speed regime.

\subsubsection{How does part $(2)$ of $(K)_1$ relate to the exit time of hypercubes}

Let us now explain why $(K)_1$ and $(H)_1$ are close to complementary.

The following proposition states a condition which is equivalent to $(H)_1^c$.
\begin{proposition}\label{casoule}
The condition $(H)_1^c$ holds if, and only if, for any  hypercube $H\in\overline{\mathfrak{H}}_0$, we have
\[
\mathbf{E}\left[\min_{y\in H}\left(\widetilde{Q}^{H}_{0,y}\right)^{-1}\right]<\infty.
\]
\end{proposition}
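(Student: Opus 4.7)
The plan is to sandwich both sides of the equivalence by the Green function $G_H^\omega(x,z)$ of the walk killed upon exiting $H$. Since the number of visits to $y\in H$ starting at $y$ before exit is geometric with success probability $q_{H,y}:=P_y^\omega[T_{\partial H}<T_y^+]$, we have the standard identity $G_H^\omega(y,y)=1/q_{H,y}$. Moreover, because in a unit hypercube each vertex of $\partial H$ is adjacent to exactly one vertex of $H$, the sets $\{\partial_y H\}_{y\in H}$ partition $\partial H$, giving $q_{H,0}=\sum_{y\in H}\widetilde{Q}^H_{0,y}$ and hence
\[
\frac{1}{q_{H,0}}\leq \min_{y\in H}\bigl(\widetilde{Q}^H_{0,y}\bigr)^{-1}\leq \frac{2^d}{q_{H,0}}.
\]

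For the implication $(H)_1^c\Rightarrow$ condition, I would use the trivial bound $E_0^\omega[T^{\text{ex}}_H]\geq G_H^\omega(0,0)=1/q_{H,0}$ combined with the bracketing above to get $\min_{y\in H}(\widetilde{Q}^H_{0,y})^{-1}\leq 2^d\,E_0^\omega[T^{\text{ex}}_H]$. Taking $\mathbf{E}$ and exploiting translation invariance of $\mathbf{P}$ to rewrite $\mathbf{E}[E_0^\omega[T^{\text{ex}}_H]]$ as $\ES_x[T^{\text{ex}}_\mathfrak{H}]$ for the unique $x\in\mathfrak{H}$ with $H=\mathfrak{H}-x$, hypothesis $(H)_1^c$ immediately yields the required finiteness for every $H\in\overline{\mathfrak{H}}_0$.

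For the converse, I would decompose $E_0^\omega[T^{\text{ex}}_H]=\sum_{y\in H}G_H^\omega(0,y)$ and bound each term via the strong Markov identity $G_H^\omega(0,y)=P_0^\omega[T_y<T^{\text{ex}}_H]\cdot G_H^\omega(y,y)\leq 1/q_{H,y}\leq \min_{z\in H}(\widetilde{Q}^H_{y,z})^{-1}$. Taking $\mathbf{E}$ and invoking translation invariance of $\mathbf{P}$, each of the $2^d$ summands $\mathbf{E}[\min_{z\in H}(\widetilde{Q}^H_{y,z})^{-1}]$ coincides with $\mathbf{E}[\min_{z'\in H-y}(\widetilde{Q}^{H-y}_{0,z'})^{-1}]$, and $H-y\in\overline{\mathfrak{H}}_0$ since $y\in H$; the hypothesis of the proposition therefore makes each summand finite, giving $\mathbf{E}[E_0^\omega[T^{\text{ex}}_H]]<\infty$, and a final use of translation invariance rewrites this as $\ES_x[T^{\text{ex}}_\mathfrak{H}]<\infty$ for every $x\in\mathfrak{H}$, which is $(H)_1^c$. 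No serious obstacle is anticipated; the argument is essentially routine Green function bookkeeping combined with translation invariance of $\mathbf{P}$, the only mildly delicate point (used in both directions) being the combinatorial fact that the neighborhoods $\partial_y H$ of distinct vertices of a unit hypercube $H$ are pairwise disjoint.
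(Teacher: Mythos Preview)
Your proof is correct and follows essentially the same route as the paper's: both arguments sandwich the quenched expected exit time between $1/q_{H,x_0}$ (the diagonal Green function) and $\sum_{y\in H}1/q_{H,y}$ via the identity $E^\omega_{x_0}[N(y)]=P^\omega_{x_0}[T_y<T^{\text{ex}}_H]\cdot G^\omega_H(y,y)$, then use the bracketing $\max_y\widetilde Q^H_{x,y}\le q_{H,x}\le 2^d\max_y\widetilde Q^H_{x,y}$ together with translation invariance to pass between the ``centered at $0$, vary $H$'' and ``fixed $\mathfrak H$, vary $x$'' formulations. The paper writes this out with the explicit visit counts $N(x)$ rather than the Green function language, but the content is identical.
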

\begin{proof}
It will be sufficient to show that
\begin{align}\label{equiv_Hc}
\mathbf{E}\left[\min_{y\in\mathfrak{H}}\left(\widetilde{Q}^{\mathfrak{H}}_{x,y}\right)^{-1}\right]<\infty \text{, for all }  x\in\mathfrak{H},
\end{align}
which, by translation invariance of the environment, can be equivalently stated in the following way: for any hypercube $H$ containing $0$, we have
\[
\mathbf{E}\left[\min_{y\in H}\left(\widetilde{Q}^{H}_{0,y}\right)^{-1}\right]<\infty.
\]

For all $x\in \mathfrak{H}$, we define the number of visits to $x$ before exiting $\mathfrak{H}$ by
\[
N(x)=\sum_{n=0}^{T_{\mathfrak{H}}^{\text{ex}}}\1{X_n=x},
\]
and notice that
\begin{equation}\label{petitedefT}
T_{\mathfrak{H}}^{\text{ex}}=\sum_{x\in \mathfrak{H}} N(x).
\end{equation}

Define also, for any $x\in \mathfrak{H}$,
\[
\widetilde{Q}^{\mathfrak{H}}_x=\sum_{y\in \mathfrak{H}}\widetilde{Q}^{ \mathfrak{H}}_{x,y},
\]
which verifies that for any $y\in \mathfrak{H}$
\begin{equation}\label{ineq_Q}
\widetilde{Q}^{ \mathfrak{H}}_{x,y}\leq \widetilde{Q}^{\mathfrak{H}}_x \leq 2^d\max_{y\in \mathfrak{H}}\widetilde{Q}^{ \mathfrak{H}}_{x,y}.
\end{equation}

Now, for any $x\in\mathfrak{H}$ and any starting point $x_0\in\mathfrak{H}$ (could be the same), we get
\begin{align*}
E_{x_0}^\omega\left[N(x)\right]&=\sum_{n\ge1}P_{x_0}^\omega\left[N(x)\ge n\right]\\
&=P_{x_0}^\omega\left[T_x<T_{\mathfrak{H}}^{\text{ex}}\right]\sum_{n\ge0} \left(1-\widetilde{Q}^{\mathfrak{H}}_x\right)^n\\
&=\frac{P_{x_0}^\omega\left[T_x<T_{\mathfrak{H}}^{\text{ex}}\right]}{\widetilde{Q}^{\mathfrak{H}}_x}.
\end{align*}

In particular, we have, for any $x_0\in\mathfrak{H}$,
\[
\frac{1}{\widetilde{Q}^{\mathfrak{H}}_{x_0}}\le E_{x_0}^\omega\left[T_{\mathfrak{H}}^{\text{ex}}\right]\le \sum_{y\in\mathfrak{H}}\frac{1}{\widetilde{Q}^{\mathfrak{H}}_{y}},
\]
where the lower bound is obtained by keeping only the term for which $x=x_0$ in the sum in \eqref{petitedefT}.

Thus, $(H)_1^c$, defined in \eqref{def_Hc}, holds if, and only if,
\begin{align*}
\mathbf{E}\left[\frac{1}{\widetilde{Q}^{\mathfrak{H}}_{x}}\right]<\infty \text{, for all }  x\in\mathfrak{H},
\end{align*}
which is also equivalent by~(\ref{ineq_Q}) to \eqref{equiv_Hc}.
\end{proof}


For technical reasons it is difficult for us to use the condition appearing in Proposition \ref{casoule}. Indeed, we want to use large deviations which requires slightly stronger assumptions. The way we strenghten the condition in Proposition~\ref{casoule} is similar to what we did in \eqref{casoule2}.
In this new case, the random variables $\widetilde{Q}^{H}_{0,y}$ are correlated. For this reason, we introduce the following condition which is slightly more flexible: there exist random variables $(\alpha_x(\omega))_{x\in H}$ such that
\begin{equation}\label{cond_3}
{\bf E}\Bigl[\prod_{x\in H} \Bigl(\widetilde{Q}_{0,x}^{H}\Bigr)^{-\alpha_x(\omega)}\Bigr]<\infty,
\end{equation}
with $\sum_{x\in H}  \alpha_x(\omega) \geq 1$, ${\bf P}$-almost surely. As the reader may notice that this is similar to  parts (2) and $(3)$ of condition $(K)_1$. On the one hand, we lost a constant $\epsilon$. On the other hand, we only require this condition \eqref{cond_3} to be verified on a Markovian hypercube instead of all hypercubes containing $0$.
Only having to verify this property on a single Markovian hypercube gives us a lot of flexibility. The flip side of this flexibility is that we require a slightly stronger condition on the $\alpha$'s (see part $(3)$ of $(K)_1$). This will be discussed in the next section.\\
At first glance, allowing our exponents $\alpha_x(\omega)$ to be random in condition $(K)_1$ may seem a bit odd. But this randomness gives us some extra flexibility and makes our condition more general. In particular, it allows us to very easily check that our new condition is more general than previous ones (see Proposition \ref{propmoreg}).

\subsubsection{Some comments on part $(3)$ of $(K)_1$}\label{sect_casoule}

The reader can easily realize that, because of part $(3)$ of $(K)_1$ , it is useless to choose $\alpha_x(\omega)>\gamma_x$ for any $x\in\mathfrak{H}$.

Such a condition is obviously needed, since part $(2)$ of $(K)_1$ can always be verified with $\sum_{x\in h(\omega)} \alpha_x(\omega)\ge1+\epsilon$. Indeed, using only the transition probabilities at $0$, we can always construct a Markovian hypercube $h(\omega)$ from which the walker can exit in one step with probability at least $1/(2d)$ through an edge $e(\omega)$. By assigning $\alpha_{e(\omega)}(\omega)=2$, we can verify part $(2)$ of $(K)_1$, but part $(3)$ is not necessarily verified.\\

Intuitively, part $(3)$ of $(K)_1$ prevents us from using too strongly the conditioning provided by $h(\omega)$. In particular, the tail at $0$ of $\widetilde{Q}_{0,x_0(\omega)+x}^{h(\omega)}$ cannot be much lighter than the one of $Q_x^{\mathfrak{H}}$.

In the next section, we will explain how to choose the Markovian hypercube in order to verify $(K)_\alpha$.

\subsection{How do we apply the criterion? Why is this criterion general?}\label{sect_crit_pow}

To apply the criterion $(K)_{\alpha}$, we need to find an efficient way of choosing our Markovian hypercube $h(\omega)$. Generally speaking one should try to choose the Markovian hypercube $h(\omega)$, in such a way that we can easily move around the hypercube. This will increase the potential exit points and make it easier to verify $(K)_{\alpha}$. Surprisingly one should not choose the hypercube from which it is the easiest to exit (see Section~\ref{sect_casoule}).

The choice of $\alpha_x(\omega)$ is supposed to reflect how easy it is to exit the hypercube $h(\omega)$ by the corner $x+x_0(\omega)$. The choice of $\alpha_x(\omega)=0$ means that we essentially ignore the possibility of exiting the hypercube in that corner.

In order to illustrate how to apply the criterion $(K)_{\alpha}$, we are going to show that $(K)_{\alpha}$ is more general than the current best criterion for positive speed (see~\cite{BRS}). This will be done in two parts. Firstly, we take the criterion for positive speed exhibited in~\cite{BRS} and show that it implies $(K_1)$. Secondly, we will provide an example which verifies $(K)_1$ but no former criterion.

{\bf Extending previous results}

In~\cite{BRS}, the authors introduced the following condition called $(E')_1$: there exists $\{\phi(e), e\in U\}\in (0,\infty)^{2d}$ such that
\begin{enumerate}
\item $2\sum_{e\in U} \phi(e)-\sup_{e\in U} (\phi(e)+\phi(-e))>1$,
\item for every $e\in U$ we have that
\[
{\bf E}\Bigl[\exp\Big(\sum_{e'\neq e} \phi(e')\log \frac 1{p^{\omega}(0,e')}\Big)\Bigr]<\infty,
\]
\end{enumerate}

It is shown (see Theorem 2 in~\cite{BRS}) that under $(E')_1$ the walk is ballistic provided the conditions $(P)_M$ and $(E)_0$ are verified. 

Our goal here is to show that the ellipticity condition we present in this paper is more general than those of~\cite{CR} and~\cite{BRS}.
\begin{proposition}\label{propmoreg}
Any random environment verifying the condition $(E')_1$ also verifies $(K)_1$.
\end{proposition}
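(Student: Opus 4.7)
The plan is to construct, from the weights $\phi$ provided by $(E')_1$, the exponents $\gamma_x$ and a marked Markovian hypercube $(h(\omega), (\alpha_x(\omega))_{x \in \mathfrak{H}})$ satisfying each of the three parts of $(K)_1$.

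For part $(1)$ of $(K)_1$, set $\gamma_x = \sum_{z \in \partial_x \mathfrak{H}} \phi(z-x)$ for every $x \in \mathfrak{H}$. Since $Q_x^{\mathfrak{H}} \geq p^\omega(x, z-x)$ for each $z \in \partial_x \mathfrak{H}$, the exponent splits as
\[
(Q_x^{\mathfrak{H}})^{-\gamma_x} = \prod_{z \in \partial_x \mathfrak{H}} (Q_x^{\mathfrak{H}})^{-\phi(z-x)} \leq \prod_{z \in \partial_x \mathfrak{H}} p^\omega(x, z-x)^{-\phi(z-x)}.
\]
Because $|\partial_x \mathfrak{H}| = d < 2d = |U|$, at least one direction $e_* \in U$ lies outside $\{z - x : z \in \partial_x \mathfrak{H}\}$, and part $(2)$ of $(E')_1$ together with the translation invariance of ${\bf P}$ yields ${\bf E}[(Q_x^{\mathfrak{H}})^{-\gamma_x}] < \infty$.

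For part $(2)$ of $(K)_1$, I let the Markovian hypercube $h(\omega)$ depend on $p^\omega(0, \cdot)$: for each coordinate $i$, choose a sign $\sigma_i(\omega) \in \{\pm 1\}$ (with a deterministic tie-breaking rule) so that in the hypercube $h(\omega) = \{\sum_i \epsilon_i \sigma_i(\omega) e_i : \epsilon \in \{0,1\}^d\}$ the larger of $\{p^\omega(0, e_i), p^\omega(0, -e_i)\}$ appears as an exit direction from $0$. Such $h(\omega)$ contains $0$, and its orientation is $p^\omega(0, \cdot)$-measurable, so $h$ is Markovian. For each $x \in \mathfrak{H}$, the mark $\alpha_x(\omega)$ is defined to match the $\phi$-weights along a specific path from $0$ to $x_0(\omega)+x$ within $h(\omega)$ followed by a one-step exit. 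Lower-bounding $\widetilde{Q}^{h(\omega)}_{0, x_0(\omega)+x}$ by this path-and-exit probability, multiplying over $x$, and exploiting independence of the environment across vertices of $h(\omega)$, the required expectation factorizes vertex-by-vertex. At each vertex $y \in h(\omega)$, the paths and exit are arranged so that at least one direction in $U$ is never used, enabling the application of part $(2)$ of $(E')_1$ with this direction as $e_*$.

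For part $(3)$ of $(K)_1$, the adaptive choices of $\sigma_i(\omega)$ and $\alpha_x(\omega)$ are made so that
\[
\sum_{x \in \mathfrak{H}} (\gamma_x \wedge \alpha_x(\omega)) \geq 2 \sum_{e \in U} \phi(e) - \max_i (\phi(e_i) + \phi(-e_i)),
\]
which strictly exceeds $1$ by part $(1)$ of $(E')_1$, providing the required $\epsilon > 0$.

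The main obstacle is the combinatorial bookkeeping: the paths and exits must be chosen so that at each vertex of $h(\omega)$ at least one direction in $U$ is omitted (to permit part $(2)$ of $(E')_1$), while the total weight still matches the expression appearing in part $(1)$ of $(E')_1$. The freedom coming from the adaptive Markovian hypercube orientation and from allowing $\omega$-dependent weights $\alpha_x$ is precisely what allows these two constraints to be simultaneously satisfied.
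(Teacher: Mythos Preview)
Your treatment of part $(1)$ is correct and coincides with the paper's choice $\gamma_x=\sum_{z\in\partial_x\mathfrak H}\phi(z-x)$. The difficulty, as you yourself identify, is the ``combinatorial bookkeeping'' in parts $(2)$ and $(3)$, and this is precisely the step you do not carry out. You declare that ``the mark $\alpha_x(\omega)$ is defined to match the $\phi$-weights along a specific path'' and that ``at each vertex \dots\ at least one direction in $U$ is never used'', and then assert the identity $\sum_x(\gamma_x\wedge\alpha_x(\omega))\ge 2\sum_e\phi(e)-\max_i(\phi(e_i)+\phi(-e_i))$; but you never say which paths, which marks, or why the omitted-direction constraint and the total-weight constraint are simultaneously achievable. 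Since the entire content of the proposition is this construction, the proposal is a plan rather than a proof.

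There is also a concern with your hypercube orientation. You choose $h(\omega)$ so that, for each axis $i$, the larger of $p^\omega(0,\pm e_i)$ is an \emph{exit} direction from $0$; the directions pointing \emph{into} the cube from $0$ are then the small ones. But to lower-bound $\widetilde Q^{h(\omega)}_{0,y}$ for any corner $y\neq 0$ you must first travel inside the cube from $0$, and those first steps now carry no uniform lower bound. The paper makes the opposite choice: it fixes $\delta\in(0,1/(2d))$, takes the first $k$ with $p^\omega(0,e_k)\ge\delta$ (one always exists), and orients $h(\omega)$ so that $e_k$ lies \emph{inside} the cube. This gives a single ``pivot edge'' $\{0,e_k\}$ that can be crossed at cost $\ge\delta$. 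Only the $2d$ corners adjacent to this edge receive nonzero marks: the two endpoints $v_0=0$ and $v_d=e_k$ get $\alpha=\gamma$, while each other neighbour $v_i$ of $0$ (resp.\ $u_i$ of $e_k$) in $h(\omega)$ gets $\alpha=\phi(v_i-v_0)$ (resp.\ $\phi(u_i-v_d)$). The lower bounds on $\widetilde Q$ come from the explicit one- or two-step paths of Figure~\ref{dessin_E}: exit directly from $0$ or $e_k$, or step once to a neighbour and repeat that step to exit. At every vertex involved, exactly the direction $e_k$ or $-e_k$ is omitted, so part $(2)$ of $(E')_1$ applies after using independence across vertices; and one checks $\sum_x(\gamma_x\wedge\alpha_x(\omega))=2\sum_e\phi(e)-(\phi(e_k)+\phi(-e_k))>1+\epsilon$. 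This explicit pivot-edge construction is exactly what your sketch is missing.
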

 \begin{proof}
Assume that  there exists $\{\phi(e), e\in U\}\in (0,\infty)^{2d}$ such that $(E')_1$ holds. Then there exists $\epsilon>0$ such that
\begin{enumerate}
\item we have
\begin{equation}\label{eps}
2\sum_{e\in U} \phi(e)-\sup_{e\in U} (\phi(e)+\phi(-e))>1+\epsilon,
\end{equation}
\item for every $e\in U$ we have that
\begin{equation}\label{zzz}
{\bf E}\Bigl[\exp\Bigl(\sum_{e'\neq e} \phi(e')\log \frac 1{p^{\omega}(0,e')}\Bigr)\Bigr]<\infty,
\end{equation}
\end{enumerate}

Let us check that we can verify the three conditions of $(K)_1$ (defined at Definition~\ref{def_k}). This will prove our proposition.

\vspace{0.5cm}

{\it First condition}

\vspace{0.5cm}

The first point $(1)$ of Definition \ref{def_k} of $(K)_1$ holds by choosing for any $x\in\mathfrak{H}$
\begin{align}\label{res1}
\gamma_x=\sum_{e\in U: x+e\in\partial_x \mathfrak{H}} \phi(e),
\end{align}
because of property~(\ref{zzz}).

\vspace{0.5cm}

{\it The definition of the Markovian hypercube and the third condition}

\vspace{0.5cm}

Now, let us construct a marked Markovian hypercube $(h(\omega),(\alpha_x(\omega))_{x\in \mathfrak{H}})$ (see Section \ref{sect_markhyp}) fulfilling condition $(K)_1$.

Recall the definition of $\{e_1,...,e_d\}$ in page \pageref{e1ell} and $\mathfrak{H}_0$  at \eqref{def_hx}. Fix $\delta\in(0,1/(2d))$ and define the event
\[
A_1=\left\{p^\omega(0,e_1)\ge\delta\right\},
\]
then recursively, for all $k\in\{2,...,2d\}$,
\[
A_k=\left\{p^\omega(0,e_k)\ge\delta\right\}\setminus\left(\bigcup_{i=1}^{k-1}A_i\right),
\]
so that $(A_k)_{1\leq k\leq 2d}$ forms a partition of $\Omega$.

Now, we define a Markovian hypercube $h(\omega)$ such that $h(\omega)=\mathfrak{H}_0$ on $A_k$ for all $k\in\{1,...,d\}$, and $h(\omega)=\mathfrak{H}_{(-1,...,-1)}$ on $A_k$ for all $k\in\{1+d,...,2d\}$. Recall the definition $\eqref{def_x0}$ of $x_0(\omega)$ and notice that either $x_0(\omega)=0$ or $x_0(\omega)=(-1,...,-1)$.\\

Let us work on the event $A_k$, for some $k\in\{1,...,2d\}$. We will now label some vertices of the hypercube $h(\omega)$. Firstly, let $v_0^{(k)}=0$ be the origin. This vertex $v_0^{(k)}$ has $d$ neighbors in $h(\omega)$: let us call them $v_1^{(k)},...,v_d^{(k)}$ such that $v_d^{(k)}=e_k$. Notice that, on the event $A_k$, $p^\omega(v_0^{(k)},v_d^{(k)})=p^\omega(0,e_k)\ge\delta$. 

The vertex $v_d^{(k)}$ has also $d$ neighbors in $h(\omega)$, one of them is $v_0^{(k)}$ and let us call $u_1^{(k)},...,u_{d-1}^{(k)}$ the other neighbors (which are separate from the $v^{(k)}$'s). Note that, all these vertices are not random (their definition only depends on $k$).\\

Let us describe ways to exit the hypercube $h(\omega)$ that will provide lower bounds on quantities of the type $\widetilde{Q}_{0,x_0(\omega)+x}^{h(\omega)}$ (see Definition~\ref{def_k}). First, we have to go out of the edge $\{v_0^{(k)},v_d^{(k)}\}=\{0,e_k\}$ (recall that $p^\omega(0,e_k)\ge\delta$ on $A_k$), using one of the vector that points out of this edge. There are two possibilities
\begin{enumerate}
\item if this vector makes us exit $h(\omega)$, we have reached our goal (exiting $h(\omega)$),
\item if  this vector leads us to another point of the hypercube, we just go on the same direction for one more step, which makes us exit the hypercube $h(\omega)$ (see Figure \ref{dessin_E}).
\end{enumerate}

\begin{figure}[h]
   \includegraphics{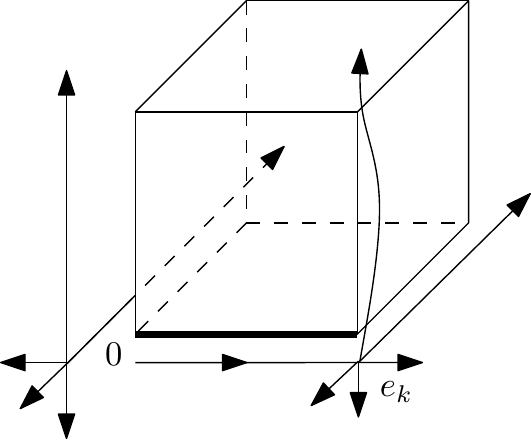}
      \caption{\label{dessin_E} The arrows represent the different strategies for the walker to exit the hypercube efficiently under condition $(E')_1$, starting at $0$. On the event $A_k$, the bold edge can be crossed from $0$ to $e_k$ with lower bounded probability, the other arrows may be hard to cross individually but as a group they provide a sufficient accessible escape route.}

\end{figure}

 There are many more ways to exit the hypercube but we will not need them, indeed our lower bounds on the quantities of the type $\widetilde{Q}_{0,x_0(\omega)+x}^{h(\omega)}$ (see Definition~\ref{def_k}) will be sufficient.

This intuition will guide us in our choice of $\alpha_x$. We labeled $2d$ vertices of $h(\omega)$ among $2^d$. For any point $x\in h(\omega)$ such that $x\notin\{v_0^{(k)},...,v_d^{(k)},u_1^{(k)},...,u_{d-1}^{(k)}\}$, we just choose the mark $\alpha_{x-x_0(\omega)}(\omega)=0$, where $x_0(\omega)$ is defined is Section~\ref{sect_markhyp}.\\

Recalling the definition of $\gamma_x$ at~(\ref{res1}), let us define
\begin{align*}
\alpha_{v_0^{(k)}-x_0(\omega)}(\omega)&=\gamma_{v_0^{(k)}-x_0(\omega)},\\
\alpha_{v_d^{(k)}-x_0(\omega)}(\omega)&=\gamma_{v_d^{(k)}-x_0(\omega)},
\end{align*}
as well as, for all $i\in\{1,...,d-1\}$,
\begin{align*}
\alpha_{v_i^{(k)}-x_0(\omega)}(\omega)&=\phi(v_i^{(k)}-v_0^{(k)})<\gamma_{v_i^{(k)}-x_0(\omega)},\\
\alpha_{u_i^{(k)}-x_0(\omega)}(\omega)&=\phi(u_i^{(k)}-v_d^{(k)})<\gamma_{u_i^{(k)}-x_0(\omega)}.
\end{align*}
Notice that, if we set $u_0^{(k)}=v_d^{(k)}(=e_k)$,
\begin{align*}
\sum_{i=0}^{d-1} \alpha_{v_i^{(k)}-x_0(\omega)}(\omega)&=\sum_{e\in U}\phi(e)-\phi(e_k),\\
\sum_{i=0}^{d-1} \alpha_{u_i^{(k)}-x_0(\omega)}(\omega)&=\sum_{e\in U}\phi(e)-\phi(-e_k).
\end{align*}

Hence, by~(\ref{eps}), there exists $\epsilon>0$ such that for any $k\in\{1,...,2d\}$, on the event $A_k$ ,
\begin{align}\label{res2}
\sum_{x\in \mathfrak{H}} (\gamma_x \wedge \alpha_x(\omega))=2\sum_{e\in U}\phi(e)-\left(\phi(e_k)+\phi(-e_k)\right)> 1+\epsilon.
\end{align}

\vspace{0.5cm}

{\it Second condition for a Markovian hypercube}

\vspace{0.5cm}

We have described how to obtain lower bounds on $\widetilde{Q}_{0,x_0(\omega)+x}^{h(\omega)}$ for $x\in \mathfrak{H}$ in the paragraph describing the intuition behind our choice of $\alpha_x$. Thus, on the event $A_k$ for any $k\in\{1,...,2d\}$, using that $p^\omega(0,e_k)=p^\omega(0,v_d^{(k)}-v_0^{(k)})\ge\delta$, and recalling that $u_0^{(k)}=v_d^{(k)}=e_k$, we can see that
\begin{align*}
\widetilde{Q}_{0,v_0^{(k)}}^{h(\omega)} &\ge Q_{v_0}^{h(\omega)},\\
\widetilde{Q}_{0,v_d^{(k)}}^{h(\omega)}&\ge\delta Q_{v_d}^{h(\omega)},\\
\widetilde{Q}_{0,v_i^{(k)}}^{h(\omega)}&\ge p^\omega(v_0^{(k)},v_i^{(k)}-v_0^{(k)})p^\omega(v_i^{(k)},v_i^{(k)}-v_0^{(k)})) \text{, for  } i\in\{1,...,d-1\},\\
\widetilde{Q}_{0,u_i^{(k)}}^{h(\omega)}&\ge\delta p^\omega(u_0^{(k)},u_i^{(k)}-u_0^{(k)})p^\omega(u_i^{(k)},u_i^{(k)}-u_0^{(k)}))\text{, for } i\in\{1,...,d-1\}.
\end{align*}
Recalling that $\alpha_{x}(\omega)=0$ as soon as $(x_0(\omega)+x)\notin\{v_0^{(k)},...,v_d^{(k)},u_1^{(k)},...,u_{d-1}^{(k)}\}$, we deduce by regrouping the terms properly that:
\begin{align*}
\prod_{x\in \mathfrak{H}} \Bigl(\widetilde{Q}_{0,x_0(\omega)+x}^{h(\omega)}\Bigr)^{-\alpha_x(\omega)}&\le \delta^{-\sum_{e\in U:e\neq -e_k}\phi(e)}\prod_{e\in U, e\neq e_k} \left(p^\omega(v_0^{(k)},e)\right)^{-\phi(e)}\\
&\times\prod_{e\in U, e\neq -e_k} \left(p^\omega(u_0^{(k)},e)\right)^{-\phi(e)}\\
& \times \prod_{i=1}^{d-1}\left(p^\omega(v_i^{(k)},v_i^{(k)}-v_0^{(k)}))\right)^{-\phi(v_i^{(k)}-v_0^{(k)})}\\
& \times \prod_{i=1}^{d-1}\left(p^\omega(u_i^{(k)},u_i^{(k)}-u_0^{(k)}))\right)^{-\phi(u_i^{(k)}-u_0^{(k)})}.
\end{align*}

We can notice on the right-hand side of the previous equations we have ${\bf P}$-independence between the terms
\begin{enumerate}
\item $\prod_{e\in U, e\neq e_k} \left(p^\omega(v_0^{(k)},e)\right)^{-\phi(e)}$,
\item $\prod_{e\in U, e\neq -e_k} \left(p^\omega(u_0^{(k)},e)\right)^{-\phi(e)}$,
\item $p^\omega(v_i^{(k)},v_i^{(k)}-v_0^{(k)})$, for all $i\in [1,d-1]$,
\item $p^\omega(u_i^{(k)},u_i^{(k)}-u_0^{(k)})$, for all $i\in [1,d-1]$.
\end{enumerate}

Hence, for any $k\in\{1,...,2d\}$, the annealed expectation of this previous quantity is finite, by~(\ref{zzz}). Thus, using translation invariance, that
\begin{align*}
{\bf E}\left[\prod_{x\in \mathfrak{H}} \Bigl(\widetilde{Q}_{0,x_0(\omega)+x}^{h(\omega)}\Bigr)^{-\alpha_x(\omega)}\right]&= {\bf E}\left[\sum_{k=1}^{2d}\1{A_k}\prod_{x\in \mathfrak{H}}\Bigl(\widetilde{Q}_{0,x_0(\omega)+x}^{h(\omega)}\Bigr)^{-\alpha_x(\omega)}\right]\\
& \le 2d\delta^{-\sum_{e}\phi(e)} \prod_{k=1}^{2d}\left({\bf E}\left[\prod_{e\neq e_k} p^\omega(0,e)^{-\phi(e)}\right]\right)^{2d}\\
&<\infty.
\end{align*}

This concludes the proof, together with \eqref{res1} and \eqref{res2} since all the three parts of the definition of $(K)_1$ are verified.

 \end{proof}

{\bf An example that verifies $(K)_1$ but no former criteria for ballistic behavior}

We are now going to introduce an example in which we can verify $(K)_1$ but not $(E')_1$, showing that the ellipticity criterion  $(K)_1$ is more general. This example also satisfies condition $(E)_0$, and we will also prove that it verifies condition $(T)$ and thus directional transience.

Let us choose $T$ a random variable such that $2d+1\le T<\infty$ ${\mathbf P}$-a.s.,  $\ES[T^{\frac{1}{4d}}]=\infty$ and $\ES[T^{\frac{1}{8d}}]<\infty$. Furthermore, we introduce  an independent random variable $i_0$ that is uniform on $\{1,...,2d\}$.\\

Let us now define $p^\omega(0,\cdot)$ in terms of $T$ and $i_0$ as this will give us the transition probabilities for this walk. Let $\epsilon\in (\frac{1}{2d+1},\frac{2d}{2d+1})$  and set:
\begin{displaymath}
p^\omega(0,e_i)=\left\{
\begin{array}{ll}
\frac{1}{T} & \textrm{if }i=i_0,\\ 
\frac{1-\epsilon-\frac{\1{1\le i_0\le d}}{T}}{d-\1{1\le i_0\le d}}& \textrm{if } i\in\{1,...,d\}\setminus\{ i_0\},\\
\frac{\epsilon-\frac{\1{d+1 \le i_0\le 2d}}{T}}{d-\1{d+1\le i_0\le 2d}}& \textrm{if } i\in\{d+1,...,2d\}\setminus\{ i_0\}.
\end{array} \right.
\end{displaymath}

Let us denote ${\mathbf P}^{\text{expl}}[\cdot]$ the law of this environment. 
\begin{proposition}\label{new_example}
The environment ${\bf P}^{\text{expl}}[\cdot]$ verifies $(K)_1$ but does not verify $(E')_1$. Furthermore a RWRE in an environment given by ${\mathbf P}^{\text{expl}}[\cdot]$ verifies condition $(T)$ and $(E)_0$.
\end{proposition}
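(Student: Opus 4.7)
The proof has four parts: verifying $(E)_0$, $(T)^\ell$, $(K)_1$, and showing that $(E')_1$ fails. Assume without loss of generality $\epsilon<1/2$ (the case $\epsilon>1/2$ being symmetric under basis reversal; the value $\epsilon=1/2$ is excluded since then no direction of transience exists), and set $\ell=(e_1+\cdots+e_d)/\sqrt{d}$. For $(E)_0$, each transition $p^\omega(0,e)$ is bounded below by $\min(1/T,c_0)$ for a deterministic $c_0>0$ depending only on $\epsilon$ and $d$, so $\mathbf{E}[(p^\omega(0,e))^{-\eta}]\le c_0^{-\eta}+\mathbf{E}[T^\eta]<\infty$ for $\eta=1/(8d)$.

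For $(T)^\ell$, the key point is that the drift along $\ell$ is deterministic: regardless of $i_0(x)$ and $T(x)$, at every vertex $\sum_{i=1}^d p^\omega(x,e_i)=1-\epsilon$ and $\sum_{i=1}^d p^\omega(x,-e_i)=\epsilon$, hence $\mathbf{E}^\omega[(X_{n+1}-X_n)\cdot\ell\mid\mathcal{F}_n]=(1-2\epsilon)/\sqrt{d}>0$ almost surely, while $|(X_{n+1}-X_n)\cdot\ell|\le 1/\sqrt{d}$. A standard exponential martingale argument applied to $\exp(-\lambda X_n\cdot\ell)$ for $\lambda>0$ small enough produces a quenched supermartingale, and optional stopping at $T^{\text{ex}}_{U_b^\ell(L)}$ yields $\PR[X_{T^{\text{ex}}}\cdot\ell<0]\le e^{-cL}$ for some $c>0$. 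For $\ell'\in S^{d-1}$ close enough to $\ell$, the drift along $\ell'$ remains bounded below by a positive constant uniformly in $\omega$, so the same argument gives $(T)_1^\ell$.

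For $(K)_1$, take the deterministic Markovian hypercube $h(\omega)\equiv\mathfrak{H}_0=\{0,1\}^d$, obtained by enumerating its corners in a Gray-code type sequence where consecutive elements are adjacent in $\Z^d$; then $x_0(\omega)=0$. Set $\gamma_x=2$ for every $x\in\mathfrak{H}$, $\alpha_0(\omega)=1+\epsilon'$ for a small fixed $\epsilon'>0$, and $\alpha_x(\omega)=0$ for $x\neq 0$. Part (1) is immediate: at any vertex at most one of the $2d$ outgoing probabilities equals $1/T$, while the other external-direction probabilities are bounded below by a deterministic constant depending only on $\epsilon$ and $d$; hence $Q_x^{\mathfrak{H}}$ is bounded away from zero uniformly and $\mathbf{E}[(Q_x^{\mathfrak{H}})^{-\gamma_x}]<\infty$. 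For part (2), from $0$ the walk exits $\mathfrak{H}_0$ in one step through any $-e_i\in\partial_0\mathfrak{H}_0$, so $\tilde{Q}^{\mathfrak{H}_0}_{0,0}\ge\sum_{i=1}^d p^\omega(0,-e_i)=\epsilon$, whence
\[
\mathbf{E}\Bigl[\prod_{x\in\mathfrak{H}}\bigl(\tilde{Q}^{\mathfrak{H}_0}_{0,x}\bigr)^{-\alpha_x(\omega)}\Bigr]=\mathbf{E}\bigl[\bigl(\tilde{Q}^{\mathfrak{H}_0}_{0,0}\bigr)^{-(1+\epsilon')}\bigr]\le\epsilon^{-(1+\epsilon')}<\infty.
\]
Part (3) then gives $\sum_x\gamma_x\wedge\alpha_x(\omega)=\min(2,1+\epsilon')=1+\epsilon'>1$ almost surely.

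To show $(E')_1$ fails, suppose $\phi:U\to(0,\infty)$ satisfies condition (2) of $(E')_1$ for every $e\in U$. Fix any $k\in\{1,\ldots,2d\}$ and choose any $j\neq k$. On the event $\{i_0=k\}$ of probability $1/(2d)$, one has $(p^\omega(0,e_k))^{-\phi(e_k)}=T^{\phi(e_k)}$, while the remaining factors in $\prod_{e'\neq e_j}(p^\omega(0,e'))^{-\phi(e')}$ are bounded by deterministic constants (since those probabilities take the form $(1-\epsilon-1/T)/(d-1)$, $(\epsilon-1/T)/(d-1)$, $\epsilon/d$, or $(1-\epsilon)/d$, all bounded below uniformly in $T$). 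Condition (2) for $e_j$ thus forces $\mathbf{E}[T^{\phi(e_k)}]<\infty$, which combined with $\mathbf{E}[T^{1/(4d)}]=\infty$ yields $\phi(e_k)<1/(4d)$. Varying $k$, we obtain $\phi(e)<1/(4d)$ for all $e\in U$, so $2\sum_{e\in U}\phi(e)-\sup_{e\in U}(\phi(e)+\phi(-e))<2\cdot(1/2)-0=1$, contradicting condition (1) of $(E')_1$. The only real technical step is the verification of $(T)^\ell$, made transparent by the deterministic drift; the rest follows easily from choosing the deterministic Markovian hypercube $\mathfrak{H}_0$ and exploiting that the walk escapes in one backward step with probability at least $\epsilon$.
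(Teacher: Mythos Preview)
Your proof is correct and follows essentially the same approach as the paper. All four parts—$(E)_0$, $(T)^\ell$, $(K)_1$, and the failure of $(E')_1$—are handled with the same key ideas: the deterministic projection $X_n\cdot(e_1+\cdots+e_d)$ is a biased simple random walk, the deterministic hypercube $\mathfrak{H}_0$ suffices because $Q_x^{\mathfrak{H}}$ is uniformly bounded below, and $(E')_1$ fails because any $\phi$ satisfying part~(2) must have every $\phi(e)<1/(4d)$. Your contrapositive presentation of the $(E')_1$ argument (``(2) for all $e$ $\Rightarrow$ (1) fails'') is logically equivalent to the paper's (``(1) $\Rightarrow$ some $\phi(e_0)\ge 1/(4d)$ $\Rightarrow$ (2) fails''). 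One minor expository remark: in the $(E')_1$ step, the observation that the non-$e_k$ factors are ``bounded by deterministic constants'' is unnecessary—since each $p^\omega(0,e')\le 1$, every factor $(p^\omega(0,e'))^{-\phi(e')}\ge 1$, and that alone gives $\prod_{e'\ne e_j}(\cdot)\ge T^{\phi(e_k)}$ on $\{i_0=k\}$, which is the inequality you actually need.
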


\begin{proof}
Note that $\ES[T^{\frac{1}{8d}}]<\infty$ ensures that $(E)_0$ holds. Let us prove that the walk is directionally transient and verifies conditions $(T')$.\\

The transition probabilities of ${\bf P}^{\text{expl}}[\cdot]$ are such that the walk has a strong drift toward $\ell_0=e_1+...+e_d$, as soon as $\epsilon$ is small enough. Indeed, we have
\[
P^\omega_0\left[X_1\cdot \ell_0=1\right]=1-P^\omega_0\left[X_1\cdot \ell_0=-1\right]=1-\epsilon \qquad {\bf P}-\text{-a.s.}
\]
Thus, the process $(X_n\cdot l_0)_n$ is a random walk on $\mathbb{Z}$ in a deterministic environment such that it performs a jump toward the right with probability $1-\epsilon$. Therefore, it is now clear that the walk $X$ is transient towards $\ell_0$ and
\[
\frac{X_n\cdot \ell_0}{n}\longrightarrow 1-2\epsilon.
\]

\vspace{0.5cm}

{\it Verifying condition $(T)$}

\vspace{0.5cm}

We want to prove condition $(T)^{l_0}$ (see~\eqref{hyp_T}), this means we need a neighborhood of $\ell_0$. For this we consider $\ell_0'=\ell_0+\sum_{i=1}^{d}\epsilon_ie_i$, where, for all $i\in\{1,...,d\}$, $\epsilon_i\in(-\epsilon,\epsilon)$.  Notice that, for all $n\in\mathbb{N}$, $X_n\cdot \ell_0'=X_n\cdot\ell_0+\sum_{i=1}^d \epsilon_i (X_n\cdot e_i)$. Thus, obviously we have $X_n\cdot \ell_0'\ge X_n\cdot \ell_0-\epsilon n$.

 Now, using a standard large deviation type argument, we can show that for some $\lambda>0$ small enough, we have for all $M>0$
\[
\PR\left[X_n\cdot \ell_0'<-M\right] \le C e^{-\lambda (n+M)},
\]
to prove this inequality we use the fact that $\epsilon <1/5$. Furthermore, we get:
\begin{align*}
\PR_0\left[T_{-M}^{(X_n\cdot \ell_0')}<T_a^{(X_n\cdot \ell_0')}\right]&\le \sum_{n=0}^\infty \PR\left[X_n\cdot \ell_0'<-M\right]\le C\exp(-\lambda M),
\end{align*}
where for $x\in \Z$ we denote $T_x^{(X_n\cdot \ell_0')}$ denotes the first hitting time of $x$ by the walk $X_n\cdot \ell_0'$.

We can now check easily that condition $(T)$ is verified by choosing $a=L\ge1$ and $M=bL$, for some $b>0$, since
\[
\PR\left[T_{-bL}^{(X_n\cdot \ell_0)}<T_L^{(X_n\cdot \ell_0)}\right]\leq Ce^{-cL}.
\]

\vspace{0.5cm}

{\it Verifying that condition $(E')_1$ is not satisfied}

\vspace{0.5cm}

Let us prove that the condition $(E')_1$ is not satisfied. Indeed, for any family of real numbers $\{\phi(e), e\in U\}\in (0,\infty)^{2d}$ such that
\[
2\sum_{e\in U} \phi(e)-\sup_{e\in U} (\phi(e)+\phi(-e))>1,
\]
there exists $e_0\in U$ such that $\phi(e_0)\ge1/4d$. Then, we have
\begin{align*}
\ES\Bigl[\exp\bigl( \phi(e_0)\log \frac 1{p^{\omega}(0,e_0)}\bigr)\Bigr]&\ge \ES\left[\1{i_0=e_0}T^{\phi(e_0)}\right]\\
&\ge \frac{1}{2d} \ES\left[T^{\frac{1}{4d}}\right]=\infty,
\end{align*}
in particular this implies that $(E')_1$ is not satisfied.

\vspace{0.5cm}

{\it Verifying that condition $(K)_1$ is  satisfied}

\vspace{0.5cm}

On the other hand, let us prove that condition $(K)_1$ is verified. By the definition of ${\bf P}^{\text{expl}}[\cdot]$, for any $x\in\mathfrak{H}$, $Q_x^{\mathfrak{H}}\ge \epsilon/d$ ${\bf P}$-a.s.~and thus has any moments. At this point one could conclude the proof by using Remark~\ref{suff_cond}. 

For illustrative purposes, we shall also verify $(K)_{\alpha}$ for any given $\alpha>0$. We can choose, for all $x\in\mathfrak{H}$, $\gamma_x=\alpha+1$ for instance in order to verify the first property of $(K)_{\alpha}$. Then, we define a Markovian hypercube such that $h(\omega)=\mathfrak{H}$ ${\bf P}$-a.s.~so that it is in fact deterministic and $x_0(\omega)=0$ ${\bf P}$-a.s. Then, we choose $\alpha_0(\omega)=\alpha+1$ ${\bf P}$-a.s.~and, for any $x\in\mathfrak{H}\setminus\{0\}$, we fix $\alpha_x(\omega)=0$ $\PR$-a.s. This implies that
\[
{\bf E}\Bigl[\prod_{x\in \mathfrak{H}} \Bigl(\widetilde{Q}_{0,x_0(\omega)+x}^{h(\omega)}\Bigr)^{-\alpha_x(\omega)}\Bigr]\le {\bf E}\Bigl[\Bigl(Q_{0}^{\mathfrak{H}}\Bigr)^{-(\alpha+1)}\Bigr]<\infty,
\]
and we conclude with
\[
\sum_{x\in \mathfrak{H}} (\gamma_x \wedge \alpha_x(\omega))=\alpha+1.
\]

This means that $(K)_{\alpha}$ is verified for any $\alpha>0$ (with $\epsilon=1$).
\end{proof}

\subsection{Why do unit hypercubes appear?}\label{sect_hypercube}

Let us explain informally why traps can only exists if the walk can get trapped inside a hypercube. Intuitively, if there is a finite shape $\mathcal{S}$ in which the walk stays trapped, then every edge getting out of this edge has an abnormally small probability of being crossed (making this edge a rare one). If the \lq\lq corners\rq\rq~ of that shape were translated onto the hypercube $\mathfrak{H}$, using the i.i.d. character of the environment, we could create a trap inside $\mathfrak{H}$ (see Figure~\ref{dessin_shape}). This trap inside $\mathfrak{H}$ should typically be more likely to appear than the initial trap in $\mathcal{S}$ since we have diminished the number of atypically \lq\lq hard-to-cross\rq\rq~(thus rare) edges.

\begin{figure}[h]

   \includegraphics{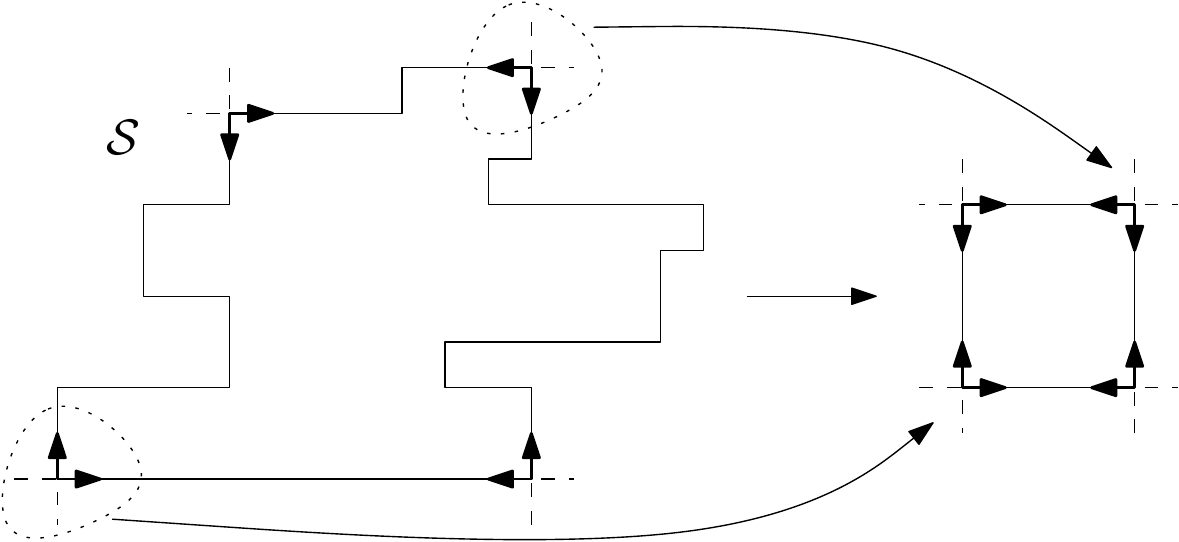}
   \caption{\label{dessin_shape} The ``corners'' of a shape $\mathcal{S}$ in $\mathbb{Z}^2$ translated onto a hypercube.}
\end{figure}

Conversely, we should also show that there are RWREs in elliptic i.i.d.~environments in $\Z^d$ with $d\geq 2$ that have zero speed but cannot be trapped on only one edge.\\

Let us choose $T$ a positive random variable verifying that ${\bf P}[T\leq \frac 1 {2}]=1$ and ${\bf P}[T^{-1}\geq n]\geq cn^{-1/2^d}$. Furthermore, we introduce an independent random variable $B_0$ which is uniform on the set of orthonormal basis of $\Z^d$ (i.e. $B_0$ is some orthonormal basis). 

We are going to define $p^{\omega}(0,\cdot)$ in terms of $T$ and $B_0$ as this will give us the law of the transition probabilities for our walk. We set
\begin{equation}\label{porb1}
p^{\omega}(0,e_i)=\begin{cases} \frac T {d} & \text{ if $e_i\in B_0$,} \\ \frac 1{d} -  \frac T {d}  & \text{ if $e_i\notin B_0$}. \end{cases} 
\end{equation}

In this model, we typically imagine that $T$ is small which means that the edges in $B_0$ are hard to cross whereas the others are not. It is obvious that the walk cannot get trapped on a single edge. Indeed, from every point there are at least $d$ edges which have probability at least $1/(2d)$ of being taken. 

On the other hand, the exit time of a hypercube has infinite expectation.
Indeed, let us introduce the random variables $B_0^{(x)}$, for all $x\in\mathbb{Z}^d$, which are distributed like $B_0$ such that they are all independent from each other. Similarly we introduce $T_0^{(x)}$, for any $x\in\mathbb{Z}^d$.

For $x\in \mathfrak{H}$, let us consider the event $\{B_0^{(x)}=\partial_x \mathfrak{H} ,\text{ for all $x\in \mathfrak{H}$}\}$. This event has a positive probability ($(2^{-d})^{2^d}$). On this event, from any point $x\in \mathfrak{H}$, we know that $P^{\omega}_x[X_1\notin \mathfrak{H}]\leq \max_{x\in \mathfrak{H}} \frac {T_0^{(x)}}d$. This implies that 
\[
\ES[T^{\text{ex}}_{\mathfrak{H}}] \geq c{\bf E}\Bigl[\text{Geom}\Bigl( \max_{x\in \mathfrak{H}} \frac {T_0^{(x)}}d\Bigr)\Bigr] =c {\bf E}\Bigl[\min_{x\in \mathfrak{H}} \frac 1{T_0^{(x)}}\Bigr].
\]

To see that the right-hand side is infinite, we simply compute the tail of $\min_{x\in \mathfrak{H}} T^{(x)}$,
\begin{equation}\label{non_integ_ex}
{\bf P}\Bigl[\min_{x\in \mathfrak{H}} \frac 1{T_0^{(x)}}\geq n\Bigr]={\bf P}[T^{-1}\geq n]^{2^d}\geq cn^{-1},
\end{equation}
which is non-integrable. This means that $\ES[T^{\text{ex}}_{\mathfrak{H}}] =\infty$. This indicates that trapping occurs. If the walk was directionally transient, then we could use Theorem~\ref{lb_tail} and prove that the walk is sub-ballistic, even though one single edge is not enough to trap a walk. Nevertheless, since the transition probabilities are symmetric, the walk is not directionally transient.

In order to address this issue, let us introduce the following similar model. Recall the notation $T$ and $B_0$ defined above~(\ref{porb1}). We will now define a RWRE in $\Z^{d+1}$. For this, let us point out that $B_0$ is a $d$-dimensional basis in a $(d+1)$-dimensional space such that  a.s.~$B_0\cap\{e_{d+1},e_{2(d+1)}\}=\emptyset$. Moreover, we have that a.s.~$\{e \in U,\ e\in B_0 \text{ or } -e\in B_0\}=U\setminus\{e_{d+1},e_{2(d+1)}\}$, where $U$ is the set of the $2(d+1)$ unit vectors of of $\mathbb{Z}^{d+1}$. After noticing this, we can set
\begin{equation}\label{porb1*}
q^{\omega}(0,e_i)=\begin{cases} \frac T {C(T,d)} & \text{ if $e_i\in B_0$,} \\ \frac 1{C(T,d)} -  \frac T {C(T,d)}  & \text{ if $-e_i\in B_0$,} \\    \frac {2T}{C(T,d)} &\text{ if $i=1+d$,}\\ \frac T{C(T,d)} &\text{ if $i=2(1+d)$.}\end{cases} 
\end{equation}
where $C(T,d)$ is a normalizing constant so that $q(0,\cdot)$ yields a probability transition. Since $T\leq 1/2$ a.s.~an elementary computation shows that $d\leq C(T,d)\leq 2(d+1)$.

Let us call ${\bf Q}^{\text{ex}}[\cdot]$ the law of the i.i.d.~environment arising from the previous construction.

\begin{figure}[h]

   \includegraphics{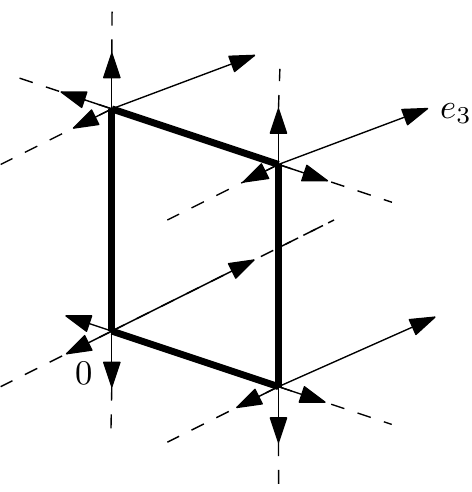}
   \caption{\label{dessin_trap} Transition probability around a square $\mathfrak{S}$ in $\mathbb{Z}^3$ on the event $\{B_0^{(x)}=\partial_x\mathfrak{S}\setminus\{e_{3},e_{6}\}\text{, for all } x\in\mathfrak{S}\}$. Bold edges are crossed with a lower bounded probability. Direction $e_3$ is preferred among those leading out of $\mathfrak{S}$.}
\end{figure}

\begin{proposition}\label{ex_nul}
Let $X_n$ be the RWRE in the environment given by ${\bf Q}^{\text{ex}}[\cdot]$. For any $d\geq 1$, it verifies that 
\begin{enumerate}
\item $X_n$ is transient in the direction $e_{1+d}$.
\item $X_n$ has zero velocity.
\end{enumerate}

If furthermore $d\geq 2$, then the walk $X_n$ is unlikely to localize on one edge in the sense that there exists $C(d)<\infty$ such that
\[
\lim_{n\to \infty} \PR[\text{there exists $i\in [0,n]$, such that }\abs{\{X_j,\ j\in[i,i+C(d)\ln n]\}} =2] = 0.
\]
\end{proposition}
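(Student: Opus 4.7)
My plan is to handle the three assertions separately, using: a martingale for the $e_{d+1}$-projection in (1), a $d$-dimensional sub-hypercube trap plugged into Theorem~\ref{lb_tail} in (2), and a pointwise bound on edge-transition probabilities in (3).

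For (1), I would let $Y_n=X_n\cdot e_{d+1}$. From the definition of $q^\omega$, conditional on $X_n=x$, the jump $Y_{n+1}-Y_n$ equals $+1,-1,0$ with quenched probabilities $2T_x/C_x$, $T_x/C_x$, $d/C_x$ respectively. A direct check shows $2^{-Y_n}$ is a quenched martingale. Conditional on $Y_{n+1}-Y_n\neq 0$, it equals $+1$ (resp.~$-1$) with probability $2/3$ (resp.~$1/3$) uniformly in $x,\omega$. Since $T>0$ a.s., a standard argument gives that infinitely many $\pm e_{d+1}$-steps occur, so the embedded one-dimensional walk is a biased simple random walk with drift $1/3$, and $Y_n\to+\infty$ a.s.

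For (2), by (1) and Theorem~\ref{lb_tail} it is enough to verify $(H)_1$. The naive use of the full $(d+1)$-dimensional hypercube $\mathfrak{H}$ as in~\eqref{non_integ_ex} fails: $|\mathfrak{H}|=2^{d+1}$ together with ${\bf P}[T^{-1}\geq n]\geq c n^{-1/2^d}$ only produces the integrable bound ${\bf P}[\min_{x\in\mathfrak{H}}T_x^{-1}\geq n]\geq c n^{-2}$. I would instead use the $d$-dimensional face $\mathfrak{S}=\{(\epsilon_1,\ldots,\epsilon_d,0):\epsilon_i\in\{0,1\}\}\subset\mathfrak{H}$ and the event $A_{\mathfrak{S}}$ on which, for every $x\in\mathfrak{S}$, the basis $B_0^{(x)}$ equals the set of outward-pointing unit vectors of $\mathfrak{S}$ at $x$ lying in $\{\pm e_1,\ldots,\pm e_d\}$. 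This event has positive probability and is independent of $(T_x)_{x\in\mathfrak{S}}$. On $A_{\mathfrak{S}}$ the total quenched exit probability from any $x\in\mathfrak{S}$ is at most $(d+3)T_x/C_x$, so $E_x^\omega[T^{\text{ex}}_{\mathfrak{S}}]\geq c\,\min_{x\in\mathfrak{S}}T_x^{-1}$ by the standard geometric bound. Since $|\mathfrak{S}|=2^d$, the tail assumption on $T$ yields ${\bf P}[\min_{x\in\mathfrak{S}}T_x^{-1}\geq n]\geq c/n$, hence ${\bf E}[\min_{x\in\mathfrak{S}} T_x^{-1}]=\infty$. Using $\mathfrak{S}\subset\mathfrak{H}$ so that $T^{\text{ex}}_{\mathfrak{S}}\leq T^{\text{ex}}_{\mathfrak{H}}$, this produces $\ES_x[T^{\text{ex}}_{\mathfrak{H}}]=\infty$ for $x\in\mathfrak{S}$, which establishes $(H)_1$.

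For (3), the key pointwise inequality, valid for every edge $\{x,y\}$ and every $\omega$, is
\[
q^\omega(x,y-x)\cdot q^\omega(y,x-y)\leq \frac{1}{d^2}.
\]
Indeed, using $T\leq 1/2$ a.s.~and $C(T,d)=d+3T\geq d$: if $y-x\notin\{\pm e_{d+1}\}$ each factor is at most $(1-T)/C\leq 1/d$, whereas if $y-x=\pm e_{d+1}$ one factor is $2T/C\leq 1/d$ and the other is $T/C\leq 1/(2d)$. Since the model has no self-loops, the event $|\{X_j,\ j\in[i,i+k]\}|=2$ forces $(X_j)_{j\in[i,i+k]}$ to strictly alternate on some edge $\{x,y\}$ for $k$ steps, a trajectory of quenched probability at most $d^{-k+1}$. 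Conditioning on $X_i$, a union bound over the $2(d+1)$ possible $y$ and then over $i\in[0,n]$, with $k=\lfloor C(d)\ln n\rfloor$ and $C(d)>2/\ln d$ (possible since $d\geq 2$), gives the desired vanishing bound. The main subtlety throughout the argument is the choice of the correctly-dimensional trap in (2); the exponents $|\mathfrak{S}|=2^d$ and $1/2^d$ are tuned so that $\mathbf{E}[\min T_x^{-1}]$ diverges at exactly the critical rate.
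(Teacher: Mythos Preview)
Your proof is correct and follows essentially the same route as the paper: the time-changed $2$-biased walk for (1), the $d$-dimensional face $\mathfrak{S}$ of the $(d{+}1)$-hypercube plugged into Theorem~\ref{lb_tail} for (2), and a pointwise quenched bound followed by a union bound for (3). The only cosmetic difference is in (3), where the paper observes that from every vertex at least $d$ edges carry probability $\geq 1/(4(d+1))$ (so for $d\geq 2$ at least one of them leaves any given edge), whereas you use the product bound $q^\omega(x,y-x)\,q^\omega(y,x-y)\leq 1/d^2$; both yield the same geometric decay and the same union-bound conclusion.
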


\begin{proof}
The directional transience follows immediately from the fact that $X_n\cdot e_{1+d}$ is a time-changed 2-biased random walk on $\Z$.

A computation similar to~(\ref{non_integ_ex}), proves that the annealed exit time of a hypercube is infinite. The directional transience and Theorem~\ref{lb_tail} imply that the asymptotic velocity is $0$.

We know that from every point there are at least $d$ edges which have probability at least $1/(4(d+1))$ of being taken. If $d\geq 2$, it can then easily be show that the time spend on one edge is stochastically upper-bounded by a geometric random variable or parameter $1/(4(d+1))$. The final point of our proposition therefore follows from a simple union bound.
\end{proof}

\begin{remark}
In fact, this walk also verifies condition $(P)_M^{e_{d+1}}$ for some $M\ge15d+5$, see Theorem $4$ of \cite{BRS}.
\end{remark}


\section{Proof of Theorem \ref{main} and Theorem \ref{cor_TCL}}\label{sect_thm_proof}


This section is dedicated to the proof of Theorem \ref{main} and Theorem \ref{cor_TCL} which state respectively the positive speed and central limit theorems for transient random walks in an elliptic i.i.d. environment satisfying $(E)_0$ (see \eqref{def_e}), the polynomial condition (see \eqref{def_pm}) and the ellipticity conditions $(K)_{\alpha}$, defined by Definition \ref{def_k}.

The following proposition gives an estimate on the tail of the regeneration time $\tau_1$, defined in Section \ref{sect_regen}.

\begin{proposition}\label{tailtau}
Let $\ell\in S^{d-1}$, $\alpha>0$ and $M\ge 15d+5$. Assume that $(P)_M^\ell$ is satisfied and that the ellipticity conditions $(E)_0$ and $(K)_\alpha$ hold (resp. defined in \eqref{def_e} and \eqref{def_k}). Then, there exists $\delta>0$ such that
\[
 \PR[\tau_1>u]\le Cu^{-(\alpha+\delta)}.
\]
\end{proposition}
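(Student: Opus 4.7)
The plan is to bound $\PR[\tau_1>u]$ by decomposing according to whether the walk has made substantial progress in the direction $\ell$ within some polylogarithmic time, and then to bound the contribution of the local trapping by combining the quenched reach estimate of Proposition~\ref{reachlog} with the moment condition in $(K)_\alpha$.

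First I would exploit the polynomial condition. Since $(P)_M^\ell$ together with $(E)_0$ implies $(T')^\ell$ by Theorem~\ref{PequivT} (i.e.~the main result of \cite{CR}), one has stretched-exponential control on the annealed probability that the walk exits a slab of size $L$ through the wrong face, and in particular the regeneration event $\{D=\infty\}$ occurs with probability bounded away from zero once the walk has advanced in direction $\ell$. This lets me introduce a scale $L=L(u)$ of order $(\log u)^c$ and reduce matters, up to a negligible remainder of order $\exp(-(\log u)^{\gamma})$, to the event that the walk stays in a box of side $L$ up to time $u$.

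Next, I use the key quenched estimate of Proposition~\ref{reachlog}: under $(K)_\alpha$, starting from any point one can reach a site at distance of order $\log u$ in direction $\ell$ with a quenched probability bounded below by some inverse polynomial of $\log u$. Partitioning the trajectory into epochs of length $L$, each epoch has, on any good environment, a lower bounded quenched probability $q(\omega)$ of exiting the current localized region through the front face, so the walk survives $u$ steps only if it returns to (a translate of) the origin a geometric number of times with parameter $q(\omega)$. Decomposing this geometric variable vertex by vertex, the number of visits to a given $x\in\mathfrak{H}_{x_0(\omega)}$ before exit is dominated by the reciprocal of the exit probability $\widetilde{Q}_{x,\cdot}^{h(\omega)}$, and the total exit time is bounded from above by a product of these reciprocals, up to polylogarithmic factors from Proposition~\ref{reachlog}.

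The last step is to convert this into a moment bound using $(K)_\alpha$. Since the Markovian hypercube $h(\omega)$ and its marks $(\alpha_x(\omega))_{x\in\mathfrak{H}}$ depend only on transition probabilities inside $h(\omega)$ (Remark~\ref{rem_markov}), they are independent of the environment governing the macroscopic excursions. Writing
\[
\tau_1 \;\le\; (\log u)^{C}\prod_{x\in\mathfrak{H}}\Bigl(\widetilde{Q}_{0,x_0(\omega)+x}^{h(\omega)}\Bigr)^{-\alpha_x(\omega)/(\alpha+\epsilon/2)}
\]
on the confined event, together with the first part of $(K)_\alpha$ (on the individual exit probabilities $Q_x^{\mathfrak{H}}$) to control environments where $h(\omega)$ is not sufficient by itself, an application of Markov's inequality with exponent $\alpha+\epsilon/2$ gives
\[
\PR[\tau_1>u]\;\le\; Cu^{-(\alpha+\epsilon/2)}(\log u)^{C'},
\]
and absorbing the logarithm into a slightly smaller $\delta>0$ yields the claim.

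The main obstacle I anticipate is the bookkeeping in the middle step: making the reduction from the global event $\{\tau_1>u\}$ to the local hypercube trap rigorous requires balancing three scales (the slab scale dictated by $(P)_M$, the polylogarithmic box scale in which Proposition~\ref{reachlog} operates, and the unit hypercube where $(K)_\alpha$ lives), and verifying that the adaptive choice of the Markovian hypercube $h(\omega)$ is compatible with the stopping times used in the decomposition without breaking the i.i.d.~independence exploited in part~(2) of Definition~\ref{def_k}.
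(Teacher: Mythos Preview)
Your proposal has a genuine gap: it omits the central mechanism that converts the local reachability estimate into control of the exit time from the logarithmic-scale box, namely the atypical quenched exit estimates (Proposition~\ref{atypical}). In the paper's proof, one first shows that on $\{T_{C_L}^{\text{ex}}>u\}$ there is a site $x_1\in C_L$ with $P_{x_1}^\omega[T_{C_L}^{\text{ex}}<T_{x_1}^+]$ of order at most $|C_L|L/u$, while Corollary~\ref{reachlog2} produces, except on an event of probability $\le |C_L|u^{-(\alpha+\delta)}$, a site $x_2$ at distance $\lfloor\eta\log u\rfloor$ with $P_{x_1}^\omega[T_{x_2}<T_{x_1}^+]\ge u^{-(\alpha+2\delta)/(\alpha+\epsilon)}$. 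These two quenched statements are then combined to show that the tilted box $B_{\beta,L}(x_i)$ (for $i=1$ or $i=2$, depending on the relative position of $x_1,x_2$) has quenched front-exit probability at most $e^{-cL^\beta}$; Proposition~\ref{atypical} then gives this event a ${\bf P}$-probability decaying like $\exp(-c(\log u)^{1+\epsilon'})$. Your sketch never invokes this atypical-exit machinery, and without it there is no bridge from ``$x_1$ is visited many times'' to a super-polynomial ${\bf P}$-bound.

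The displayed inequality you write,
\[
\tau_1 \le (\log u)^{C}\prod_{x\in\mathfrak{H}}\Bigl(\widetilde{Q}_{0,x_0(\omega)+x}^{h(\omega)}\Bigr)^{-\alpha_x(\omega)/(\alpha+\epsilon/2)},
\]
cannot be correct as a pointwise (or even confined-event) bound: the right-hand side depends only on the environment inside the single unit hypercube $h(\omega)$ around the origin, whereas $\tau_1>u$ can be caused by trapping at \emph{any} vertex in $C_L$, and the trap may be of logarithmic size, not of unit size. In the actual argument, the product appearing in $(K)_\alpha$ is used only inside the proof of Proposition~\ref{reachlog} (via Markov's inequality) to obtain the $u^{-(\alpha+\delta)}$ bound on the ${\bf P}$-probability of the bad reachability event; it is never compared directly to $\tau_1$. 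Your ``epoch'' decomposition also conflates two scales: Proposition~\ref{reachlog} gives a reachability lower bound of order $u^{-(\alpha+2\delta)/(\alpha+\epsilon)}$, which is far too small to iterate $u/L$ times and get anything useful; this is precisely why one must pass through the tilted-box exit estimates instead.
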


On the one hand, this Proposition implies positive speed (see Theorem \ref{main}) by using Theorem \ref{speed_regen}. On the other hand, it also implies the central limit theorems (see Theorem \ref{cor_TCL}), by using Theorem \ref{speed_regen} for the annealed case and the main result of Bouchet, Sabot and dos Santos \cite{BDSS} for the quenched case. Note that, for the latter, we need to prove that condition $(T)'$ is verified under our assumptions: as we have stated in Section \ref{sect_questions},  it has been shown in~\cite{CR} that under $(E)_0$ the polynomial condition $(P)_M$ is equivalent to $(T)'$. We properly state this result in Section~\ref{sect_ref_res} (see Theorem~\ref{PequivT}).\\

The goal is thus to prove Proposition \ref{tailtau}, which is done in Section \ref{sect_tailtau}.
The proof of Proposition \ref{tailtau} relies on two results of \cite{CR} which state that, under $(E)_0$, the polynomial condition is equivalent to condition $(T')$ (see Theorem \ref{PequivT}) and that some atypical quenched exit estimates hold (see Proposition \ref{atypical}).

Another key of the proof is that, under $(K)_1$, with great probability, the walker reaches some point sufficiently away with sufficiently large quenched probability. The exact meaning of this sentence will be clarified in Proposition \ref{reachlog} and Corollary \ref{reachlog2} in the following Section.\\
These three arguments allow us to derive an estimate on the tail of the regeneration time $\tau_1$, using arguments similar to those used by Sznitman in \cite{Szn01}.


\subsection{Attainability estimates}\label{sect_att}


Let us prove the following result which is needed for the proof of Theorem \ref{main} and Theorem~\ref{cor_TCL}. For this purpose, let us define the $2^d$ following paths starting at $0$ and reaching a point at distance $n$ by visiting at most $n+2^d$ vertices, and without coming back to $0$. We are going to use the marked Markovian hypercube $(h(\omega),(\alpha_x(\omega))_{x\in\mathfrak{H}})$ associated to condition $(K)_\alpha$ and his particular corner $x_0(\omega)$, defined in \eqref{def_x0}.

As we will see, these \emph{paths} are not really paths but rather unions of trajectories. Indeed, we construct these objects in two steps. The first step consists in (starting at $0$ and without coming back to $0$) going out of the Markovian hypercube reaching some neighbor $y$ of the hypercube. This does not define one path but a union of paths. For the second step, we construct an actual path that starts in $y$ and goes more or less straight away from $0$, without intersecting itself (see Figure~\ref{dessin_paths}). We allow ourselves to misname these objects and call them \emph{paths} as the important point is that they go from $0$ to some point that is far away without coming back to $0$. Moreover, even though we do not control the number of steps in these paths, we can upper bound the number of different points that they visit.\\

For any $x\in \mathfrak{H}$ and $n\in\mathbb{N}$, let  $\mathcal{Y}^{(n)}_x=(y_0^{(x)},...,y_n^{(x)})$ be the path constructed as follows:
\begin{enumerate}
\item the path starts at $y_0^{(x)}=0$;
\item\label{her} the path goes out of the marked Markovian hypercube $(h(\omega),(\alpha_z(\omega))_{z\in\mathfrak{H}})$, without coming back to $0$, and via a neighbour $y_1^{(x)}$ of $x_0(\omega)+x$ such that
\[
P^\omega_0\left[T_{\partial h(\omega)}< T_0^+, X_{T_{\partial h(\omega)}}=y_1^{(x)}\right]=\max_{y\in\partial_{x_0(\omega)+x} h(\omega)} P^\omega_0\left[T_{\partial h(\omega)}< T_0^+, X_{T_{\partial h(\omega)}}=y\right],
\]
and $y_1^{(x)}$ is chosen arbitrarily if several vertices realize this last equality;
\item the rest of the path is a nearest-neighbour path such that, for all $i\in\{1,...n-1\}$, $y_{i+1}^{(x)}$ is a neighbour of $y_i^{(x)}$ such that
\[
p^\omega(y_i^{(x)},y_{i+1}^{(x)}-y_i^{(x)})=\max_{e\in U: y_i^{(x)}+e\notin \mathfrak{H}_{y_i^{(x)}-x}}p^\omega(y_i^{(x)},e)= Q_{y_i^{(x)}}^{\mathfrak{H}_{y_i^{(x)}-x}},
\]
where $Q$ is defined in \eqref{def_Q},  $\mathfrak{H}_{y_i^{(x)}-x}$ is defined in \eqref{def_hx}, and $y_{i+1}^{(x)}$ is chosen arbitrarily if several vertices realize this last equality.
\end{enumerate}

See Figure \ref{dessin_paths} for a scheme of this construction. Note also that, for all $x\in\mathfrak{H}$, $y_0^{(x)}$ and $y_1^{(x)}$ are not necessarily neighbors.

Essentially, the path $\mathcal{Y}^{(n)}_x$ first goes out of the Markovian hypercube using the corner $x_0(\omega)+x$ and then continues in the same global direction for $n-1$ steps (see Figure \ref{dessin_paths}), such that it goes further from $0$ at each step, using the same orthonormal basis that points out of $h(w)$ from $x_0(\omega)+x$. Hence, for all $x\in\mathfrak{H}$, $|y_n^{(x)}|_1\ge n$.

\begin{figure}[h]
   \includegraphics{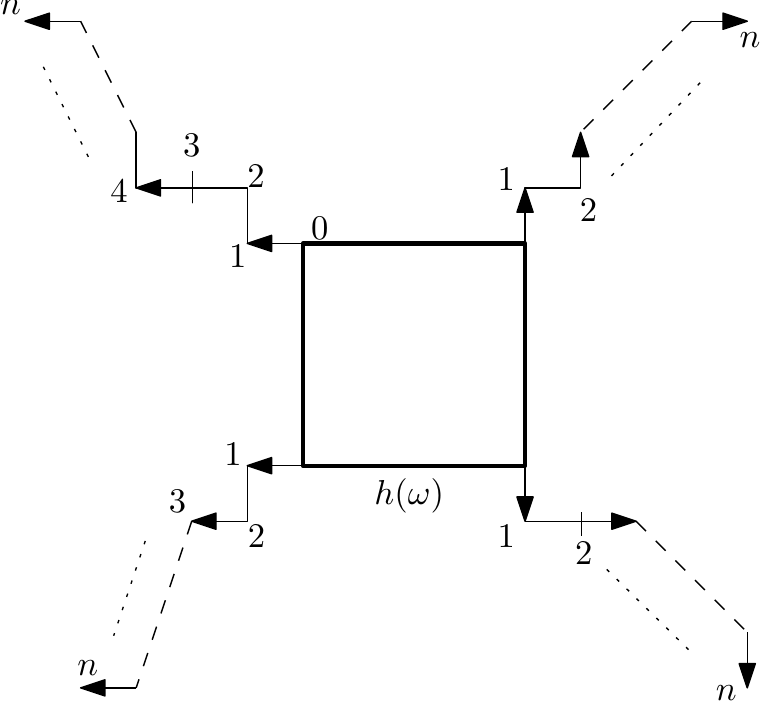}
      \caption{\label{dessin_paths} Chosen ``paths'' to reach a point at distance $n$.}

\end{figure}

Notice that once the paths $(\mathcal{Y}^{(n)}_x)_{x\in\mathfrak{H}}$, are out of the Markovian hypercube, they do not intersect. That fact is very helpful in the computations of the following Proposition \ref{reachlog}.\\

Even though $\mathcal{Y}^{(n)}_x$ is not a path, we call  quenched probability of the path $\pi_x^{(n)}$ of $\mathcal{Y}^{(n)}_x$ the following quantity
\begin{align}\label{def_pix}
\pi_x^{(n)}&=P^\omega_0\left[T_{\partial h(\omega)}< T_0^+, X_{T_{\partial h(\omega)}}=y_1^{(x)}\right]\prod_{i=1}^{n-1} Q_{y_i}^{\mathfrak{H}_{y_i-x}},
\end{align}
dropping the superscript ``$(x)$'' of the $y_i$'s for simplicity.

The next proposition states that, with high $\mathbf{P}$-probability, one of the paths depicted in Figure~\ref{dessin_paths} has a decent chance of being followed.

\begin{proposition}\label{reachlog}
Consider a RWRE in an elliptic environment satisfying condition $(K)_\alpha$, $\alpha>0$.
For any $\delta>0$, there exists a constant $\eta>0$, such that, for any $u$ large enough, we have
\[
{\bf P}\left[ \max_{x\in\mathfrak{H}} \pi_x^{(\lfloor \eta\log(u)\rfloor)}< u^{-\frac{\alpha+2\delta}{\alpha+\epsilon}}\right]\le\frac{1}{u^{\alpha+\delta}},
\]
where $\lfloor\cdot\rfloor$ is the floor function, $\pi_x^{(n)}$ is defined by \eqref{def_pix} and $\epsilon$ comes from condition $(K)_\alpha$.
\end{proposition}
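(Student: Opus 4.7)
The plan is to apply Markov's inequality to the random product $\prod_{x\in\mathfrak{H}}(\pi_x^{(n)})^{-\beta_x(\omega)}$ with the truncated exponents $\beta_x(\omega):=\gamma_x\wedge\alpha_x(\omega)$; this choice is what allows the three parts of $(K)_\alpha$ to cooperate. First, since $y_1^{(x)}$ is the most likely exit neighbor of the corner $x_0(\omega)+x$, and this corner has exactly $d$ outside neighbors,
\[
\pi_x^{(n)} \;\ge\; \frac{1}{d}\,\widetilde{Q}_{0, x_0(\omega)+x}^{h(\omega)}\,\prod_{i=1}^{n-1} Q_{y_i^{(x)}}^{\mathfrak{H}_{y_i^{(x)}-x}}.
\]
Using $\widetilde Q,Q\le 1$ together with $\beta_x(\omega)\le\alpha_x(\omega)$ and $\beta_x(\omega)\le\gamma_x$, this gives
\[
\prod_{x\in\mathfrak{H}}(\pi_x^{(n)})^{-\beta_x(\omega)} \;\le\; d^{\sum_x\gamma_x}\,\prod_{x\in\mathfrak{H}}\bigl(\widetilde{Q}_{0,x_0(\omega)+x}^{h(\omega)}\bigr)^{-\alpha_x(\omega)}\,\prod_{x\in\mathfrak{H}}\prod_{i=1}^{n-1}\bigl(Q_{y_i^{(x)}}^{\mathfrak{H}_{y_i^{(x)}-x}}\bigr)^{-\gamma_x}.
\]
On the event $\{\max_x\pi_x^{(n)}<s\}$ we moreover have $\prod_x(\pi_x^{(n)})^{-\beta_x(\omega)}\ge s^{-\sum_x\beta_x(\omega)}\ge s^{-(\alpha+\epsilon)}$ by part $(3)$ of $(K)_\alpha$, so Markov's inequality yields
\[
{\bf P}\bigl[\max_x\pi_x^{(n)}<s\bigr] \;\le\; s^{\alpha+\epsilon}\,{\bf E}\Bigl[\prod_x(\pi_x^{(n)})^{-\beta_x(\omega)}\Bigr].
\]

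Next I would exploit the independence structure built into the construction. For any $x\ne x'$ in $\mathfrak{H}$, pick a coordinate $j$ with $x_j\ne x'_j$: the path $(y_i^{(x)})_{i\ge 1}$ only moves in the direction $(2x_j-1)e_j$ along that coordinate, so the two branches stay strictly on opposite sides of $h(\omega)$ in coordinate $j$, hence are pairwise disjoint and also disjoint from $h(\omega)$. Since $(h(\omega),(\alpha_x(\omega))_{x\in\mathfrak{H}})$ is measurable with respect to $\{p^\omega(y,\cdot):y\in h(\omega)\}$ by Remark~\ref{rem_markov}, conditioning on this $\sigma$-algebra fixes $y_1^{(x)}$ for every $x$ and decouples the two products above. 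Applying the Markov property iteratively along each path (a fresh independent copy of $p^\omega(\cdot)$ is revealed at each newly visited vertex) together with translation invariance, each factor $Q_{y_i^{(x)}}^{\mathfrak{H}_{y_i^{(x)}-x}}$ is distributed as $Q_x^{\mathfrak{H}}$ and these factors are jointly independent. Part $(1)$ of $(K)_\alpha$ then gives
\[
{\bf E}\Bigl[\prod_x\prod_{i=1}^{n-1}\bigl(Q_{y_i^{(x)}}^{\mathfrak{H}_{y_i^{(x)}-x}}\bigr)^{-\gamma_x}\,\Big|\,\{p^\omega(y,\cdot):y\in h(\omega)\}\Bigr] \;=\; C^{n-1},\qquad C:=\prod_{x\in\mathfrak{H}}{\bf E}\bigl[(Q_x^{\mathfrak{H}})^{-\gamma_x}\bigr]\in[1,\infty),
\]
and combining with part $(2)$,
\[
{\bf E}\Bigl[\prod_x(\pi_x^{(n)})^{-\beta_x(\omega)}\Bigr] \;\le\; d^{\sum_x\gamma_x}\,K\,C^{n-1},\qquad K:={\bf E}\Bigl[\prod_x\bigl(\widetilde{Q}_{0,x_0(\omega)+x}^{h(\omega)}\bigr)^{-\alpha_x(\omega)}\Bigr]<\infty.
\]

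To conclude, I would set $s=u^{-(\alpha+2\delta)/(\alpha+\epsilon)}$ and $n=\lfloor\eta\log u\rfloor$, so that $s^{\alpha+\epsilon}=u^{-(\alpha+2\delta)}$ and $C^{n-1}\le u^{\eta\log C}$; choosing $\eta>0$ so that $\eta\log C<\delta$ (possible because $0\le\log C<\infty$) gives an upper bound of $u^{-(\alpha+2\delta)+\eta\log C}$ up to multiplicative constants, which is dominated by $u^{-(\alpha+\delta)}$ for $u$ large enough. The main technical obstacle will be to make rigorous the independence claims of the second paragraph, i.e.~simultaneously the pairwise disjointness of the $2^d$ argmax-branches and the fact that conditionally on the past each $Q_{y_i^{(x)}}^{\mathfrak{H}_{y_i^{(x)}-x}}$ really is a fresh sample with the same law as $Q_x^{\mathfrak{H}}$; both rely delicately on the construction of the paths outside $h(\omega)$. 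The truncation $\beta_x=\gamma_x\wedge\alpha_x(\omega)$ is the key device: it trades the random exponent $\alpha_x(\omega)$ for the deterministic $\gamma_x$ exactly where part $(1)$ must be applied to the environment along the paths, while preserving the total $\sum_x\beta_x(\omega)\ge\alpha+\epsilon$ required for Markov.
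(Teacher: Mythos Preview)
Your proposal is correct and follows essentially the same route as the paper's proof: both use the lower bound $\pi_x^{(n)}\ge\frac{1}{d}\widetilde{Q}_{0,x_0(\omega)+x}^{h(\omega)}\prod_{i}Q_{y_i^{(x)}}^{\mathfrak{H}_{y_i^{(x)}-x}}$, apply Markov's inequality with the truncated exponents $\gamma_x\wedge\alpha_x(\omega)$, factor the expectation using the independence of the paths outside $h(\omega)$, and conclude by choosing $\eta$ small. Your presentation is in fact slightly more careful in two places: you bound $\prod_x(\pi_x^{(n)})^{-\beta_x(\omega)}$ from below on the event before applying Markov (avoiding the need to check that $d\pi_x\le 1$), and you give an explicit coordinate argument for the pairwise disjointness of the branches.
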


\begin{proof}
Let us emphasize again some facts about the paths $\mathcal{Y}_x^{(n)}$, $x\in\mathfrak{H}$, $n\in\mathbb{N}$, and their quenched probability $\pi_x^{(n)}$:
\begin{enumerate}
\item out of the Markovian hypercube $h(\omega)$, the paths do not intersect, i.e. 
\[
\bigcap_{x\in\mathfrak{H}} \left\{y_i^{(x)},i\in\{1,...,n\}\right\}=\emptyset;
\]
\item for any $i\in\{1,...n-1\}$, conditionnally on $h(w)$ and $y_0^{(x)},...,y_i^{(x)}$, for all $x\in\mathfrak{H}$, the quenched probabilities $p^\omega(y_i^{(x)},y_{i+1}^{(x)}-y_i^{(x)})$ are independent and are distributed like the random variable $Q_{x}^{\mathfrak{H}}$ defined in \eqref{def_Q}. These independence properties rely on the fact that $h(\omega)$ is a Markovian hypercube, in particular here we use Remark \ref{rem_markov}.

\item using the definition \eqref{def_Qtilde} of $\widetilde{Q}$ and the property \eqref{her} of the construction page \pageref{her}, we have that, for any $x\in\mathfrak{H}$,
\[
P^\omega_0\left[T_{\partial h(\omega)}< T_0^+, X_{T_{\partial h(\omega)}}=y_1^{(x)}\right]\ge \frac{1}{d} \widetilde{Q}_{0,x_0(\omega)+x}^{h(\omega)},
\]
thus, using \eqref{def_pix},
\[
\pi_x^{(n)}\ge \frac{1}{d} \widetilde{Q}_{0,x_0(\omega)+x}^{h(\omega)}\prod_{i=1}^{n-1} Q_{y_i}^{\mathfrak{H}_{y_i-x}}.
\]
\end{enumerate}

Now, fix $\eta>0$ and $\delta>0$ and recall Definition~\ref{def_k} of the marks $(\alpha_x(\omega))_{x\in\mathfrak{H}}$ of the marked Markovian hypercube $(h(\omega),(\alpha_x(\omega))_{x\in \mathfrak{H}})$. Using Markov inequality, the condition $(K)_\alpha$ and the previous remarks, we have, for all $u>\exp(1/\eta)$,
\begin{align*}
{\bf P}\left[\max_{x\in\mathfrak{H}} \pi_x^{(\lfloor \eta\log(u)\rfloor)}< u^{-\frac{\alpha+2\delta}{\alpha+\epsilon}}\right]&\le \frac{d^{\alpha+\epsilon}}{u^{\alpha+2\delta}} {\bf E}\left[\min_{x\in\mathfrak{H}} \left(d\pi_x^{(\lfloor \eta\log(u)\rfloor)}\right)^{-(\alpha+\epsilon)}\right]\\
&\le \frac{d^{\alpha+\epsilon}}{u^{\alpha+2\delta}} {\bf E}\left[ \prod_{x\in\mathfrak{H}}\left(d\pi_x^{(\lfloor \eta\log(u)\rfloor)}\right)^{-(\alpha_x(\omega)\wedge\gamma_x)}\right]\\
\le   \frac{d^{\alpha+\epsilon}}{u^{\alpha+2\delta}} {\bf E}&\left[ \prod_{x\in\mathfrak{H}}\left(\left(\widetilde{Q}_{0,x_0(\omega)+x}^{h(\omega)}\right)^{-\alpha_x(\omega)}\prod_{i=1}^{\lfloor \eta\log(u)\rfloor-1}\left( Q_{y_i}^{\mathfrak{H}_{y_i-x}}\right)^{-\gamma_x}\right)\right]\\
&\le   \frac{d^{\alpha+\epsilon}}{u^{\alpha+2\delta}} {\bf E}\left[ \prod_{x\in\mathfrak{H}}\left(\widetilde{Q}_{0,x_0(\omega)+x}^{h(\omega)}\right)^{-\alpha_x(\omega)}\right]\\
&\qquad\qquad\times\prod_{x\in\mathfrak{H}}\left[{\bf E}\left[\left( Q_{x}^{\mathfrak{H}}\right)^{-\gamma_x}\right]\right]^{\lfloor \eta\log(u)\rfloor-1},
\end{align*}
where we used all previously mentioned independence properties. Introducing
\[
C=\max \left(\mathbf{E}\left[\prod_{x\in\mathfrak{H}}\left(\widetilde{Q}_{0,x_0(\omega)+x}^{h(\omega)}\right)^{-\alpha_x(\omega)}\right]; \mathbf{E}\left[\left( Q_{x}^{\mathfrak{H}}\right)^{-\gamma_x}\right], x\in\mathfrak{H}\right)<\infty.
\]
which is finite thanks to condition $(K)_\alpha$, we can see that
\[
{\bf P}\left[\max_{x\in\mathfrak{H}} \pi_x^{(\lfloor \eta\log(u)\rfloor)}< u^{-\frac{\alpha+2\delta}{\alpha+\epsilon}}\right]\le \frac{d^{\alpha+\epsilon}C^{\eta\log(u)}}{u^{\alpha+2\delta}}.
\]

Finally, for $\eta>0$ small enough and $u$ large enough,
\[
\mathbf{P}\left[\max_{x\in\mathfrak{H}} \pi_x^{(\lfloor \eta\log(u)\rfloor)}< u^{-\frac{\alpha+2\delta}{\alpha+\epsilon}}\right]\le \frac{1}{u^{\alpha+\delta}}.
\]

\end{proof}

The proof of the following consequence is straightforward.

\begin{corollary}\label{reachlog2}
Consider a RWRE in an elliptic environment satisfying condition $(K)_\alpha$, $\alpha>0$.
For any $\delta>0$, there exists a constant $\eta>0$, such that, for any $u$ large enough, we have
\[
\mathbf{P}\left[ \max_{y:|y|_1=\lfloor \eta\log(u)\rfloor} P^\omega_0\left[T_y<T^+_0\right]< u^{-\frac{\alpha+2\delta}{\alpha+\epsilon}}\right]\le\frac{1}{u^{\alpha+\delta}},
\]
where $\lfloor\cdot\rfloor$ is the floor function and $\epsilon>0$ comes from condition $(K)_\alpha$.
\end{corollary}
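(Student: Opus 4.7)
The plan is to derive Corollary~\ref{reachlog2} as a direct consequence of Proposition~\ref{reachlog}. The key observation is that, by its very definition in \eqref{def_pix}, $\pi_x^{(n)}$ is the quenched probability of one specific way of reaching $y_n^{(x)}$ from $0$ without returning to $0$ in between: first the walker exits the Markovian hypercube through $y_1^{(x)}$ avoiding $0$, and then it follows the deterministic nearest-neighbor path $y_1^{(x)},\dots,y_n^{(x)}$. Since every factor in the definition of $\pi_x^{(n)}$ lies in $[0,1]$, truncating the product at index $i^*$ shows that $P^\omega_0[T_{y_{i^*}^{(x)}} < T_0^+] \geq \pi_x^{(n)}$ whenever the prefix $y_1^{(x)},\dots,y_{i^*}^{(x)}$ does not pass through $0$.

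I would then apply Proposition~\ref{reachlog} with the given $\delta$ to obtain an $\eta>0$ such that, setting $n = \lfloor \eta \log u\rfloor$, with $\mathbf{P}$-probability at least $1-u^{-(\alpha+\delta)}$ there exists some $x\in\mathfrak{H}$ with $\pi_x^{(n)}\geq u^{-(\alpha+2\delta)/(\alpha+\epsilon)}$. On this event, the next step is to locate an index $i^*\in\{1,\dots,n\}$ along the path $\mathcal{Y}_x^{(n)}$ such that $|y_{i^*}^{(x)}|_1 = n$. This is where the particular construction of $\mathcal{Y}_x^{(n)}$ is exploited: after exiting the hypercube, every subsequent step is taken in the fixed signed orthonormal basis $\{v_k = (2x_k - 1)e_k\}_{k=1}^d$ that points outward from the corner $x$. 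A short case analysis on $x_0(\omega)\in\{-1,0\}^d$ and $x\in\{0,1\}^d$ shows that the coordinates of $y_1^{(x)}$ already have signs agreeing with those of the corresponding $v_k$'s, so along the tail of the path each coordinate evolves monotonically away from $0$. Consequently $|y_i^{(x)}|_1 = |y_1^{(x)}|_1 + (i-1)$ for every $i\ge 1$, in particular the prefix never revisits $0$, and since $1\leq |y_1^{(x)}|_1 \leq d+1$, for $u$ large enough a suitable $i^*$ exists.

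Setting $y = y_{i^*}^{(x)}$ and combining the two previous points yields, on the high-probability event,
\[
P_0^\omega\bigl[T_y < T_0^+\bigr] \;\geq\; \pi_x^{(i^*)} \;\geq\; \pi_x^{(n)} \;\geq\; u^{-(\alpha+2\delta)/(\alpha+\epsilon)},
\]
where the middle inequality uses that all the $Q$-factors between steps $i^*$ and $n$ are bounded by $1$. This is precisely the asserted lower bound on $\max_{|y|_1 = \lfloor \eta\log u\rfloor}P_0^\omega[T_y<T_0^+]$. I do not foresee any genuine obstacle here; the only point requiring a little care is the coordinate-monotonicity observation used to identify the intermediate vertex at distance exactly $\lfloor\eta\log u\rfloor$ and to guarantee that the prefix avoids $0$, consistent with the paper's assertion that the proof is straightforward.
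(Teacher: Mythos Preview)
Your proposal is correct and is exactly the kind of argument the paper has in mind: the paper states only that ``the proof of the following consequence is straightforward,'' i.e.\ that the corollary follows directly from Proposition~\ref{reachlog}, and your derivation spells out precisely this implication. The coordinate-monotonicity check (that $y_1^{(x)}$ already lies in the octant determined by the signed basis $\{(2x_k-1)e_k\}$, so that $|y_i^{(x)}|_1=|y_1^{(x)}|_1+(i-1)$, the prefix avoids $0$, and an index $i^*$ with $|y_{i^*}^{(x)}|_1=\lfloor\eta\log u\rfloor$ exists) is the one nontrivial verification, and you carry it out correctly.
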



\subsection{Polynomial condition and atypical quenched exit estimates}\label{sect_ref_res}


In this section, we just recall two results previously obtained by Campos and Ram\'irez in \cite{CR}. This uses conditions $(E)_0$, $(P)_M^\ell $ and $(T')$ defined respectively at~\eqref{def_e}, \eqref{def_pm} and~\eqref{hyp_T}.

This first theorem states that, under some light assumptions, the polynomial condition implies condition $(T')$.

\begin{theorem}[Theorem 1.1 of Campos and Ram\'irez, \cite{CR}]\label{PequivT}
Consider a random walk in an i.i.d. environment in dimensions $d\ge2$. Let $\ell\in S^{d-1}$ and $M\ge 15d+5$. Assume that the environment satisfies the ellipticity condition $(E)_0$. Then the polynomial condition $(P)_M^\ell$ is equivalent to $(T')^\ell$.
\end{theorem}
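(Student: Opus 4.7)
The equivalence has two directions, of which $(T')^\ell \Rightarrow (P)_M^\ell$ is considerably the easier one. The plan is to observe that $(T')^\ell$ provides stretched-exponential control on the probability of exiting a slab on the wrong side, which in particular dominates $L^{-M}$ for any fixed $M$ once $L$ is large enough. One then verifies that the probability appearing in \eqref{def_pm} for the finite box $B^\ell_{L,L',\widetilde L}$ is bounded above by a combination of wrong-side-slab-exit probabilities plus probabilities of exiting transversely, both of which are stretched-exponentially small in $L$ under $(T')^\ell$ for suitable $L'$ and $\widetilde L$ within the allowed ranges. So the heart of the theorem is the reverse direction $(P)_M^\ell \Rightarrow (T')^\ell$.

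For this direction, I would follow the multiscale renormalization philosophy pioneered by Sznitman and extended by Berger, Drewitz and Ram\'irez in the uniformly elliptic case. The plan is to iterate the polynomial estimate: pick a geometric sequence of scales $L_{k+1} \simeq L_k^a$ for some $a>1$, and at each step bootstrap a polynomial bound of order $M_k$ at scale $L_k$ into a strictly stronger polynomial bound of order $M_{k+1}>M_k$ at scale $L_{k+1}$. The inductive step decomposes a traversal of the larger box into a chain of traversals of overlapping smaller translates, invoking the Markov property at successive hitting times of intermediate slabs and bounding the number of intersected translates by a union bound. After finitely many iterations one obtains polynomial decay of any desired order, which upgrades to $(T)_\gamma^\ell$ for every $\gamma\in(0,1)$, and hence to $(T')^\ell$ in the sense of \eqref{hyp_T}.

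The main obstacle, and the genuine new content of~\cite{CR} compared to the uniformly elliptic case, is that at each renormalization step one wants a quenched lower bound on the probability of traversing an intermediate corridor, a bound which is immediate under~\eqref{hyp_UE} but fails under the weaker $(E)_0$. I expect this to be the hard part: one must show that the vertices at which the quenched transition probabilities are anomalously small form only small, well-separated clusters with super-polynomially small ${\bf P}$-probability on the relevant scales. Concretely, this means proving an atypical quenched exit estimate (precisely the Proposition~\ref{atypical} referred to later in the paper) controlling the ${\bf P}$-probability that too many vertices in a given box have anomalously small quenched exit probability from their neighbourhoods; the moment condition $(E)_0$ feeds into this step by turning a finite negative moment of $p^\omega(0,e)$, together with the independence of $p^\omega(x,\cdot)$ across $x$, into the required concentration bound on bad-site patterns. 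Combining the renormalization scheme with this estimate closes the induction and yields $(T)_\gamma^\ell$ for every $\gamma\in(0,1)$, hence $(T')^\ell$; the delicate calibration of exponents and scales so that the atypical quenched cost is absorbed by the inductive gain is what pins down the threshold $M\ge 15d+5$ in the hypothesis.
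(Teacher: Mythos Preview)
The paper does not prove this statement at all: Theorem~\ref{PequivT} is explicitly quoted from~\cite{CR} (the section opens with ``we just recall two results previously obtained by Campos and Ram\'irez''), and is used here as a black box in the proof of Proposition~\ref{tailtau}. There is therefore no proof in the present paper to compare your proposal against.

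That said, your outline is a fair high-level summary of the argument actually given in~\cite{CR}: the easy direction is as you describe, and the hard direction $(P)_M^\ell\Rightarrow (T')^\ell$ does proceed by adapting the Sznitman/Berger--Drewitz--Ram\'irez renormalization scheme, with the replacement of uniform ellipticity by the moment condition $(E)_0$ handled through control of ``bad'' (non-elliptic) sites. One clarification: the atypical quenched exit estimate (Proposition~\ref{atypical} here, Proposition~4.1 in~\cite{CR}) is in~\cite{CR} a \emph{consequence} of having already established $(T')^\ell$, not an ingredient in deriving it; the ellipticity issue inside the renormalization is handled by a more direct bad-sites argument. But since the present paper offers no proof of its own, the relevant observation for this review is simply that you have sketched a result the authors are content to cite.
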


The following proposition allow us to compute atypical quenched exit estimates. Before stating the results, we need some definitions, similar to those introduced in \cite{CR,BRS}.

Let us consider a RWRE in an elliptic i.i.d. environment and $\ell\in S^{d-1}$, then $(P)_M^\ell$ implies that the walk has an asymptotic direction (see \cite{FS}), i.e. the following limit exists:
\begin{align}\label{def_v}
\hat{v}=\lim_{n\rightarrow\infty} \frac{X_n}{|X_n|_2}.
\end{align}

There exists $i_0\in [1,2d]$ such  that $\hat{v}\cdot e_{i_0}>0$. Assume also that $e_{i_0}$ is the vector of the canonical basis which is the nearest of $\hat{v}$, so that the angle between $v$ and $e_{i_0}$ is upper-bounded and we have
\[
\hat{v}\cdot e_{i_0}\ge\frac{1}{\sqrt{d}}.
\]

Moreover, for any $z\in\mathbb{Z}^d$, let $P(z)$ be the projection of $z$ on $v$ along the hyperplane $H=\{x\in\mathbb{R}^d:x\cdot e_{i_0}=0\}$, defined by
\[
P(z)=\left(\frac{z\cdot e_{i_0}}{\hat{v}\cdot e_{i_0}}\right)\hat{v},
\]
and let $Q(z)$ be the projection of $z$ on $H$ along $\hat{v}$ so that
\[
Q(z)=z-P(z).
\]

For $x\in\mathbb{Z}^d$, $\beta>0$ and $L>0$, we define the \emph{tilted boxes} with respect to the asymptotic direction $\hat{v}$ by:
\begin{align}\label{def_Btilted}
B_{\beta,L}(x)=\left\{y\in\mathbb{Z}^d: -L^\beta<(y-x)\cdot e_{i_0}<L, ||Q(y-x)||_\infty<L^\beta\right\},
\end{align}
and their front boundary by
\begin{align}\label{def_front}
\partial^+B_{\beta,L}(x)=\left\{y\in\partial B_{\beta,L}(x): (y-x)\cdot e_{i_0}=L\right\}.
\end{align}

\begin{remark}\label{elem_geom}
An elementary geometric computation (see Figure \ref{dessin_tiltedbox}) shows for $x_1,x_2\in \Z^d$ such that $(x_1-x_2)\cdot \hat v \ge0$ and $x_2\in B_{\beta,L}(x_1)$, we have $\abs{\abs{x_1-x_2}}_{\infty}\leq (1+\sqrt d)L^{\beta}$ since $\hat{v}\cdot e_{i_0}\ge\frac{1}{\sqrt{d}}$.
\end{remark}

\begin{figure}[h]
   \includegraphics{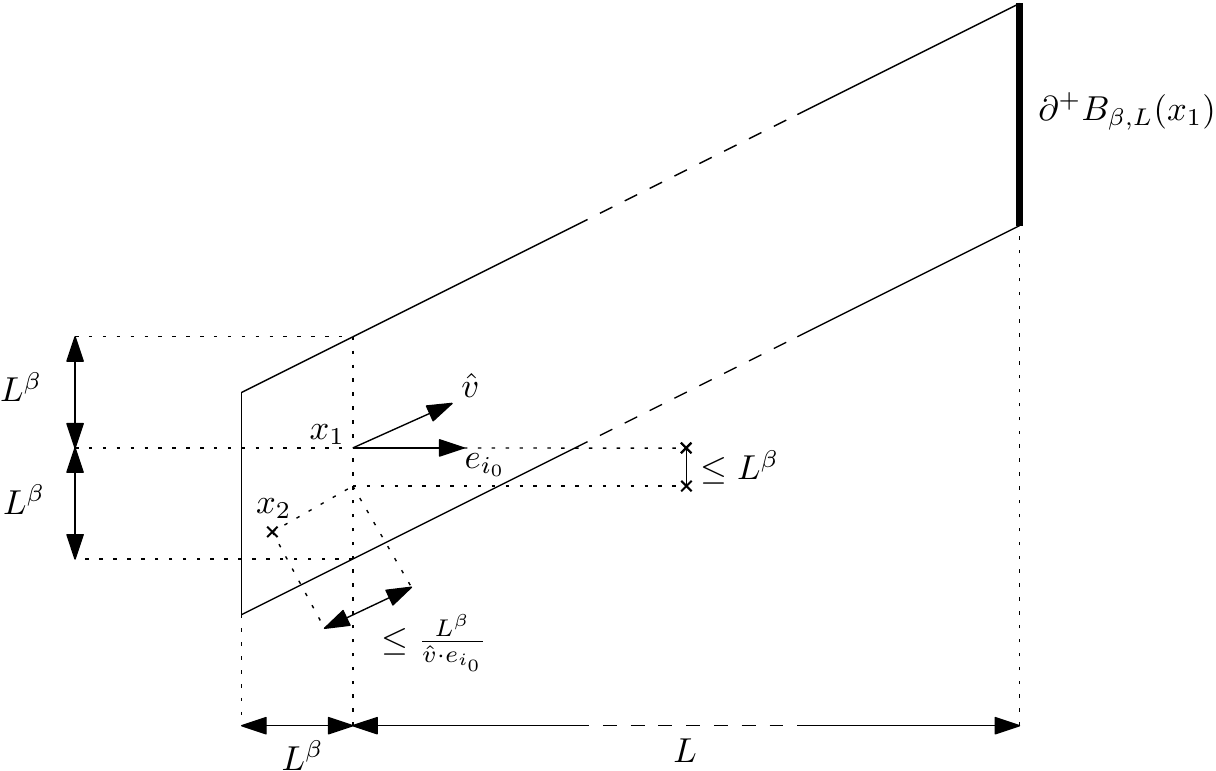}
      \caption{\label{dessin_tiltedbox} The tilted box $B_{\beta,L}(x_1)$.}

\end{figure}

Now, we have the following result from \cite{CR} that will be useful for us to show that it is extremely unlikely (super-exponential) that the environment typically  sends the walker against $\hat{v}$ for a long distance.
\begin{proposition}[Proposition 4.1 of Campos and Ram\'irez, \cite{CR}]\label{atypical}
Assume that $(E)_0$ holds and that $(P)_M^\ell$ is also satisfied for some $M\ge15d+5$. Let $\beta_0\in(1/2,1)$, $\beta\in(\frac{\beta_0+1}{2},1)$ and $\zeta\in(0,\beta_0)$. Then, for each $\gamma>0$, we have that
\[
\limsup_{L\rightarrow\infty} L^{-g(\beta_0,\beta,\zeta)}\log {\bf P}\left[P_0^\omega\left[X_{T_{B_{\beta,L}(0)}}\in\partial^+B_{\beta,L}(0)\right]\le e^{-\gamma L^\beta}\right]<0,
\]
where $g(\beta_0,\beta,\zeta)=\min\left\{\beta+\zeta, 3\beta-2+(d-1)(\beta-\beta_0)\right\}$.
\end{proposition}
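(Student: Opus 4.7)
The plan is to combine condition $(T')$---which holds under our assumptions by Theorem \ref{PequivT}---with a multiscale renormalization scheme adapted to the tilted boxes $B_{\beta,L}(x)$. I will show that the two candidate exponents in $g(\beta_0,\beta,\zeta)$ come from two different arguments: the first, $\beta+\zeta$, from a direct transfer of the annealed estimate, and the second, $3\beta-2+(d-1)(\beta-\beta_0)$, from an inductive renormalization.

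First, I would set up the annealed input. Under $(T')$, for any $\gamma'\in(0,1)$,
\[
\PR\bigl[X_{T_{B_{\beta,L}(0)}}\notin\partial^+B_{\beta,L}(0)\bigr]\le \exp(-cL^{\gamma'})
\]
for $L$ large, by decomposing the complement of $\partial^+B_{\beta,L}(0)$ into a back face and lateral faces, and applying $(T')$ in a family of directions close to $\hat v$ (permissible since $\hat v\cdot e_{i_0}\ge 1/\sqrt d$, cf.\ Remark \ref{elem_geom}). Writing $\mathcal{A}_\gamma(L)$ for the bad event in the proposition, Markov's inequality gives
\[
\mathbf{P}\bigl[\mathcal{A}_\gamma(L)\bigr]\le e^{\gamma L^\beta}\cdot\PR\bigl[X_{T_{B_{\beta,L}(0)}}\notin\partial^+B_{\beta,L}(0)\bigr].
\]
Choosing $\gamma'=\beta+\zeta$, which is allowed because $\zeta<\beta_0<1$, produces the first candidate exponent $\beta+\zeta$ in the minimum $g(\beta_0,\beta,\zeta)$.

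Second, I would boost this exponent via a renormalization argument. Fix a child scale $L$ and a parent scale $L'$ related by $L\approx L'^{\beta_0/\beta}$, so that the transverse width of a child, $L^\beta\approx L'^{\beta_0}$, is commensurate with the reduced scale $L'^{\beta_0}$ available inside the parent. A parent box $B_{\beta,L'}$ may then be covered by $O(L'/L)=O(L'^{1-\beta_0/\beta})$ longitudinal layers perpendicular to $\hat v$, each containing $O(L'^{(d-1)(\beta-\beta_0)/\beta_0})$ translates of $B_{\beta,L}$. On the event $\mathcal{A}_{\gamma/2}(L')$, the quenched exit probability from the parent is small, which, after conditioning on the walk entering successive layers, forces the simultaneous occurrence of bad sub-boxes along a longitudinal chain. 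Using the product structure of $\mathbf{P}$ on disjoint sub-boxes together with the inductive bound at scale $L$, a union bound over the possible chains yields a recursive inequality of the form
\[
\mathbf{P}\bigl[\mathcal{A}_{\gamma/2}(L')\bigr]\le \exp\bigl(-c'\,L'^{\,G(g)}\bigr),
\]
where the update map $G$ encodes both the longitudinal gain and the transverse overhead. Iterating and optimizing yields the second candidate exponent $3\beta-2+(d-1)(\beta-\beta_0)$. Taking the minimum of the two exponents gives exactly $g(\beta_0,\beta,\zeta)$.

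The main obstacle is the geometric and combinatorial step that makes the renormalization rigorous: one must verify that a bad parent box truly forces an independent chain of bad children (so that the product structure of $\mathbf{P}$ can be used), and one must control the deterioration of the constant $\gamma$ across scales so that it stays bounded below. The hypotheses $\beta_0\in(1/2,1)$, $\beta>(\beta_0+1)/2$ and $\zeta\in(0,\beta_0)$ are precisely what is needed for the induction to close: they ensure $1-\beta_0/\beta>0$, $3\beta-2>0$, and that the update $G$ has a stable positive fixed point matching the claimed exponent. Managing the constants through the iteration, and in particular preventing $\gamma$ from collapsing, is the delicate but standard part of the argument inherited from Sznitman's renormalization technology, here adapted to the anisotropy $(L,L^\beta)$ of the tilted boxes.
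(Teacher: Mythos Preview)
The paper does not prove this proposition; it is quoted verbatim from Campos--Ram\'irez \cite{CR} and used as a black box. So there is no ``paper's proof'' to compare against here, only the original argument in \cite{CR}. With that caveat, your sketch has a genuine gap.

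Your first step is incorrect. The Markov inequality you write, $\mathbf{P}[\mathcal{A}_\gamma(L)]\le e^{\gamma L^\beta}\,\PR[X_{T_{B_{\beta,L}(0)}}\notin\partial^+B_{\beta,L}(0)]$, is not the inequality one gets from Markov (the correct bound has no $e^{\gamma L^\beta}$ factor), and in any case the annealed input you feed into it is too weak. Under $(T')$, exiting $B_{\beta,L}(0)$ through the back or the sides requires a displacement of order $L^\beta$ against or transverse to $\hat v$, so the annealed probability is at best $\exp(-cL^{\beta\gamma'})$ for $\gamma'<1$; this yields an exponent arbitrarily close to $\beta$, never $\beta+\zeta>\beta$. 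Moreover, your justification ``$\gamma'=\beta+\zeta$ is allowed because $\zeta<\beta_0<1$'' is false: nothing prevents $\beta+\zeta>1$ (e.g.\ $\beta=0.9$, $\zeta=0.4$). The exponent $\beta+\zeta$ in \cite{CR} does not come from a direct Markov transfer of $(T')$; it comes from a separate argument that uses the ellipticity hypothesis $(E)_0$ to control the probability of locally bad columns of environment, and that is precisely why the parameter $\zeta$ is restricted to $(0,\beta_0)$.

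This points to the second, structural, problem: your sketch never invokes $(E)_0$, yet it is one of the two hypotheses of the proposition and is essential in \cite{CR}. In Sznitman's original uniformly elliptic setting the analogous role is played by the lower bound $p^\omega(x,e)\ge\kappa$, which lets one build deterministic escape paths; Campos--Ram\'irez replace this with quantitative moment bounds coming from $(E)_0$, and that substitution is exactly what drives the $\beta+\zeta$ term. Your renormalization paragraph is plausible in spirit but too schematic to pin down the exponent $3\beta-2+(d-1)(\beta-\beta_0)$: you would need to make precise why a bad parent forces a longitudinal chain of \emph{disjoint} bad children (so the product structure applies), and to track the constants through the scale change $L\mapsto L'$ to see that the claimed formula really is the fixed point. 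As written, the argument does not close.
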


In the next section, we will also need the following result that gives an equivalent criterion for $(T)_\gamma$, defined in \eqref{hyp_T}. For this purpose, define, for any $n\ge1$, the $n$-th regeneration radius by
\[
X^{*(n)}=\max_{\tau_{n-1}\le k \le\tau_n} |X_k-X_{\tau_{n-1}}|_1,
\]
where the $\tau_n$'s are defined in \eqref{regen_struct}.

The next result from Sznitman shows that if a walk verifies condition $(T)_\gamma^\ell$ then the trajectory of the walk goes fairly directly in the direction $\ell$, more precisely it is shown in~\cite{Szneff} that the space explored between two regeneration times has good tails. 
\begin{proposition}[Sznitman, \cite{Szneff}]\label{cor_sup}
Consider a RWRE in an elliptic i.i.d. environment. Let $\gamma\in(0,1)$ and $\ell\in S^{d-1}$. Assume that $(T)_\gamma^\ell$ holds. Then, there exists a constant $c$ such that, for every $L$ and $n\ge1$, we have that
\[
\PR\left[X^{*(n)}>L\right]\le Ce^{-cL^\gamma}.
\]
\end{proposition}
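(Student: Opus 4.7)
The plan is to reduce the statement to a stretched-exponential tail bound on the regeneration time $\tau_1$, which is Sznitman's central consequence of condition $(T)_\gamma^\ell$.

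First, I use the trivial deterministic bound $X^{*(n)} \leq \tau_n - \tau_{n-1}$ (with the convention $\tau_0 = 0$): since $|X_{k+1} - X_k|_1 = 1$ at every step, reaching $|\cdot|_1$-distance larger than $L$ from $X_{\tau_{n-1}}$ requires at least $L$ steps. So $\PR[X^{*(n)} > L] \leq \PR[\tau_n - \tau_{n-1} > L]$. By the renewal structure of Theorem~\ref{tau_indep}, $\tau_n - \tau_{n-1}$ for $n \geq 2$ is distributed under $\PR$ as $\tau_1$ under the conditioned law $\PR[\,\cdot \mid 0-\text{regen}]$; combined with $\PR[D=\infty] > 0$ from~\eqref{pos_D}, the conditioning costs only a multiplicative constant, and the problem reduces to establishing
\[
\PR[\tau_1 > L] \leq C e^{-c L^\gamma}.
\]

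The main content is this stretched-exponential tail on $\tau_1$ derived from $(T)_\gamma^\ell$, which is Sznitman's key estimate in~\cite{Szneff}. The heuristic is to decompose $\tau_1 = S_K = \sum_{k=1}^{K}(S_k - S_{k-1})$ along successive regeneration attempts. Directional transience provides a uniform positive lower bound on the conditional probability that a fresh attempt succeeds, so $K$ has a geometric tail. Each individual increment $S_k - S_{k-1}$, which measures the time for the walk to gain $\approx a$ levels in direction $\ell$, is controlled by slab-exit times of the form appearing in~\eqref{hyp_T}: if it exceeds $t$, then either the walk has advanced much slower than the typical linear rate in direction $\ell$, or it has performed a backward excursion of size $\gtrsim t$, the latter having probability at most $e^{-ct^\gamma}$ under $(T)_\gamma^\ell$. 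Summing a geometric number of such stretched-exponentially-tailed terms gives the desired bound on $\tau_1$.

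The delicate part is this last step: the raw hypothesis $(T)_\gamma^\ell$ controls only a single slab-exit event, whereas $\tau_1$ integrates the time across all the failed regeneration attempts before the first successful one, and these successive attempts are coupled through the shared environment. Sznitman handles this via a careful iteration combined with the cone-type estimates available from $(T)_\gamma^\ell$; we refer the reader to Proposition~3.1 in~\cite{Szneff} for the detailed argument.
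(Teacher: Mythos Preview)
Your reduction $X^{*(n)}\le \tau_n-\tau_{n-1}$ is correct, and the renewal argument reducing $n\ge2$ to $n=1$ is fine. The problem is the next step: the stretched-exponential tail $\PR[\tau_1>L]\le Ce^{-cL^\gamma}$ does \emph{not} follow from $(T)_\gamma^\ell$ and mere ellipticity. Sznitman's Proposition~3.1 in \cite{Szneff} is proved under uniform ellipticity, which is exactly the hypothesis you are silently importing and which the present proposition does not assume. In fact the whole point of the paper you are working in is that under ellipticity alone (even with $(T')$), $\tau_1$ can fail to have finite first moment---see Theorem~\ref{lb_tail} and the example of Proposition~\ref{ex_nul}. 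So the tail bound you need on $\tau_1$ is simply false at this level of generality, and your argument collapses.

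There is also a circularity issue specific to this paper: Proposition~\ref{cor_sup} is invoked at~\eqref{estim1} as an \emph{ingredient} in the proof of Proposition~\ref{tailtau}, which is this paper's tail bound on $\tau_1$. Deducing Proposition~\ref{cor_sup} from a tail bound on $\tau_1$ would therefore make the whole argument circular.

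The paper does not give its own proof here; it simply quotes Sznitman. Sznitman's route is the reverse of yours: $(T)_\gamma^\ell$ is a \emph{spatial} condition (stretched-exponential decay of wrong-side slab exits for directions $\ell'$ in a neighborhood of $\ell$), and one uses it directly to control the \emph{spatial} quantity $X^{*(n)}$. Concretely, by intersecting finitely many slab-exit events for directions $\ell'$ near $\ell$ one bounds the probability that the walk exits a large box through any face other than the one in direction $\ell$; combining this with the regeneration structure gives the tail on $X^{*(n)}$ without ever passing through a temporal bound on $\tau_1$. This is why the result survives without uniform ellipticity, whereas your approach cannot.
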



\subsection{Estimates on the tail of $\tau_1$: proof of Proposition \ref{tailtau}}\label{sect_tailtau}


Here, we prove Proposition \ref{tailtau}, which concludes the proof of Theorem \ref{main} and Theorem \ref{cor_TCL}.

\begin{proof}[Proof of Proposition \ref{tailtau}]
We want to give an estimate on the tail of $\tau_1$. For this purpose, we define, for $u>0$, the scale
\begin{align}\label{def_L}
L=L(u)=\left(c_1\eta \log u\right)^{\frac{1}{\beta}},
\end{align}
where $c_1\in(0,1)$, $\eta>0$ and $\beta\in(0,1)$ are constant which will be described later on. We also define the box 
\[
C_L=\left\{y\in\mathbb{Z}^d: -\frac{L}{2}<(y-x)\cdot e_{i_0}<\frac{L}{2}, ||Q(y-x)||_\infty< \frac{L}{2(\hat{v}\cdot e_{i_0})}\right\},
\]
using the definition \eqref{def_v} of $\hat{v}$ and recalling $\hat{v}\cdot e_{i_0}\ge 1/\sqrt{d}$.

Now, notice that
\begin{align}\label{ineg1}
\PR\left[\tau_1>u\right]\le \PR\left[\tau_1>u, T_{C_L}^{\text{ex}}\le\tau_1\right]+\PR\left[T_{C_L}^{\text{ex}}>u\right],
\end{align}
where $T_{C_L}^{\text{ex}}$ is the first time the walker is out of $C_L$.
Now, we want to give an upper bound for both of these quantities. For the first one, we will use condition $(T)_\gamma$ and we will use, for the second one, the atypical quenched exit estimates of Proposition \ref{atypical}.

\vspace{0.5cm}

{\it Upper bound for the first term of the right-hand side of \eqref{ineg1}}

\vspace{0.5cm}

First, we can give an estimate for the first quantity using Theorem \ref{PequivT} and Proposition \ref{cor_sup}, so that for any $\gamma\in(\beta,1)$, 
\begin{align}\label{estim1}
\PR\left[\tau_1>u, T_{C_L}^{\text{ex}}\le\tau_1\right]\le \PR\left[X^{*(1)}>\frac{L}{2\sqrt{d}}\right]\le Ce^{-cL^\gamma}.
\end{align}

\vspace{0.5cm}

{\it Upper bound for the second term of the right-hand side of \eqref{ineg1}}

\vspace{0.5cm}

Now, let us give an estimate for the second quantity of the right-hand side of \eqref{ineg1}. The general strategy is first to notice that, on the event $\left\{T_{C_L}^{\text{ex}}>u\right\}$, there exists some vertex $x\in C_L$ such that the probability starting from that point $x$ to come back to it before exiting $C_L$ is not too small. On the other hand, Corollary \ref{reachlog2} implies that there exists another point $y$, sufficiently far away from $x$, such that the probability to go from $x$ to $y$ without coming back to $x$ is great enough. These two facts together will imply that the quenched probability to  exit a tilted box (see \eqref{def_Btilted}) by the sides or the backside is large: this is an atypical quenched exit estimate, whose ${\bf P}$-probability is upper bounded by Proposition \ref{atypical}.\\

On the event $\left\{T_{C_L}^{\text{ex}}>u\right\}$, there is a.s.a random $x_1\in C_L$ such that
\begin{align}\label{def_Nx}
\mathcal{N}_{x_1}=\left|\left\{k:0\le k\le T_{C_L}^{\text{ex}},X_k=x_1\right\}\right|\ge \frac{u}{\left|C_L\right|},
\end{align}
which means that 
\[
\left\{T_{C_L}^{\text{ex}}>u\right\}\subset \left\{\exists x\in C_L: \mathcal{N}_x\geq \frac u{\abs{C_L}}\right\}.
\]

Note that, for any $x\in C_L$, if the walk starts from $x$, then $\mathcal{N}_x$ is a geometric random variable of parameter $P_x^\omega\left(T_{C_L}^{\text{ex}}<T_x^+\right)$, hence we get
\begin{align*}
\PR\left[\mathcal{N}_x\ge \frac{u}{|C_L|}, P_x^\omega\left[T_{C_L}^{\text{ex}}<T_x^+\right]\ge2\frac{\abs{C_L}}{u}L\right]\le e^{-L}.
\end{align*}

Using the two last equations, we see that
\begin{align}\label{estim2}
 \PR[T_{C_L}^{\text{ex}}>u] &\leq  \PR\left[T_{C_L}^{\text{ex}}>u, \inf_{x\in C_L} P_{x}^\omega\left[T_{C_L}^{\text{ex}}<T_{x}^+\right]<2\frac{\abs{C_L}}{u}L\right] \\ \nonumber
& \qquad + \PR\left[\exists x\in C_L: \mathcal{N}_x\geq \frac u{\abs{C_L}}, \inf_{x\in C_L} P_{x}^\omega\left[T_{C_L}^{\text{ex}}<T_{x}^+\right]\ge2\frac{\abs{C_L}}{u}L\right]\\\nonumber
& \leq    \PR\left[T_{C_L}^{\text{ex}}>u, \exists x_1 \in C_L: P_{x_1}^\omega\left[T_{C_L}^{\text{ex}}<T_{x_1}^+\right]<2\frac{\abs{C_L}}{u}L\right] \\ \nonumber
 & \qquad + \abs{C_L} \max_{x\in C_L} \PR\left[\mathcal{N}_x\ge \frac{u}{|C_L|}, P_x^\omega\left[T_{C_L}^{\text{ex}}<T_x^+\right]\ge2\frac{\abs{C_L}}{u}L\right] \\ \nonumber 
 & \le  \abs{C_L}e^{-L}+  \PR\left[T_{C_L}^{\text{ex}}>u, \exists x_1 \in C_L: P_{x_1}^\omega\left[T_{C_L}^{\text{ex}}<T_{x_1}^+\right]<2\frac{\abs{C_L}}{u}L\right] .
\end{align}

Let $\mathcal{A}$ be the event on which there exists $x_1\in C_L$ and a vertex $x_2\in \Z^d$ such that $\abs{x_2-x_1}_1= \lfloor \eta\log(u)\rfloor$ and
\begin{enumerate}
\item the following inequality holds
\[
P_{x_1}^\omega\left[T_{C_L}^{\text{ex}}<T_{x_1}^+\right]<2\frac{|C_L|}{u}L;
\]
\item   and the following inequality holds
\[
P^\omega_{x_1}\left[T_{x_2}<T^+_{x_1}\right]\ge u^{-\frac{\alpha+2\delta}{\alpha+\epsilon}}.
\]
\end{enumerate}

We can see that on $\mathcal{A}$ we have for $u$ large enough,
\begin{equation}\label{prop_A}
P^\omega_{x_1}\left[T_{x_2}<T^+_{x_1}\wedge T_{C_L}^{\text{ex}}\right]\ge \frac{1}{2}u^{-\frac{\alpha+2\delta}{\alpha+\epsilon}},
\end{equation}
which, in particular, implies that $x_2\in C_L$.

Besides, recall that $(K)_\alpha$ holds and that some $\epsilon>0$ is associated with that condition. Then, fixing $\delta\in(0,\epsilon/4)$, by Corollary \ref{reachlog2} there exists $\eta>0$ small enough such that, as soon as $u$ is large enough,
\begin{align}\label{estim3}
\PR\left[\exists x\in C_L: \max_{y:|y-x|_1=\lfloor \eta\log(u)\rfloor} P^\omega_x\left[T_y<T^+_x\right]< u^{-\frac{\alpha+2\delta}{\alpha+\epsilon}}\right]\le \frac{|C_L|}{u^{\alpha+\delta}}.
\end{align}

Using \eqref{estim2} and \eqref{estim3}, we get the upper bound:
\begin{align}\label{estim11}
 \PR[T_{C_L}^{\text{ex}}>u]  \le \abs{C_L}\left(u^{-(\alpha+\delta)}+e^{-L}\right) + \PR\left[\mathcal{A},T_{C_L}^{\text{ex}}>u\right].
\end{align}

Note also that, as soon as $u$ is large enough and using \eqref{def_L},
\begin{align}\label{dist1}
|x_2-x_1|_1=\lfloor \eta\log(u)\rfloor \ge \frac{\eta}{2} \log(u)=\frac{1}{2c_1}L^{\beta},
\end{align}
recalling that $c_1\in(0,1)$ is a constant that we will fix later on.\\

Now, let us consider the tilted box $B_{\beta,L}(x_1)$ defined in \eqref{def_Btilted} with $c_1<(4d^2)^{-1}$ and distinguish two cases on the event $\mathcal{A}$. 

\vspace{0.5cm}

{\it First case: on the event $\mathcal{A}\cap\left\{x_2\in B_{\beta,L}(x_1)\right\}$}

\vspace{0.5cm}

First, if $x_2\in B_{\beta,L}(x_1)$, we will prove that $x_1\notin B_{\beta,L}(x_2)$. 

Let us assume by contradiction that $x_1\in B_{\beta,L}(x_2)$, we can see that by Remark~\ref{elem_geom} (which can always be applied since $(x_1-x_2)\cdot \hat v \ge0$ or $(x_2-x_1)\cdot \hat v \ge0$, and $x_1$ and $x_2$ play symmetric roles for this computation)
\[
\abs{\abs{x_1-x_2}}_1\leq d(1+\sqrt d)L^{\beta}.
\]

The previous equation contradicts~\eqref{dist1} since we just chose $c_1<(4d^2)^{-1}$. Hence $x_1\notin B_{\beta,L}(x_2)$.

Moreover, one has that
\begin{align*}
P_{x_1}^\omega\left[T_{C_L}^{\text{ex}}<T^+_{x_1}\right]&\ge P_{x_1}^\omega\left[T_{x_2}<T_{C_L}^{\text{ex}}\wedge T^+_{x_1}\right] \times P_{x_2}^\omega\left[T_{C_L}^{\text{ex}}<T_{x_1}\right]\\
&\ge \frac{1}{2u^{\frac{\alpha+2\delta}{\alpha+\epsilon}}}\times P_{x_2}^\omega\left[T_{C_L}^{\text{ex}}<T_{x_1}\right],
\end{align*}
where we used the fact that we are on $\mathcal{A}$ (see~\eqref{prop_A}). Furthermore, the definition of $\mathcal{A}$, then implies
\[
P_{x_2}^\omega\left[T_{C_L}^{\text{ex}}<T_{x_1}\right]\le4 \frac{|C_L|\times L}{u^{\frac{\epsilon}{2(\alpha+\epsilon)}}}.
\]

\begin{figure}[h]
   \includegraphics{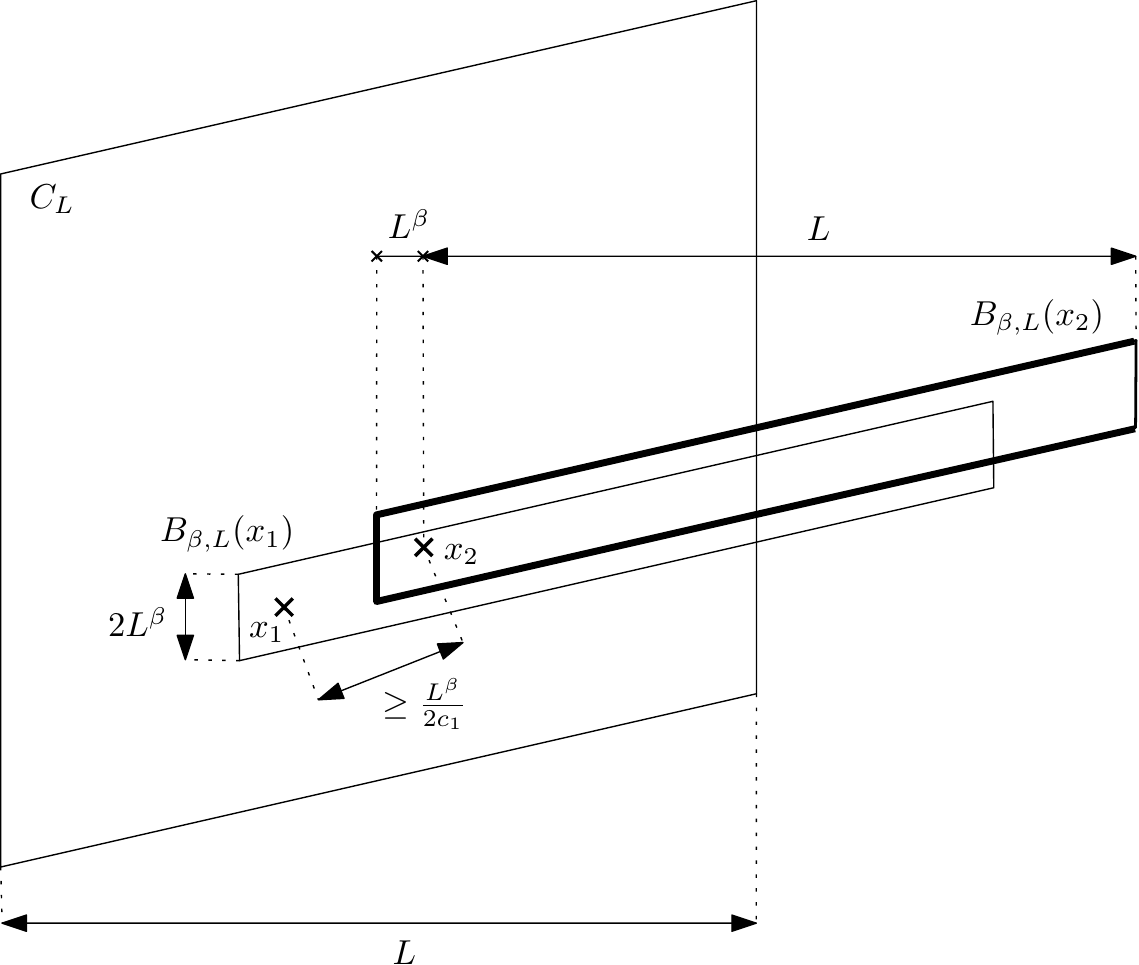}
      \caption{\label{dessin_x11} From $x_2$, the walker has to exit $B_{\beta,L}(x_2)$ before visiting $x_1$.}

\end{figure}

This last inequality implies that the probability, starting from $x_2$, to exit $C_L$ before visiting $x_1$ is very small. This fact implies that the probability to exit $B_{\beta,L}(x_2)$ through its front boundary is very small as well (see Figure~\ref{dessin_x11}).
\begin{align}
P_{x_2}^\omega\left[X_{T_{B_{\beta,L}(x_2)}}\in\partial^+B_{\beta,L}(x_2)\right]&\le P_{x_2}^\omega\left[T_{C_L}^{\text{ex}}<T_{x_1}\right]\le4 \frac{|C_L|\times L}{u^{\frac{\epsilon}{2(\alpha+\epsilon)}}}\label{estim41}.
\end{align}

\vspace{0.5cm}

{\it Second case: on the event $\mathcal{A}\cap\left\{x_2\notin B_{\beta,L}(x_1)\right\}$}

\vspace{0.5cm}

If $x_2$ does not belong to $B_{\beta,L}(x_1)$, then it is obvious that the walker cannot visit $x_2$ without exiting the tilted box (see Figure~\ref{dessin_x12}). This means that 
\[
 P_{x_1}^\omega\left[T_{x_2}<T_{C_L}^{\text{ex}}\wedge T_{x_1}^+\right] \leq P_{x_1}^\omega\left[T_{B_{\beta,L}(x_1)}<T_{x_1}^+\right].
\]

The walker cannot visit too many times $x_1$ before exiting $B_{\beta,L}(x_1)$, indeed the walker goes relatively easily from $x_1$ to $x_2$ and  $x_2$ can only be reached by exiting  $B_{\beta,L}(x_1)$.

\begin{figure}[h]
   \includegraphics{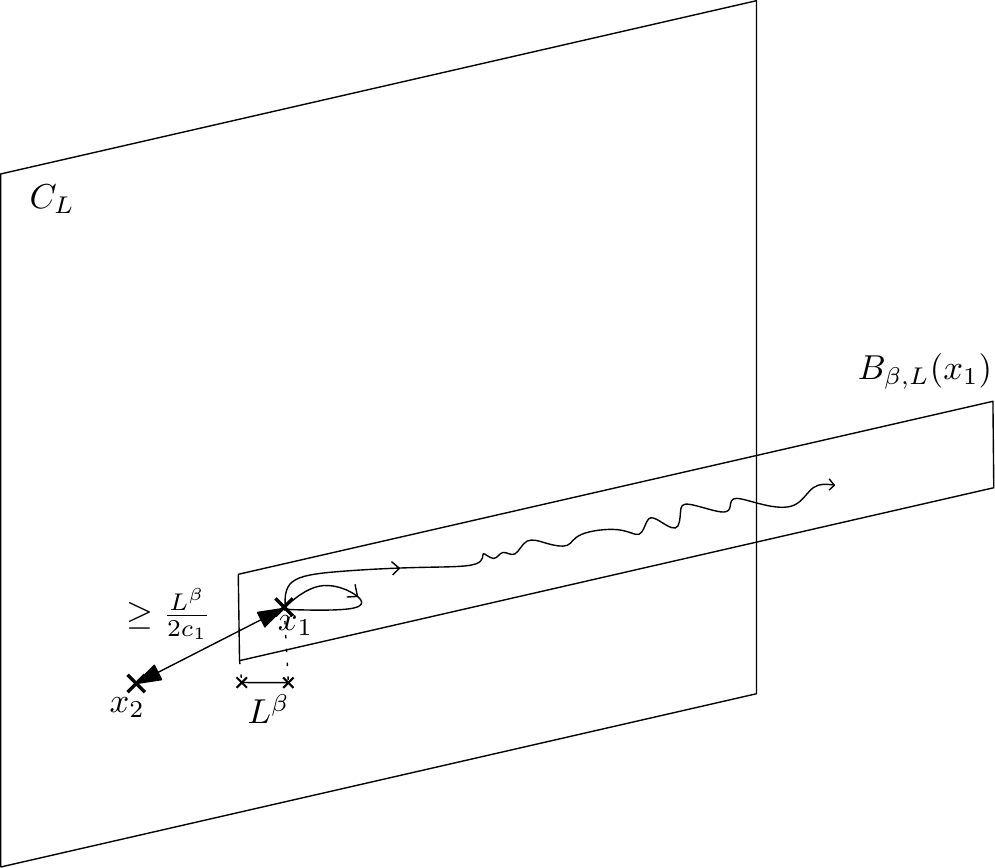}
      \caption{\label{dessin_x12} From $x_1$, the walker has to exit $B_{\beta,L}(x_1)$ before visiting $x_2$.}

\end{figure}

More precisely,  recalling the definition \eqref{def_Nx} of $\mathcal{N}_{x_1}$ and using the previous equation in the third line, we see that on $\mathcal{A}$
\begin{align}
P_{x_1}^\omega\left[X_{T_{B_{\beta,L}(x_1)}}\in\partial^+B_{\beta,L}(x_1)\right]&\le \sum_{n=0}^\infty P_{x_1}^\omega\left[X_{T_{B_{\beta,L}(x_1)}}\in\partial^+B_{\beta,L}(x_1),\mathcal{N}_{x_1}=n+1\right]\nonumber\\
&\le P_{x_1}^\omega\left[ T_{C_L}^{\text{ex}}<T^+_{x_1}\right]\times \sum_{n=0}^\infty \left[P_{x_1}^\omega\left[T_{x_1}^+<T_{B_{\beta,L}(x_1)}\right]\right]^n\nonumber\\
\le P_{x_1}^\omega&\left[ T_{C_L}^{\text{ex}}<T^+_{x_1}\right]\times \sum_{n=0}^\infty \left(1-P_{x_1}^\omega\left[T_{x_2}<T_{C_L}^{\text{ex}}\wedge T_{x_1}^+\right]\right)^n\nonumber\\
&\le \frac{P_{x_1}^\omega\left[ T_{C_L}^{\text{ex}}<T^+_{x_1}\right]}{P_{x_1}^\omega\left[T_{x_2}<T_{C_L}^{\text{ex}}\wedge T_{x_1}^+\right]}\le 4\frac{|C_L|\times L}{u^{\frac{\epsilon}{2(\alpha+\epsilon)}}}.\label{estim42}
\end{align}

\vspace{0.5cm}

{\it Atypical quenched exit estimates on $\mathcal{A}$}

\vspace{0.5cm}

By \eqref{estim41} and \eqref{estim42}, we have that, on $\mathcal{A}$, there a.s.~exists $x\in C_L$ such that, for some positive constant $c_3$, we have that, as soon as $L$ is large enough,
\[
P_{x}^\omega\left[X_{T_{B_{\beta,L}(x)}}\in\partial^+B_{\beta,L}(x)\right]\le e^{-c_3 L^{\beta}}.
\]

Let us stress that the constants $c_1$ and $\eta$ have been fixed, so we will not emphasize them in the following computations. By using Proposition \ref{atypical}, we obtain a function $g(\beta_0,\beta,\zeta)$ such that 
\begin{align*}
\PR\Bigl[P_{x}^\omega\left[X_{T_{B_{\beta,L}(x)}}\in\partial^+B_{\beta,L}(x)\right]\le e^{-c_3 L^{\beta}}\Bigr] & \leq C\exp\left(-c L^{g(\beta_0,\beta,\zeta)}\right) \\
& \leq C\exp\left(-c (\log u)^{\frac{g(\beta_0,\beta,\zeta)}{\beta}}\right),
\end{align*}
where we recall that $L$ was defined at \eqref{def_L}.

An elementary computation shows that $g(\beta_0,\beta,\zeta)>\beta$ for $\beta$ close to 1, $\beta_0$ close to $1/2$ and $\zeta>0$. Thus, for such a choice of constants, there exists $\epsilon'>0$ such that for $u$ is large enough,
\begin{align}\label{estim4}
\PR[\mathcal{A}]\le e^{-c_4 (\log u)^{1+\epsilon'}}.
\end{align}

\vspace{0.5cm}

{\it Conclusion}

\vspace{0.5cm}

The inequality \eqref{ineg1} and the estimates \eqref{estim1}, \eqref{estim11} and \eqref{estim4}, we conclude that, as soon as $u$ is large enough,
\[
\PR[\tau_1>u]\le \frac{1}{u^{\alpha+\frac{\delta}{2}}},
\]
for some $\delta>0$.
\end{proof}

\begin{remark}
Notice that in this last proof, the only limiting factor that prevents us to obtain moments of any order on $\tau_1$ is \eqref{estim3} which describes the probability to reach a certain point at distance of order $\log n$.\\
\end{remark}


\section{Zero-speed regime}
\label{sect_zero_speed}

In this section, let us prove Theorem~\ref{lb_tail}. To accomplish this, we need to identify where trapping comes from.

For this, we say that a vertex $x\in\Z^d$ is $\kappa$-elliptic if for all $e\in U$, we have $p^{\omega}(x,e) \in (\kappa,1-\kappa)$. By \eqref{hyp_E}~it it clear that there exists $\kappa_0>0$ such that ${\bf P}[x\text{ is $\kappa_0$-elliptic}]>1/2$ for any $x\in \Z^d$. To be concise, we will say that a vertex is regular if it is $\kappa_0$-elliptic.

Let us introduce the sets
 \[
 \mathcal{A}= \{z\notin \mathfrak{H}_{de_1}, \text{such that for some $y\in\mathfrak{H}_{de_1}$, }\abs{\abs{z-y}}_{\infty}=1\}.
 \]
 and  
 \[
 \mathcal{B}=\{0,\ e_1,\ldots,\ (d-1)e_1\}.
 \]
 
It is plain to see that
\begin{enumerate}
\item  $\mathcal{A}\cup\mathcal{B}$ is connected, 
 \item $\mathcal{A}$ contains $\partial \mathfrak{H}_{de_1}$,
 \item $\mathcal{A}\subset \mathcal{H}^+(0)$ (defined at~\eqref{def_HH}). This can be seen easily from~(\ref{e1ell}).
\end{enumerate}

Let us introduce the event
\begin{equation}\label{def_R}
\mathcal{R}=\{\text{any $x\in \mathcal{A}\cup \mathcal{B}$ is regular}\},
\end{equation}
it is clear that ${\bf P}[\mathcal{R}]>0$.

The general idea is to investigate the probability of events such that some unit hypercube is surrounded by regular points, but transition probabilities inside the hypercube are not conditioned. Thus, on such an event, the walker moves easily around the hypercube but could get trapped in it, as the exit time of the hypercube is not conditioned and is independent of the environment outside it (see Figure \ref{dessin_0}).
\begin{figure}[h]
   \includegraphics{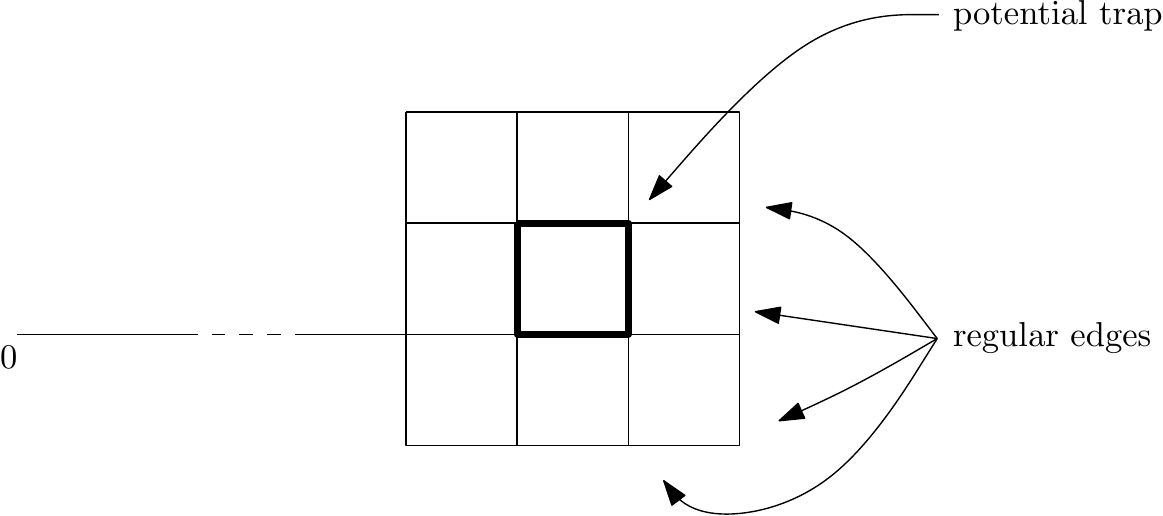}
      \caption{\label{dessin_0} The environment $\mathcal{R}$ on which we construct trapping.}

\end{figure}

The following lemma shows that tail estimates on the exit time of hypercubes can be used to find lower bounds on regeneration times.
\begin{lemma}\label{tail_fromtrap}
Let us consider a RWRE in an elliptic i.i.d.~environment. We have, for some constant $c>0$,
\[
\PR[\tau_1\geq n\mid 0-\text{regen}] \geq c \max_{x\in \mathfrak{H}} \PR_x[T^{\text{ex}}_{\mathfrak{H}}\geq n].
\]
\end{lemma}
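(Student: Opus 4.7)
My plan is to exhibit a single ``canalization'' event on which both $\tau_1\geq n$ and $D=\infty$ hold, with annealed probability matching $\max_{x\in\mathfrak{H}}\PR_x[T^{\text{ex}}_{\mathfrak{H}}\geq n]$ up to a positive constant. Since $\PR[\,\cdot\mid 0\text{-regen}\,]=\PR[\,\cdot\mid D=\infty\,]$ and $\PR[D=\infty]>0$ by directional transience, it suffices to find $c=c(d)>0$ such that $\PR[\tau_1\geq n,D=\infty]\geq c\max_{x\in\mathfrak{H}}\PR_x[T^{\text{ex}}_{\mathfrak{H}}\geq n]$. Fix $x\in\mathfrak{H}$ and set $y=x+de_1\in\mathfrak{H}_{de_1}$; translation invariance of ${\bf P}$ then identifies $\PR_y[T^{\text{ex}}_{\mathfrak{H}_{de_1}}\geq n]$ with $\PR_x[T^{\text{ex}}_{\mathfrak{H}}\geq n]$.

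Since $\mathcal{A}\cup\mathcal{B}$ is connected and $y$ has a neighbour $z\in\partial\mathfrak{H}_{de_1}\subset\mathcal{A}$, I would fix once and for all a deterministic nearest-neighbour path $\pi=(0,\pi_1,\ldots,\pi_{k-1}=z,\pi_k=y)$ of length $k\leq K(d)$ whose intermediate vertices all lie in $\mathcal{A}\cup\mathcal{B}$, and consider
\[
E=\mathcal{R}\cap\{X_j=\pi_j,\,0\leq j\leq k\}\cap\bigl\{T^{\text{ex}}_{\mathfrak{H}_{de_1}}\circ\theta_k\geq n\bigr\}\cap \mathcal{G},
\]
where $\mathcal{G}$ asks that after the walker first exits $\mathfrak{H}_{de_1}$ it never enters $\{z:z\cdot\ell<0\}$. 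Each transition along $\pi$ uses a $\kappa_0$-regular vertex on $\mathcal{R}$, so $P^\omega_0[X_j=\pi_j,0\leq j\leq k]\geq\kappa_0^{K(d)}$. The strong Markov property at time $k$ then gives
\[
P^\omega_0\bigl[X_j=\pi_j,\,0\leq j\leq k,\,T^{\text{ex}}_{\mathfrak{H}_{de_1}}\circ\theta_k\geq n\bigr]\geq\kappa_0^{K(d)}P^\omega_y\bigl[T^{\text{ex}}_{\mathfrak{H}_{de_1}}\geq n\bigr],
\]
and the crucial independence---$\mathbf{1}_{\mathcal{R}}$ and $P^\omega_y[T^{\text{ex}}_{\mathfrak{H}_{de_1}}\geq n]$ are measurable with respect to the environment on the disjoint vertex sets $\mathcal{A}\cup\mathcal{B}$ and $\mathfrak{H}_{de_1}$---yields
\[
{\bf E}\bigl[\mathbf{1}_{\mathcal{R}}\,P^\omega_y[T^{\text{ex}}_{\mathfrak{H}_{de_1}}\geq n]\bigr]={\bf P}[\mathcal{R}]\,\PR_x\bigl[T^{\text{ex}}_{\mathfrak{H}}\geq n\bigr].
\]

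On $E$ the trajectory up to the exit from $\mathfrak{H}_{de_1}$ is confined to $\mathcal{B}\cup\mathfrak{H}_{de_1}\subset\{z:z\cdot\ell\geq 0\}$ (the inclusion follows from $e_1\cdot\ell\geq 1/\sqrt{d}$ and the Cauchy--Schwarz bound $\sum_i|e_i\cdot\ell|\leq\sqrt{d}$), and $\mathcal{G}$ extends this to all times, so $D=\infty$. The fact that the walker is stuck for at least $n$ steps in a bounded region whose $\ell$-oscillation crosses every level it has previously attained prevents any regeneration from being completed by time $n$, so $\tau_1\geq n$. For the annealed probability of $\mathcal{G}$, translation invariance and monotonicity give, for any $v$ with $v\cdot\ell\geq0$,
\[
\PR_v\bigl[X_n\cdot\ell\geq 0\text{ for all }n\bigr]\geq\PR[D=\infty]>0,
\]
which can be converted into a lower bound on the quenched escape probability from $\partial\mathfrak{H}_{de_1}$ by an annealed Markov argument at the hypercube-exit time. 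Combining the three estimates yields $\PR[E]\geq c(d)\PR_x[T^{\text{ex}}_{\mathfrak{H}}\geq n]$, and the lemma follows after taking the maximum over $x$ and dividing by $\PR[D=\infty]$.

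The main obstacle is the very last step: a priori the post-exit walk may revisit $\mathcal{A}\cup\mathcal{B}\cup\mathfrak{H}_{de_1}$, so the annealed estimate for $\mathcal{G}$ does not factor cleanly out of the quenched trap probability; and for some configurations of $\ell$ and $d$ the walker can reach level $a$ inside the hypercube, so care is needed to ensure that no $R_k$ becomes infinite before time $n$. I would handle the first point by enlarging $\mathcal{R}$ by a finite, dimension-dependent corridor of additional $\kappa_0$-regular vertices that push the walker deterministically across some fixed level $\{z:z\cdot\ell=C_0\}$ with $C_0$ large enough that $\PR_w[X_n\cdot\ell\geq 0\ \forall n]\geq\tfrac{1}{2}$ for $w\cdot\ell\geq C_0$; this preserves ${\bf P}[\mathcal{R}]>0$ and disjointness from $\mathfrak{H}_{de_1}$, and reduces $\mathcal{G}$ to an annealed escape independent of the trap environment. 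The second point is handled by restricting the trap event to trajectories whose $\ell$-fluctuations visit the bottom corner $de_1$ of $\mathfrak{H}_{de_1}$, which comes at a bounded cost in probability and forces $R_k<\infty$ for every $S_k$ occurring inside the trap.
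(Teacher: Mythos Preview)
Your skeleton is the same as the paper's, and you have correctly located the two genuine difficulties: factoring the post-trap escape event, and guaranteeing that no regeneration occurs during the trap. However, your proposed fixes for both points are shakier than the paper's, and the second one does not actually work as written.

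\textbf{On the escape event.} Your corridor idea can be made to work, but the right target is not ``some level $C_0$ with $\PR_w[X_n\cdot\ell\ge 0\ \forall n]\ge 1/2$''. What you need is a deterministic launch point whose $\ell$-level strictly exceeds that of \emph{every} vertex in $\mathcal{A}\cup\mathcal{B}\cup\mathfrak{H}_{de_1}$, so that the event $\{D\circ\theta=\infty\}$ from there depends only on the environment in a half-space disjoint from all the sites already used. This is exactly what the paper does: after exiting the trap the walk is routed inside the regular shell $\mathcal{A}\cup\mathcal{B}$ to the far corner $de_1+2\sum_i e_i\in\mathcal{A}$, then one step to $(d+1)e_1+2\sum_i e_i$, whose $\ell$-level dominates all of $\mathcal{A}\cup\mathcal{B}\cup\mathfrak{H}_{de_1}$. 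From there one imposes $D=\infty$, which contributes the independent factor $\PR[D=\infty]>0$. This uses only the regular sites already declared in $\mathcal{R}$, so there is no need to enlarge $\mathcal{R}$ at all.

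\textbf{On $\tau_1\ge n$.} Your sentence ``the walker is stuck\dots in a region whose $\ell$-oscillation crosses every level it has previously attained'' is not justified: nothing forces the trapped trajectory to visit low $\ell$-levels, so a candidate regeneration $S_k$ could occur inside $\mathfrak{H}_{de_1}$ and have $R_k=\infty$. Your fix---restricting the trap event to trajectories that visit $de_1$---has two problems. First, it is not clear this costs only a bounded factor; the quenched event $\{T^{\text{ex}}_{\mathfrak{H}_{de_1}}\ge n\}$ may be carried by trajectories that never reach $de_1$. Second, even if the walker visits $de_1$, a later $S_k$ inside the trap (at a level above $de_1\cdot\ell$) can still have $R_k=\infty$ if the subsequent trajectory stays high. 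The paper's remedy is simpler and costs exactly $\kappa_0^{C(d)}$: \emph{after} the trap exit (the walk is then in $\mathcal{A}$), route the walk through $\mathcal{A}\cup\mathcal{B}$ back to the vertex $e_1$, whose level $e_1\cdot\ell\le 1<a$ is strictly below any possible $X_{S_k}\cdot\ell$. This single post-trap visit to $e_1$ forces $R_k<\infty$ for every $S_k$ that occurred before it, hence $\tau_1$ is pushed past the trap exit time and so $\tau_1\ge n$. Only then does the walk proceed to the launch point described above.

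In short, both of your obstacles are resolved by the same device: a second deterministic walk through the regular shell $\mathcal{A}\cup\mathcal{B}$ \emph{after} the trap, first down to $e_1$ (to kill regenerations) and then up to a point beyond the whole configuration (to decouple the final $D=\infty$).
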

\begin{proof}
We fix $x_0\in \mathfrak{H}$ which realizes the maximum $\max_{x\in \mathfrak{H}} \PR_x[T^{\text{ex}}_{\mathfrak{H}}\geq n]$.

Let us now describe an event which slows the walk down and which can happen on $\{0-\text{regen}\}$. On $\mathcal{R}$, consider the following chain of events
\begin{enumerate} 
\item $X_1=e_1, X_2=2e_1,\ldots, X_d=(d-1)e_1$,
\item from there $X_n$ takes the shortest path inside $\mathcal{A}\cup\mathcal{B}$ to $de_1+x_0$, this can be done in at most $C(d)$ steps (where $C(d)$ depends only on $d$).
\item Then, we stay on $\mathfrak{H}_{de_1}$ for a time $T^{\text{ex}}_{\mathfrak{H}_{de_1}}\circ \theta_{T_{de_1+x_0}} \geq n$, where $\theta_{\cdot}$ is a shift operator,
\item after exiting $\mathfrak{H}_{de_1}$, the walk has to be in $\mathcal{A}$. From there, the walk takes the shortest path to $e_1$ inside $\mathcal{A}\cup\mathcal{B}$ and then the shortest path from $e_1$ to $de_1+2\sum_{i=1}^d e_i$ (which has never been visited) inside $\mathcal{A}\cup\mathcal{B}$. All this can be accomplished in less than $C(d)$ steps (where $C(d)$ depends only on $d$). This step ensures that $\tau_1$ occurs after $T^{\text{ex}}_{\mathfrak{H}_{de_1}}\circ \theta_{T_{de_1+x_0}}$.
\item Finally, the walk makes one step to $(d+1)e_1+2\sum_{i=1}^d e_i$,  and from there never backtracks, creating a new regeneration time. 
\end{enumerate}

Let us denote $F_n$ the chain of events described above (in (1), (2), (3), (4), (5)). We can see that on $F_n$, we have
\begin{enumerate}
\item $D=\infty$,
\item $\tau_1\ge T^{\text{ex}}_{\mathfrak{H}_{de_1}}\circ \theta_{T_{de_1+x_0}} +T_{de_1+x_0}\geq n$.
\end{enumerate}

This implies that 
\begin{align}\label{tau1_trap}
\PR[\tau_1\geq n \mid 0-\text{regen}] & \geq c\PR[\tau_1\geq n,\ 0-\text{regen}] \\ \nonumber
                                                                  & \geq c{\bf E}[\1{\mathcal{R}} P^{\omega}_0[F_n]].
\end{align}

Now, we want to give a lower bound of $P^{\omega}[F_n]$ on the event $\mathcal{R}$. This can be done by applying several times the strong Markov property at the times $d-1$, $T_{de_1+x_0}$, $T^{\text{ex}}_{\mathfrak{H}_{de_1}}\circ \theta_{T_{de_1+x_0}}+T_{de_1+x_0}$, $T_{de_1+2\sum_{i=1}^d e_i}$. This leads us to lower-bound the five terms described above.
\begin{enumerate}
\item On $\mathcal{R}$, we have that 
\[
P_0^{\omega}[X_1=e_1, X_2=2e_1,\ldots, X_{d-1}=(d-1)e_1] \geq \kappa_0^{d-1}.
\]
\item Let us denote $C_1$ the event that the walk takes the shortest path from $(d-1)e_1$ to $x_0+de_1$ inside $\mathcal{A}\cup\mathcal{B}$. On $\mathcal{R}$, we have that 
\[
P_{(d-1)e_1}^{\omega}[C_1] \geq \kappa_0^{C(d)}.
\]
\item After applying the Markov property, the third term becomes $P_{de_1+x_0}^{\omega}[T^{\text{ex}}_{\mathfrak{H}_{de_1}}\geq n]$.
\item Let us denote $C_2$ the event that the walk takes the shortest path to $e_1$ inside $\mathcal{A}\cup\mathcal{B}$ and then the shortest path from $e_1$ to $de_1+2\sum_{i=1}^d e_i$ inside $\mathcal{A}\cup\mathcal{B}$. It is easy to see, on $\mathcal{R}$, that
\[
\min_{y\in \partial \mathfrak{H}} P_y^{\omega}[C_2]\geq \kappa_0^{C(d)}.
\]
\item Finally, we see that, on $\mathcal{R}$,
\[
P_{ de_1+2\sum_{i=1}^d e_i}^{\omega}[X_1=(d+1)e_1+2\sum_{i=1}^d e_i,\ D\circ \theta_{1}=\infty] \geq \kappa_0 P_{(d+1)e_1+2\sum_{i=1}^d e_i}^{\omega}[D=\infty].
\]
\end{enumerate}

As mentioned, those estimates combined with the strong Markov property imply that on $\mathcal{R}$, we have
\[
P_0^{\omega}[F_n] \geq c(\kappa_0,d) P_{de_1+x_0}^{\omega}[T^{\text{ex}}_{\mathfrak{H}_{de_1}}\geq n] P_{(d+1)e_1+2\sum_{i=1}^d e_i}^{\omega}[D=\infty].
\]

This estimate combined with~(\ref{tau1_trap}) implies that
\[
\PR[\tau_1\geq n \mid 0-\text{regen}] \geq c {\bf E}\Bigl[\1{\mathcal{R}}P_{de_1+x_0}^{\omega}[T^{\text{ex}}_{\mathfrak{H}_{de_1}}\geq n] P_{(d+1)e_1+2\sum_{i=1}^d e_i}^{\omega}[D=\infty]\Bigr].
\]

Note that by independence of the transition probabilities, the random variables $P_{de_1+x_0}^{\omega}[T^{\text{ex}}_{\mathfrak{H}_{de_1}}\geq n]$, $\1{\mathcal{R}}$ and $P_{(d+1)e_1+2\sum_{i=1}^d e_i}^{\omega}[D=\infty]$ are all ${\bf P}$-independent. This means that
\begin{align*}
\PR[\tau_1\geq n \mid 0-\text{regen}] & \geq c {\bf P}[\mathcal{R}] {\bf E}\Bigl[P_{de_1+x_0}^{\omega}[T^{\text{ex}}_{\mathfrak{H}_{de_1}}\geq n]\Bigr] {\bf E}\Bigl[P_{(d+1)e_1+2\sum_{i=1}^d e_i}^{\omega}[D=\infty]\Bigr] \\
 &\geq c \PR_{x_0}[T^{\text{ex}}_{\mathfrak{H}}\geq n]  \PR[D=\infty],
 \end{align*}
 where we used translation invariance and the fact that ${\bf P}[\mathcal{R}]>0$ in the last line. The result follows from the definition of $x_0$ and~(\ref{pos_D}).

\end{proof}

Let us now prove Theorem~\ref{lb_tail}.
\begin{proof}[Proof of Theorem \ref{lb_tail}]
It is easy to see from Lemma~\ref{tail_fromtrap} that a RWRE in an elliptic i.i.d.~environment verifies
  \[
\ES[\tau_1^{\alpha}\mid 0-\text{regen}] \geq c \max_{x\in \mathfrak{H}} \ES_x\Bigl[\Bigl(T^{\text{ex}}_{\mathfrak{H}}\Bigr)^\alpha\Bigr].
\]

This means that for a walk verifying $(E)$ and $(H)_{\alpha}$ for some $\alpha>0$ we have
\begin{equation}\label{proof_rem}
\ES[\tau_1^{\alpha}\mid 0-\text{regen}]=\infty.
\end{equation}

Theorem~\ref{lb_tail} follows from the previous equation and Theorem~\ref{speed_regen}.
\end{proof}


\noindent{\bf Acknowledgments.}

We would like to thank Alejandro Ram\'irez for useful discussions.

The authors are also grateful to the Universit\'e de Toulouse, which they were both affiliated to at the time when this work was done.

\end{document}